\definecolor{sepia}{rgb}   {0.8,   0.5,   0.} 
\newtheorem{theorem}{Theorem}[section]
\newtheorem{proposition}[theorem]{Proposition}
\newtheorem{lemma}[theorem]{Lemma}
\newtheorem{corollary}[theorem]{Corollary}
\newtheorem{Remark}[theorem]{Remark}
\newtheorem{definition}[theorem]{Definition}
\newtheorem{Assumption}{Assumption}
\numberwithin{equation}{section}
\newenvironment{proof}{\emph{Proof.} }{\mbox{$$}\hfill$\Box$}
\newenvironment{remark}
          {\begin{Remark} \parindent=0pt\rm }
          {\hskip 12bp$\square$\par\addvspace{8bp}\end{Remark}}
\newenvironment{assumption}
          {\begin{Assumption} \parindent=0pt\rm 
           \abovedisplayskip = 0.6 \abovedisplayskip
           \belowdisplayskip=\abovedisplayskip}
          {\end{Assumption}}
\newcommand{\ass}[2]{%
\par\medskip\par
\noindent\fbox{\begin{minipage}{0.985\textwidth}
      \begin{assumption}\label{#1}{#2}
      \end{assumption}%
    \end{minipage}}\par\bigskip\par%
}
\newcommand  {\N}{{\mathbb N}}
\newcommand  {\R}{{\mathbb R}}
\newcommand{\Div}{\operatorname{\mathsf{div}}}
\newcommand{\grad}{\operatorname{\boldsymbol{\mathsf{grad}}}}
\newcommand{\curl}{\operatorname{\boldsymbol{\mathsf{curl}}}}
\newcommand{\curltd}{\operatorname{\mathsf{curl}}}
\renewcommand{\div}{\operatorname{\mathsf{div}}}
\newcommand{\supp}{\operatorname{{supp}}}
\newcommand{\extd}[1][]{\operatorname{\mathsf{d}}_{#1}}
\newcommand{\delop}[1][]{\operatorname{\delta}_{#1}}
\newcommand{\hodge}[1][]{\operatorname{\star}_{#1}}
\newcommand{\contr}{\mathbin{\lrcorner}}
\newcommand*{\trace}[1]{\operatorname{tr}_{#1}}
\newcommand{\Poinc}{\mathsf{R}}
\newcommand{\bff}{{\mathbf{f}}}
\newcommand{\bfn}{{\mathbf{n}}}
\newcommand{\bfq}{{\mathbf{q}}}
\newcommand{\bfu}{{\mathbf{u}}}
\newcommand{\bfv}{{\mathbf{v}}}
\newcommand{\bfw}{{\mathbf{w}}}
\newcommand{\bfx}{{\mathbf{x}}}
\newcommand{\bfz}{{\mathbf{z}}}
\providecommand{\BH}{{\boldsymbol{H}}}
\newcommand{\phibf}{\boldsymbol{\phi}}
\newcommand{\psibf}{\boldsymbol{\psi}}
\newcommand{\Phibf}{\boldsymbol{\Phi}}
\providecommand{\FM}{\mathfrak{M}}
\providecommand{\bbN}{\mathbb{N}}
\providecommand{\bbR}{\mathbb{R}}
\providecommand*{\Norm}[1]{\left\|{#1}\right\|} 
\newcommand*{\SP}[2]{\left({#1},{#2}\right)} 
\newcommand{\contemb}{\hookrightarrow}              
\newcommand*{\Op}[1]{\mathsf{#1}} 
\newcommand{\iti}[1]{\noindent\mbox{{\em (\romannumeral #1)\/}}}
\newcommand{\db}[1]{_{\raise-0.3ex\hbox{$\scriptstyle #1$}}}
\newcommand{\on}[1]{\!\left.\vphantom{|_|}\right|_{#1}}  
\newcommand{\overcirc}[2]{%
  \hspace{-0.7ex}\hspace{#1ex}{\raisebox{#2ex}{$\scriptstyle\circ$}}\hspace{-#1ex}}
\newcommand{\ocH}{\mathchoice
	{\overcirc{1.6}{1.7}H}{\overcirc{1.6}{1.6}H}
	{\overcirc{1.3}{1.1}H}{\overcirc{1.3}{1.1}H}}
\newcommand{\ocBH}{\overcirc{1.7}{1.6}\BH}
\newcommand{\ocY}{\overcirc{1.3}{1.6}Y}
\newcommand{\ocCV}{\mathchoice
	{\overcirc{1.25}{1.7}\mathcal{V}}{\overcirc{1.25}{1.6}\mathcal{V}}
	{\overcirc{1.0}{1.1}\mathcal{V}}{\overcirc{1.25}{1.1}\mathcal{V}}}
\newcommand{\ocCY}{\overcirc{1.35}{1.6}\mathcal{Y}}
\newcommand{\ocCZ}{\mathchoice
	{\overcirc{1.5}{1.7}\mathcal{Z}}{\overcirc{1.5}{1.6}\mathcal{Z}}
	{\overcirc{1.2}{1.1}\mathcal{Z}}{\overcirc{1.2}{1.1}\mathcal{Z}}}
\newcommand{\Dom}{\Omega}
\newcommand*{\Ltwo}[1][\Dom]{L^{2}({#1})}
\newcommand*{\NLtwo}[2][\Dom]{\Norm{#2}_{\Ltwo[#1]}}
\newcommand*{\SPLtwo}[3][\Dom]{\SP{#2}{#3}_{\Ltwo[#1]}}
\newcommand*{\zbHone}[1][\Dom]{\zbHm[#1]{1}}
\newcommand*{\Hm}[2][\Dom]{H^{#2}({#1})}
\newcommand*{\HHm}[2][\Dom]{\boldsymbol{H}^{#2}({#1})}
\newcommand*{\zbHm}[2][\Dom]{\ocH^{#2}({#1})}
\newcommand*{\Hdiv}[1][\Dom]{\boldsymbol{H}(\Div,{#1})}
\newcommand*{\zbHdiv}[1][\Dom]{\ocBH%
  (\Div,{#1})}
\newcommand*{\kHdiv}[1][\Dom]{\boldsymbol{H}(\Div0,{#1})}
\newcommand*{\Hcurl}[1][\Dom]{\boldsymbol{H}(\curl,{#1})}
\newcommand*{\Hcurlm}[2][\Dom]{\boldsymbol{H}^{#2}(\curl,{#1})}
\newcommand*{\zbHcurl}[1][\Dom]{\ocBH%
  (\curl,{#1})}
\newcommand*{\kHcurl}[1][\Dom]{\boldsymbol{H}(\curl0,{#1})}
\newcommand*{\zbHcurltd}[1][\Dom]{\ocBH%
  (\curltd,{#1})}
\newcommand{\SPN}[3][\Dom]{\SP{#2}{#3}_{0,{#1}}}
\newcommand*{\FCinf}[2][\Dom]{C^{\infty}({#1},\Lambda^{#2})}
\newcommand*{\FLtwo}[2][\Dom]{L^{2}({#1},\Lambda^{#2})}
\newcommand*{\NFLtwo}[3][\Dom]{\Norm{#3}_{\FLtwo[#1]{#2}}}
\newcommand*{\FHm}[3][\Dom]{H^{#3}({#1},\Lambda^{#2})}
\newcommand*{\zbFHm}[3][\Dom]{\ocH^{#3}({#1},%
  \Lambda^{#2})}
\newcommand*{\FHd}[2][\Dom]{H(\extd[#2],{#1})}
\newcommand*{\zbFHd}[2][\Dom]{\ocH(\extd[#2],{#1})}
\newcommand*{\zbkFHd}[2][\Dom]{\ocH(\extd[#2]0,{#1})}
\newcommand*{\NFHd}[3][\Dom]{\Norm{#3}_{\FHd[#1]{#2}}}
\newcommand*{\FHdr}[3][\Dom]{H^{#3}(\extd[#2],{#1})}
\newcommand*{\zbFXd}[2][\Dom]{\ocY(\extd[#2],{#1})}
\newcommand{\HX}[2][\mesh]{X({#1},\Lambda^{#2})}
\newcommand{\HXt}[3][\mesh]{\widetilde{X}_{#3}({#1},\Lambda^{#2})}
\newcommand{\NHX}[3][\mesh]{\Norm{#3}_{\HX[#1]{#2}}}
\newcommand{\HS}[2][\mesh]{S({#1},\Lambda^{#2})}
\newcommand{\NHS}[3][\mesh]{\Norm{#3}_{\HS[#1]{#2}}}
\newcommand{\mesh}{\FM}
\newcommand{\defaultmesh}{\mesh}
\newcommand*{\Faces}[2][\mesh]{\mathfrak{F}_{#2}({#1})}
\newcommand*{\Pol}{\mathcal{P}}
\newcommand*{\FPol}[2]{\Pol_{#1}(\Lambda^{#2})}
\newcommand*{\Wpl}[3][\defaultmesh]{%
  \ifthenelse{\equal{#1}{}}%
  {\mathcal{V}^{#2}_{#3}}%
  {\mathcal{V}^{#2}_{#3}({#1})}}
\newcommand*{\zbWpl}[3][\defaultmesh]{%
  \ifthenelse{\equal{#1}{}}%
  {\ocCV^{#2}_{#3}}%
  {\ocCV^{#2}_{#3}({#1})}}
\newcommand*{\kWpl}[3][\defaultmesh]{%
  \ifthenelse{\equal{#1}{}}%
  {\mathcal{V}^{#2}_{#3}(\extd0)}%
  {\mathcal{V}^{#2}_{#3}(\extd0,{#1})}}
\newcommand*{\zbkWpl}[3][\defaultmesh]{%
  \ifthenelse{\equal{#1}{}}%
  {\ocCV^{#2}_{#3}(\extd0)}%
  {\ocCV^{#2}_{#3}(\extd0,{#1})}}
\newcommand{\Xpl}[3][\defaultmesh]{%
  \ifthenelse{\equal{#1}{}}%
  {\mathcal{Y}^{#2}_{#3}}%
  {\mathcal{Y}^{#2}_{#3}({#1})}}
\newcommand{\zbXpl}[3][\defaultmesh]{%
  \ifthenelse{\equal{#1}{}}%
  {\ocCY^{#2}_{#3}}%
  {\ocCY^{#2}_{#3}({#1})}}
\newcommand{\zbZpl}[3][\defaultmesh]{%
  \ifthenelse{\equal{#1}{}}%
  {\ocCZ^{#2}_{#3}}%
  {\ocCZ^{#2}_{#3}({#1})}}
\newcommand*{\PO}[2]{\pi^{#1}_{#2}}
\newcommand*{\LPO}[3][\defaultcell]{\pi^{#2}_{#3,{#1}}}
\newcommand{\E}{\mathbf{E}}
\renewcommand{\H}{\mathbf{H}}
\begin{document}
\hfuzz 2pt

\title{Discrete compactness for the {$p$}-version of discrete differential forms}

\author{{Daniele Boffi}%
  \thanks{Dipartimento di Matematica ``F.~Casorati'', Universit\`a di Pavia,
    I-27100 Pavia, Italy, daniele.boffi@unipv.it}
  \and
  {Martin Costabel}
  \thanks{IRMAR, Universit\'e de Rennes 1, 35042
    Rennes, France, martin.costabel@univ-rennes1.fr}
  \and
  {Monique Dauge}
  \thanks{IRMAR, Universit\'e de Rennes 1, 35042
    Rennes, France, monique.dauge@univ-rennes1.fr}
  \and
  {Leszek Demkowicz}
  \thanks{The Institute for Computational Engineering and Sciences,
    The University of Texas at Austin, Austin, TX 78712, USA, leszek@ices.utexas.edu}
  \and
  {Ralf Hiptmair}
  \thanks{{SAM, ETH Z\"urich, CH-8092 Z\"urich,
      hiptmair@sam.math.ethz.ch}}
}

\maketitle

\begin{abstract}
  In this paper we prove the discrete compactness property for a wide class of $p$
  finite element approximations of non-elliptic variational eigenvalue problems in two
  and three space dimensions. In a very general framework, we find sufficient
  conditions for the $p$-version of a generalized discrete compactness property,
  which is formulated in the setting of discrete differential forms of order $\ell$
  on a polyhedral domain in $\bbR^d$ ($0<\ell<d$). One of the main tools for the
  analysis is a recently introduced smoothed Poincar\'e lifting operator [M.~Costabel
  and A.~McIntosh, On Bogovski\u{\i} and regularized Poincar\'e integral operators for de
  Rham complexes on Lipschitz domains, Math. Z., (2009)].  In the case
  $\ell=1$ our analysis shows that several widely used families of edge finite
  elements satisfy the discrete compactness property in $p$ and hence provide
  convergent solutions to the Maxwell eigenvalue problem. In particular,
  N\'ed\'elec elements on triangles and tetrahedra (first and second kind) and on
  parallelograms and parallelepipeds (first kind) are covered by our theory.
\end{abstract}

\section{Introduction: Maxwell eigenvalue problem}
\label{sec:introduction}

Maxwell's eigenvalue problem in a closed cavity $\Dom\in\bbR^3$ 
with perfectly conducting walls
can be written as follows by means of the Maxwell-Amp\`ere
and Faraday laws: Find the resonance
frequencies $\omega\in\bbR$ and the electromagnetic
fields $(\E,\H)\not=(0,0)$ such that
\begin{equation}
\label{eq:maxwell}
\begin{array}{cccl}
\curl\E=i\omega\mu\H\quad &\text{ and }\quad
&\curl\H=-i\omega\epsilon\E&\text{ in }\Dom\\
\E\times\bfn=0 \quad &\text{ and }\quad
&\H\cdot\bfn=0\quad &\text{ on }\partial\Dom,
\end{array}
\end{equation}
where $\epsilon$ and $\mu$ denote the dielectric permittivity and magnetic permeability,
respectively. The fields $\E$ and $\H$ are sought in $\Ltwo^3$.

For simplicity, we consider the case of homogeneous isotropic material with
normalized material constants ($\epsilon,\,\mu=1$) --- we will come back to the
general setting in Remark \ref{rem:epsilonmu}. In a classical way, the elimination of the magnetic field from equations \eqref{eq:maxwell} yields  the Maxwell eigenvalue problem with perfectly electrically conducting (PEC) walls in variational form:
\begin{equation}
\label{eq:maxevp}
\aligned
   &\mbox{Seek $\bfu\in\zbHcurl\setminus\{0\}$, \ $\omega\in\R^+_0$ \ such that}\\
   &\SPLtwo{\curl\bfu}{\curl\bfv} = 
   \omega^{2}\SPLtwo{\bfu}{\bfv}\ \forall
   \bfv\in\zbHcurl\;.\endaligned
\end{equation}
The elimination of the electric field would correspond to the same problem 
modelled through replacing $\zbHcurl$ with $\Hcurl$\footnote{By and large, we
  adopt the standard notations for Sobolev spaces, see \cite[Ch.~2]{GIR86}.}.

One aim of this paper is to prove the convergence of 
$H(\curl)$-conforming Galerkin discretizations of Maxwell eigenvalue problem \eqref{eq:maxevp} in the framework of the $p$-version of the finite element method. The finite element
approximation of Maxwell eigenvalues has been the object of intense investigations 
for more than 20 years.  It was soon recognized that the $H(\curl)$-conforming  Galerkin finite element discretizations need special finite element spaces that are generally termed \emph{edge} finite elements (see~\cite{NED80,NED86,BOS88a}).

The first attempts to analyze the discretized eigenvalue problem have been made for the
$h$-version of edge finite elements.  We mention \cite{KIK89} as a pioneering work on
lowest order edge finite elements, where the \emph{discrete compactness property}
(see~\cite{ANS71}) has been indicated as a key ingredient for the analysis. 
Other
relevant works are~\cite{BFG97,BOF99,CFR00,MOD99,KIK01,COD03,BO07}, and we
refer the interested reader to~\cite{HIP02,MON03} and to the references therein
for a review on this topic. 

In these references, the Maxwell eigenvalue problem is often studied using
variational formulations different from \eqref{eq:maxevp}, for example mixed
formulations \cite{BO07}, regularized formulations \cite{CostabelDauge02b,
  CostabelDaugeSchwab05} or mixed regularized formulations \cite{AFW09, BuCJ09}. With
the exception of the method of weighted regularization \cite{CostabelDauge02b,
  CostabelDaugeSchwab05, BuCJ09}, where $H^1$-conforming elements can be used, these
formulations use the $H(\curl)$-conforming edge elements. In their analysis, special
conditions implying convergence of the discrete eigenvalue problems are presented,
for example the so-called Fortid property \cite{BOF99}, or the GAP property
\cite{BUF05}. As explained there, these conditions are related to the discrete
compactness property. Here we choose to work with the simple variational formulation
\eqref{eq:maxevp} and its generalization to differential forms. The role of the
discrete compactness property in this context has been discussed in detail in
\cite{CFR00}.

The analysis presented in the references above covers the
$h$-version for basically all known families of edge finite elements.
It soon turned out, however, that the analysis of the $p$- and $hp$-versions
of edge finite elements needed tools different from those developed for the
$h$-version. In~\cite{BDC03} the two-dimensional triangular case has been studied for
the $hp$-version, but the analysis depends on a conjectured estimate which has only
been demonstrated numerically. In~\cite{BCD06} a rigorous proof for the $hp$-version
of 2D rectangular edge elements has been proposed (allowing for one-irregular hanging
nodes) which, in particular, contains the first proof of eigenvalue/eigenfunction
convergence for the pure spectral method ($p$-version with one element) on a
rectangle.

What paved the way for a successful attack on a general $p$-version analysis was the
regularized Poincar\'e lifting recently introduced in \cite{CMI08}: it enjoys
excellent continuity properties and at the same time respects discrete differential
forms. In this paper we are going to show how the regularized Poincar\'e lifting can be
combined with another recent invention, the projection based interpolation operators,
see \cite{DEM06,DEB04}, to clinch the analysis of the $p$-version of edge elements.
This allows to prove the discrete compactness (and hence the convergence of the
discrete eigensolutions) for a wide class of finite elements related to discrete
differential forms: for \eqref{eq:maxevp} this includes, in particular, N\'ed\'elec
elements on triangles and tetrahedra (first and second kind) and on parallelograms
and parallelepipeds (first kind).

As already mentioned, one of the key ingredients for the convergence analysis is the
discrete compactness property. Much insight can be gained from investigating it in
the more general framework of discrete differential forms (see~\cite{AFW06} for a
lucid introduction to this subject). In this setting, the proofs are more natural and
simultaneously cover, in particular, two- and three-dimensional Maxwell eigenvalue problems.

\medskip\noindent\textbf{Plan of the paper.} \ 
The structure of the paper is as follows. We start in Section~\ref{sec:diffform}
with a generalization of \eqref{eq:maxevp} to eigenvalue problems associated 
with the de Rham complex on differential forms.
Then we define the discrete compactness property and discuss its
significance in the context of Galerkin discretization: in association with
two standard completeness properties, it gives a crucial
sufficient condition for the convergence of eigenvalues and eigenvectors.
Section~\ref{sec:generic-assumptions} is the core of our paper and contains the
description of our abstract assumptions.  Having in mind the $p$-version of
finite elements, we consider a \emph{fixed} mesh $\mesh$ of a bounded Lipschitz polyhedron
$\Dom\subset\bbR^d$ and a sequence of spaces of discrete differential forms of
order $\ell$ (with $0<\ell<d$) together with projection operators onto discrete spaces; 
we prove that our assumptions
imply the validity of the discrete compactness property for such a sequence 
of spaces (Theorem~\ref{thm:main}).
The abstract theory relies on the existence of suitable Poincar\'e lifting
operators which are presented in Section~\ref{sec:smooth-poic-lift}. 
The mapping properties of these lifting operators allow to specify some of the function spaces appearing in our abstract assumptions. In Section~\ref{sec:discr-diff-forms-2} we recall
the classical families of discrete differential forms with high degree polynomial
coefficients on simplicial or tensor product elements.

Our abstract theory applies to any dimension $d$, but for want of suitable
  regularity results, embeddings, and projection operators, we can give examples satisfying all of its
  assumptions only in dimensions $d=2$ and $d=3$. This is done in
  Section~\ref{sec:discr-diff-forms}, where we concretize the function spaces and
recall embedding results and properties of projection based interpolation operators
related to these spaces.  All abstract assumptions are then satisfied, leading to the
main convergence result stated in Theorem \ref{thm:mainMax}. The analysis of a
$p$-version edge element discretization of the Maxwell eigenvalue problem
\eqref{eq:maxevp} is covered as case $d=3$ and $\ell=1$, see Corollary~\ref{cor:mainMax}.

\section{Differential forms and generalized Maxwell eigenvalue problem}
\label{sec:diffform}

The variational eigenvalue problem \eqref{eq:maxevp} turns out to be a
member of a larger family of eigenvalue problems, when viewed from the
perspective of differential forms. This more general perspective
offers the benefit of a unified theoretical treatment of different
kinds of eigenvalue problems, e.g., the scalar Laplace eigenproblem,
Maxwell cavity eigenproblems in dimensions $2$ and $3$, the
eigenproblem for the $\grad\Div$-operator in dimension $3$. This
policy has had remarkable success in numerical analysis recently, \textit{cf.}
\cite{Arnold02}. Thus, in this section we first recall some basic
notions related to differential forms. We refer the interested reader
to~\cite[Sect.~2]{AFW06} for an introduction to this subject.

\subsection{Function spaces of differential forms}
Given a bounded Lipschitz domain $\Omega\subset\bbR^d$, we denote by $\FCinf{\ell}$, $0\leq\ell\leq d$, the
space of smooth differential forms of degree $\ell$ on $\Dom$ and 
by
$\extd[\ell]:\FCinf{\ell}\to\FCinf{\ell+1}$
the \emph{exterior derivative}.

We rely on the Hilbert spaces
\begin{gather}
  \label{def:FHd}
  \FHd{\ell} := \{\bfv\in \FLtwo{\ell}: \extd[\ell]\bfv\in\FLtwo{\ell+1}\}\;,
\end{gather}
where $\FLtwo{\ell}$ is the space of differential $\ell$-forms on $\Dom$ with
square integrable coefficients in their canonical basis representation,
see~\cite[Sect.~2]{CMI08}. Its inner product can be expressed as
\begin{gather}
  \label{eq:SPFLtwo}
  \SPN{\bfu}{\bfv} := \int\nolimits_{\Dom}\bfu\wedge\hodge\bfv\;,\quad
  \bfu,\bfv\in \FLtwo{\ell}\;,
\end{gather}
with $\hodge$ the Hodge star operator induced by the Euclidean metric
on $\bbR^{d}$, which maps $\ell$-forms to $(d-\ell)$-forms. As above,
a $\circ$ tags the subspaces of forms with vanishing trace
$\trace{\partial \Omega}$ on $\partial\Dom$, which can also be
obtained by the completion of compactly supported smooth $\ell$-forms
with respect to the $\FHd{\ell}$-norm:
\begin{equation}
\label{eq:2.3}
   \zbFHd{\ell}  := \{\bfv\in\FHd{\ell}: \trace{\partial \Omega}\bfv = 0 \}.
\end{equation}
The subspace of \emph{closed forms} is the kernel of $\extd[\ell]$ and is denoted by $\zbkFHd{\ell}$:
\begin{equation}
\label{eq:2.4}
   \zbkFHd{\ell}  := \{\bfv\in \zbFHd{\ell}: \extd[\ell]\bfv = 0 \}.
\end{equation}

\subsection{Variational eigenvalue problems}
\label{sec:vari-eigenv-probl}

After choosing bases for the spaces of alternating multilinear forms on $\bbR^{d}$,
vector fields (``vector proxies'') $\Omega\mapsto\bbR^{\binom{d}{\ell}}$ provide
an isomorphic model for differential $\ell$-forms on $\Omega$. Choosing the standard
``Euclidean basis'', the operators $\hodge, \delop, \trace{\partial \Omega}$ are
incarnated by familiar operators of classical vector analysis, different for
different dimension $d$ and degree $\ell$, see Table~\ref{tb:proxies} and
\cite[Table~2.1]{AFW06}.

\begin{table}[ht]
\caption{Identification between (operators on) differential forms and 
(operators on) Euclidean vector proxies in $\bbR^2$ and $\R^3$}
\begin{center}
\begin{tabular}{|l|l|l|l|}
\hline
\multicolumn{2}{|c|}{Differential form}&
\multicolumn{2}{c|}{Proxy representation}\\
\hline
\multicolumn{2}{|c|}{}&\multicolumn{1}{c|}{$d=2$}&\multicolumn{1}{c|}{$d=3$}\\
\hline \hline
\multirow{3}{*}{$\ell=0$}
&$\extd[0]$&$\grad$&$\grad$\\
&$\trace{\partial\Dom}\phi$&$\phi_{|\partial\Dom}$
&$\phi_{|\partial\Dom}$\\
&$\zbFHd{0}$&$\zbHone$&$\zbHone$\\
\hline
\hline
\multirow{4}{*}{$\ell=1$}
&$\extd[1]$&$\curltd$&$\curl$\\
&$\trace{\partial\Dom}\bfu$&$(\bfu\times\bfn)_{|\partial\Dom}$
&$(\bfu\times\bfn)_{|\partial\Dom}$\\
&$\zbFHd{1}$&$\zbHcurltd$&$\zbHcurl$\\
\hline
\hline
\multirow{4}{*}{$\ell=2$}
&$\extd[2]$&$0$&$\div$\\
&$\trace{\partial\Omega}\bfq$&$0$&$(\bfq\cdot\bfn)_{|\partial\Omega}$\\
&$\zbFHd{2}$&$\Ltwo$&$\zbHdiv$\\
\hline
\end{tabular}
\end{center}
\label{tb:proxies}
\end{table}

Hence, the eigenvalue problem \eqref{eq:maxevp} with
$\epsilon,\mu\equiv 1$ is the special case $d=3$, $\ell=1$, of the
following variational eigenvalue problem for differential
$\ell$-forms, $0\leq \ell < d$: 
\begin{equation}
\label{eq:evp}
\aligned
  &\mbox{Seek $\bfu\in\zbFHd{\ell}\setminus\{0\}$, \ $\omega\in\bbR^{+}_{0}$, such that}\\
  &\SPN{\extd[\ell]\bfu}{\extd[\ell]\bfv} = \omega^{2}\SPN{\bfu}{\bfv}\quad
  \forall \bfv\in\zbFHd{\ell}\;.\\[1ex]
\endaligned
\end{equation}
A key observation is that the bilinear form $(\bfu,\bfv)\mapsto
\SPN{\extd[\ell]\bfu}{\extd[\ell]\bfv}$ has an infinite dimensional
kernel $\zbkFHd{\ell}$ comprising all closed $\ell$-forms. It provides
the invariant subspace associated with the essential spectrum $\{0\}$
of \eqref{eq:evp}. This essential spectrum can be identified as the
main source of difficulties confronted in the Galerkin discretization
of \eqref{eq:evp}.

On the other hand, any solution $\bfu$ of \eqref{eq:evp} for
$\omega\not=0$ satisfies $\SPN{\bfu}{\extd[\ell-1]\psibf} = 0$ for all
$\psibf\in\zbFHd{\ell-1}$. Thus the eigenfunctions corresponding to
non-zero eigenvalues belong to the subspace
\begin{gather}
\label{eq:zbY}
   \zbFXd{\ell} := \{ \bfv\in\zbFHd{\ell}: \SPN{\bfv}{\extd[\ell-1]\psibf} = 0 
   \;\;\forall \psibf\in\zbFHd{\ell-1}  \},
\end{gather}
which means they belong to the kernel of $\delop[\ell]$. This is the
generalization of the divergence free constraint found for electric
fields in the Maxwell case. From \cite{PIC84} we learn the following
theorem.

\begin{theorem}
  \label{thm:compemb}
  For any $d\in\bbN$, $0\leq l \leq d$, the embedding
  of $\zbFXd{\ell}$ in $\FLtwo{\ell}$
  is compact. 
\end{theorem}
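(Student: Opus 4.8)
The plan is to prove sequential compactness: every sequence $(\bfv_n)$ bounded in $\zbFXd{\ell}$ admits a subsequence converging in $\FLtwo{\ell}$. I would begin by reformulating the space. By integration by parts, the constraint $\SPN{\bfv}{\extd[\ell-1]\psibf}=0$ for all $\psibf\in\zbFHd{\ell-1}$ is exactly the statement that $\bfv$ lies in the (maximal) domain of $\delop[\ell]$ with $\delop[\ell]\bfv=0$; hence $\zbFXd{\ell}=\{\bfv\in\zbFHd{\ell}:\delop[\ell]\bfv=0\}$ consists of co-closed forms with vanishing tangential trace, and its norm is the graph norm of $\extd[\ell]$. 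The extreme cases are immediate: for $\ell=0$ one has $\zbFXd{0}=\zbHone$ and the assertion is the Rellich--Kondrachov theorem, while for $\ell=d$ the space collapses to the finite-dimensional kernel of $\delop[d]$. I therefore concentrate on $0<\ell<d$.

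The decisive tool is the regularity-gaining potential furnished by the smoothed Poincar\'e lifting of \cite{CMI08} (Section~\ref{sec:smooth-poic-lift}). Applying the Hodge star, a form $\bfv\in\zbFXd{\ell}$ is turned into the \emph{closed} $(d-\ell)$-form $\hodge\bfv$, and it suffices to produce a regular potential for it. On a subdomain star-shaped with respect to a ball the averaged Poincar\'e operator $\Poinc$ of \cite{CMI08} gives this exactly, raising the Sobolev order by one; on the Lipschitz polyhedron $\Dom$ I would patch finitely many such operators with a smooth partition of unity. The commutators with the cut-off functions are operators of order $-1$, hence they too gain one derivative. The outcome is a decomposition
\[
  \bfv=\delop[\ell+1]\bfw+\bfr,\qquad
  \NFHm{\ell+1}{1}{\bfw}+\NFHm{\ell}{1}{\bfr}\le C\,\NFLtwo{\ell}{\bfv},
\]
with $\bfw\in\FHm{\ell+1}{1}$ a regular co-potential and $\bfr\in\FHm{\ell}{1}$ the smoothing patching remainder, the constant $C$ being independent of $\bfv$.

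Granting this, the compactness argument is short. For a bounded sequence $(\bfv_n)\subset\zbFXd{\ell}$ the associated $\bfw_n$ and $\bfr_n$ are bounded in $\FHm{\ell+1}{1}$ and $\FHm{\ell}{1}$, so Rellich--Kondrachov lets me pass to a subsequence along which $\bfw_n\to\bfw$ in $\FLtwo{\ell+1}$ and $\bfr_n\to\bfr$ in $\FLtwo{\ell}$. Because $\bfv_n-\bfv_m$ has vanishing tangential trace, integration by parts gives $\SPN{\bfv_n-\bfv_m}{\delop[\ell+1](\bfw_n-\bfw_m)}=\SPN{\extd[\ell](\bfv_n-\bfv_m)}{\bfw_n-\bfw_m}$, whence
\[
  \NFLtwo{\ell}{\bfv_n-\bfv_m}^{2}
  =\SPN{\extd[\ell](\bfv_n-\bfv_m)}{\bfw_n-\bfw_m}
   +\SPN{\bfv_n-\bfv_m}{\bfr_n-\bfr_m}.
\]
In the first term $\extd[\ell]\bfv_n$ stays bounded in $\FLtwo{\ell+1}$ while $\bfw_n-\bfw_m\to0$ in $\FLtwo{\ell+1}$; in the second term $\bfr_n-\bfr_m\to0$ while $\bfv_n-\bfv_m$ stays bounded. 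Both right-hand terms therefore tend to $0$, so $(\bfv_n)$ is a Cauchy sequence in $\FLtwo{\ell}$ and converges.

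The only genuinely hard step is the construction of the decomposition in the second paragraph. On a merely Lipschitz polyhedron one cannot hope for $\zbFXd{\ell}\subset\FHm{\ell}{1}$, so the missing derivative must be harvested from the potential $\bfw$ and the remainder $\bfr$ rather than from $\bfv$ itself; this is precisely the service rendered by the mapping properties of the smoothed Poincar\'e lifting, and it is the reason this operator is central to the whole paper. Everything else is the abstract pattern ``regular potential $+$ Rellich $+$ integration by parts'' that underlies all proofs of this Weck--Picard type compactness result.
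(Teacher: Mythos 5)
Your argument is correct in substance, but it is not the route the paper takes: the paper offers no proof of Theorem~\ref{thm:compemb} at all --- it imports the statement from Picard \cite{PIC84}, whose ``elementary'' proof predates and does not use regularized Poincar\'e operators. What you do instead is derive the compactness from the paper's own central tool, the smoothed lifting of \cite{CMI08}: Hodge duality turns a co-closed $\bfv\in\zbFXd{\ell}$ into a closed $(d-\ell)$-form, a finite covering of the Lipschitz domain by pieces $D_i=\Omega\cap U_i$ star-shaped with respect to balls gives local $H^1$ potentials $\Poinc_i(\hodge\bfv)$, and patching with cutoffs $\chi_i$ yields $\hodge\bfv=\extd\bigl(\sum_i\chi_i\Poinc_i\hodge\bfv\bigr)-\sum_i\extd\chi_i\wedge\Poinc_i\hodge\bfv$, i.e.\ exactly your decomposition $\bfv=\delop[\ell+1]\bfw+\bfr$ with $H^1$ bounds in terms of $\NFLtwo{\ell}{\bfv}$, the remainder being of commutator type and of order $-1$ as you say. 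Two details deserve explicit mention if you write this up: (i) each $\chi_i\Poinc_i(\hodge\bfv\on{D_i})$ must be extended by zero to $\Omega$, which is legitimate only because $\supp\chi_i$ is compactly contained in $U_i$ --- the Poincar\'e operators themselves do not preserve supports, so the cutoff must sit \emph{outside} the lifting, not inside; (ii) the integration by parts $\SPN{\bfv_n-\bfv_m}{\delop[\ell+1](\bfw_n-\bfw_m)}=\SPN{\extd[\ell](\bfv_n-\bfv_m)}{\bfw_n-\bfw_m}$ requires approximating $\bfv_n-\bfv_m\in\zbFHd{\ell}$ by compactly supported smooth forms in the graph norm, which is exactly what the definition \eqref{eq:2.3} of the zero-trace space provides. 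Comparing the two routes: citing \cite{PIC84} is shorter and rests on genuinely elementary tools, while your argument has the merit of being self-contained within the machinery the paper already deploys in Section~\ref{sec:smooth-poic-lift}, of sidestepping all cohomological issues (the remainder $\bfr$ absorbs any harmonic component, so no topological assumption on $\Omega$ is needed), and of producing along the way a quantitative regular decomposition $\bfv=\delop[\ell+1]\bfw+\bfr$ with $\Norm{\bfw}_{\FHm{\ell+1}{1}}+\Norm{\bfr}_{\FHm{\ell}{1}}\le C\,\NFLtwo{\ell}{\bfv}$, which is of independent interest.
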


Thus, by restricting the eigenvalue problem to $\zbFXd{\ell}$, we can use 
Riesz-Schauder theory. This implies that \eqref{eq:evp} gives rise to an unbounded
sequence of positive eigenvalues $\lambda^k=(\omega^k)^2$
\begin{gather}
\label{eq:eval}
 \lambda^0=0 < \lambda^1 \le \lambda^2\le\dots\,,
 \qquad \lambda^k\to\infty \; (k\to\infty)\,,
\end{gather}
with associated finite dimensional mutually $\Ltwo$-orthogonal eigenspaces.

\begin{remark}
  \label{rem:NBC}
  Owing to the zero trace boundary conditions imposed on the functions in
  \eqref{eq:evp}, it may be called a Dirichlet eigenvalue problem. Using
  $\FHd{\ell}$ as variational space would result in the corresponding Neumann
  eigenvalue problem. Its analysis runs utterly parallel to the Dirichlet
  case using the techniques presented below. 
\end{remark}

\subsection{Approximation of the eigenvalue problem and the role of discrete compactness}
\label{sec:discrete-compactness}

In the sequel we fix the degree $\ell$, $0\leq\ell< d$, of the
differential forms. Spaces of \emph{discrete differential forms}
\begin{gather*}
  \zbWpl[]{\ell}{p}\subset\zbFHd{\ell}\;,\quad \dim  \zbWpl[]{\ell}{p} < \infty\;,
\end{gather*}
lend themselves to a straightforward discretization of \eqref{eq:evp}. In this section,
$p\in\bbN$ stands for an abstract discretization parameter, and, sloppily speaking,
large values of $p$ hint at trial/test spaces of high resolution. 

Consider the approximation of the eigenvalue problem \eqref{eq:evp}
by the Galerkin method: 
\begin{equation}
  \label{eq:evp_p}
\aligned
  &\mbox{Find $\bfu_p\in\zbWpl[]{\ell}{p}\setminus\{0\}$, \ $\omega\in\bbR^{+}_{0}$, such that}\\
  &\SPN{\extd[\ell]\bfu_p}{\extd[\ell]\bfv_p} = \omega^{2}\SPN{\bfu_p}{\bfv_p}\quad
  \forall \bfv\in\zbWpl[]{\ell}{p}\;.\\[1ex]
\endaligned
\end{equation}
Now, the key issue is convergence of eigenvalues and eigenvectors as
$p\to\infty$, rigorously cast into the concept of \emph{spectrally
  correct, spurious-free approximation} \cite[Sect.~4]{CFR00}. Let us recall
these notions in a few words for the case of self-adjoint nonnegative
operators without continuous spectrum (which is the case here).

The spectral correctness of the approximation of an eigenvalue problem
such as \eqref{eq:evp} by a sequence of finite rank eigenvalue
problems \eqref{eq:evp_p} means that all eigenvalues and all
eigenvectors of \eqref{eq:evp} are approached by the eigenvalues and
eigenvectors of \eqref{eq:evp_p} as $p\to\infty$. If \eqref{eq:evp}
has a compact resolvent (which is the case \emph{only when} $\ell=0$),
the spectral correctness is an optimal notion: It implies that if
$\{\lambda^k\}_{k\ge1}$ and $\{\lambda^k_p\}_{k\ge1}$ are the
increasing eigenvalue sequences of \eqref{eq:evp} and \eqref{eq:evp_p}
(with eigenvalues repeated according to their multiplicities), then
\begin{equation}
\label{eq:conveig}
   \lambda^k_p \to \lambda^k\quad\mbox{as}\quad p\to\infty\quad \forall k\ge1,
\end{equation}
and the gaps between eigenspaces (correctly assembled according to
multiplicities of the eigenvalues of \eqref{eq:evp}) tend to $0$ as
$p\to\infty$.

If we face an eigenvalue problem for a self-adjoint non-negative
operator with an infinite dimensional kernel, and otherwise discrete
positive spectrum (which is the case for \eqref{eq:evp} for all
$\ell\ge1$), the spectral correctness implies the same properties as
above with the following modifications of the definitions: Now
$\{\lambda^k\}_{k\ge1}$ is the increasing sequence of \emph{positive}
eigenvalues of \eqref{eq:evp} (as specified in \eqref{eq:eval}) and,
given a positive number $\varepsilon<\lambda^1$,
$\{\lambda^k_p\}_{k\ge1}$ is the increasing sequence of the
eigenvalues of \eqref{eq:evp_p} larger than $\varepsilon$ (still with
repetitions according to multiplicities). With such conventions, 
\emph{spectral correctness} still implies convergence of eigenvalues
\eqref{eq:conveig} and eigenspaces as above. In this context, 
\emph{spurious-free approximation} means that there exists
$\varepsilon_0>0$ such that all eigenvalues of \eqref{eq:evp_p} less
than $\varepsilon_0$ are zero. Therefore, spectrally correct,
spurious-free approximation implies the convergence property
\eqref{eq:conveig} and the corresponding convergence of eigenspaces, if
we define $\{\lambda^k_p\}_{k\ge1}$ as the increasing sequence of the
\emph{positive eigenvalues of} \eqref{eq:evp_p}.

There exist several different ways, all well studied and summarized in
the literature of the last decade, for proving the convergence of the
discrete eigenvalue problem \eqref{eq:evp_p} to the continuous
eigenvalue problem \eqref{eq:evp}: One can use a reformulation as an
eigenvalue problem in mixed form as analyzed in \cite{BO07}, or one
can use a regularization which gives an elliptic eigenvalue problem
for the Hodge-Laplace operator as analyzed in \cite{AFW06}, or one can
follow the arguments of \cite{CFR00} and study the non-elliptic
problem \eqref{eq:evp} directly.

Here we outline the latter approach, which employs the analysis of
\cite{DNR78a} of the approximation of eigenvalue problems of
non-compact selfadjoint operators. Since \cite{CFR00} deals only with
the Maxwell case, i.\- e. $d=3$, $\ell=1$, we examine the main
arguments, in order to verify that they are also valid for the general
case. The proofs we give are adaptations of those of \cite{CFR00} to our more general situation.

Let us define the solution operator $A:\FLtwo{\ell}\to\zbFHd{\ell}$ of the source problem corresponding to the eigenvalue problem \eqref{eq:evp} and its discrete counterpart
 $A_p:\zbWpl[]{\ell}{p}\to\zbWpl[]{\ell}{p}$ by
 \begin{gather}
 \label{eq:A&A_p}
 \begin{aligned}
 \SPN{\extd[\ell]A\bff}{\extd[\ell]\bfv} + \SPN{A\bff}{\bfv} &= \SPN{\bff}{\bfv}
 \quad \forall \bfv\in \zbFHd{\ell}
 \\
 \SPN{\extd[\ell]A_p\bff}{\extd[\ell]\bfv} + \SPN{A_p\bff}{\bfv} &= \SPN{\bff}{\bfv}
 \quad \forall \bfv\in \zbWpl[]{\ell}{p}\,.
 \end{aligned}
\end{gather}

Note that the operators $A$ and $A_p$ have the same eigenfunctions and the same eigenvalues (after a transformation) as the eigenvalue problems \eqref{eq:evp} and \eqref{eq:evp_p}. Namely, \eqref{eq:evp} and \eqref{eq:evp_p} are equivalent to the relations
\begin{gather}
\label{eq:evpA}
  \bfu=(\omega^2+1)A\bfu\,;\qquad
  \bfu_p=(\omega^2+1)A_p\bfu_p\,.
\end{gather}
The infinite-dimensional eigenspace at $\omega=0$ shows that $A$ is not a compact operator.

Following \cite{CFR00}, three conditions are identified that together are necessary and sufficient for a spectrally correct, spurious-free approximation of $A$ by $A_p$ or, equivalently, of the eigenvalue problem \eqref{eq:evp} by the discrete eigenvalue problem \eqref{eq:evp_p}. 

The first condition is rather natural. It states that the sequence  of discrete spaces
$\big(\zbWpl[]{\ell}{p}\big)_{p\in\bbN}$ is asymptotically dense in $\zbFHd{\ell}$
(compare \cite[Condition (CAS) -- completeness of approximating subspaces]{CFR00})
\begin{equation}
  \label{eq:CAS}
  \mbox{} \hskip -0.9em\mbox{(CAS)} \hskip 3em
  \lim_{p\to\infty}\inf_{\bfv_p\in\zbWpl[]{\ell}{p}}\NFHd{\ell}{\bfv-\bfv_p}=0\quad
  \forall\bfv\in\zbFHd{\ell}\;.
   \hskip 3em
\end{equation}
The second condition, only relevant for $\ell>0$, states that closed forms can be well
approximated by discrete closed forms (compare \cite[Condition (CDK) -- completeness of discrete kernels]{CFR00})
\begin{equation}
  \label{eq:CDK}
  \mbox{} \hskip -0.9em\mbox{(CDK)} \hskip 1em
  \lim_{p\to\infty}\inf_{\bfz_p\in\zbWpl[]{\ell}{p}\cap\zbkFHd{\ell}}
  \NLtwo{\bfz-\bfz_{p}} = 0\quad
  \forall\bfz\in\zbkFHd{\ell}\;.
   \hskip 1em
\end{equation}
The third condition is the most intricate one and has been dubbed
\emph{discrete compactness}. For its formulation, we introduce the orthogonal
complement space of the discrete closed forms:
\begin{gather}
  \label{eq:Zpldef}
  \zbZpl[]{\ell}{p} := \{\bfu_{p}\in \zbWpl[]{\ell}{p}:\;
  \SPN{\bfu_{p}}{\bfz_{p}} = 0 \quad
       \forall \bfz_p\in\zbWpl[]{\ell}{p}\cap\zbkFHd{\ell}\}.
\end{gather}  

\begin{definition}
  \label{def:dc}
  Let us choose $\ell\in\{1,\ldots,d-1\}$.
  The \emph{discrete compactness property} holds for a family
  $\big(\zbWpl[]{\ell}{p}\,\big)_{p\in\mathbb{N}}$ of finite dimensional subspaces of
  $\zbFHd{\ell}$, if for any subsequence $\N'$ of $\N$, any \emph{bounded} sequence 
$$
  \big(\bfu_{p}\big)_{p\in\mathbb{N'}}
  \subset\zbFHd{\ell}\quad\mbox{with}\quad\bfu_{p}\in\zbZpl[]{\ell}{p}
$$ 
contains a subsequence that
\emph{converges in $\FLtwo{\ell}$}.
\end{definition}

The convergence proof is based on two lemmas, the first of which corresponds to \cite[Theorem 4.12]{CFR00}. It implies, according to \cite[Condition P1) and Theorems 2,4,5,6]{DNR78a}, the spectral correctness of the approximation.

\begin{lemma}
\label{lem:CAS+DCP->CHN}
 If \eqref{eq:CAS} and the discrete compactness property hold, then 
\begin{gather}
 \label{eq:CHN}
  \lim_{p\to\infty} \sup_{\bfv_p\in\zbWpl[]{\ell}{p}\,;\, \NFHd{\ell}{\bfv_p}=1}
  \NFHd{\ell}{A\bfv_p-A_p\bfv_p}=0\,.
\end{gather}
\end{lemma}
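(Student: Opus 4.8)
The plan is to recognize that the quantity in \eqref{eq:CHN} is a \emph{best-approximation error} and then to force it to zero by a compactness-plus-density argument. Writing $a(\bfu,\bfv):=\SPN{\extd[\ell]\bfu}{\extd[\ell]\bfv}+\SPN{\bfu}{\bfv}$ for the inner product inducing the norm $\NFHd{\ell}{\cdot}$, I first read off from \eqref{eq:A&A_p} that for $\bff\in\zbWpl[]{\ell}{p}$ one has $a(A\bff-A_p\bff,\bfw_p)=0$ for all $\bfw_p\in\zbWpl[]{\ell}{p}$, so $A_p\bff$ is the $a$-orthogonal projection of $A\bff$ onto $\zbWpl[]{\ell}{p}$ and hence $\NFHd{\ell}{A\bff-A_p\bff}=\inf_{\bfw_p\in\zbWpl[]{\ell}{p}}\NFHd{\ell}{A\bff-\bfw_p}$. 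Testing the first line of \eqref{eq:A&A_p} with $\bfv=A\bff$ also gives the stability bound $\NFHd{\ell}{A\bff}\le\NLtwo{\bff}$, i.e.\ $A\colon\FLtwo{\ell}\to\zbFHd{\ell}$ is continuous. Finally, since $\extd[\ell]$ annihilates closed forms, both $A$ and $A_p$ act as the identity on closed, respectively discrete closed, forms: if $\extd[\ell]\bfz=0$ then $a(\bfz,\bfv)=\SPN{\bfz}{\bfv}$, whence $A\bfz=\bfz$, and likewise $A_p\bfz_p=\bfz_p$ for $\bfz_p\in\zbWpl[]{\ell}{p}\cap\zbkFHd{\ell}$.

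I would then argue by contradiction. If \eqref{eq:CHN} fails there are $\delta>0$, a subsequence $\N'$, and $\bfv_p\in\zbWpl[]{\ell}{p}$ with $\NFHd{\ell}{\bfv_p}=1$ and $\NFHd{\ell}{A\bfv_p-A_p\bfv_p}\ge\delta$ for $p\in\N'$. The decisive step is to split $\bfv_p=\bfz_p+\bfy_p$ $\FLtwo{\ell}$-orthogonally, with $\bfz_p\in\zbWpl[]{\ell}{p}\cap\zbkFHd{\ell}$ and $\bfy_p\in\zbZpl[]{\ell}{p}$. Because $\extd[\ell]\bfz_p=0$, this splitting is orthogonal in $\NFHd{\ell}{\cdot}$ as well and $\extd[\ell]\bfy_p=\extd[\ell]\bfv_p$, so $\NFHd{\ell}{\bfy_p}\le1$; thus $(\bfy_p)_{p\in\N'}$ is a bounded sequence lying in $\zbZpl[]{\ell}{p}$, which is exactly the setting of the discrete compactness property. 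Moreover, by the identity action on (discrete) closed forms the closed part cancels, $A\bfv_p-A_p\bfv_p=A\bfy_p-A_p\bfy_p$, so it suffices to show $\NFHd{\ell}{A\bfy_p-A_p\bfy_p}\to0$ along a subsequence.

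By the discrete compactness property I extract a further subsequence along which $\bfy_p\to\bfy$ in $\FLtwo{\ell}$. Continuity of $A$ then yields $A\bfy_p\to A\bfy$ in $\zbFHd{\ell}$, while $A\bfy\in\zbFHd{\ell}$ is a \emph{fixed} target to which \eqref{eq:CAS} applies. Combining the best-approximation characterization from the first paragraph with the triangle inequality, $\NFHd{\ell}{A\bfy_p-A_p\bfy_p}\le\NFHd{\ell}{A\bfy_p-A\bfy}+\inf_{\bfw_p\in\zbWpl[]{\ell}{p}}\NFHd{\ell}{A\bfy-\bfw_p}$, and both terms tend to $0$ --- the first by continuity of $A$ together with $\FLtwo{\ell}$-convergence, the second by (CAS). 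This contradicts $\NFHd{\ell}{A\bfy_p-A_p\bfy_p}\ge\delta$ and proves the lemma. I expect the main obstacle to be conceptual rather than computational: one cannot apply discrete compactness to $\bfv_p$ directly, since it lives in all of $\zbWpl[]{\ell}{p}$ rather than in $\zbZpl[]{\ell}{p}$, so the whole argument hinges on the orthogonal splitting and on the exact cancellation of the discrete closed part, which is precisely what lets the non-compact kernel direction drop out \emph{before} compactness is invoked.
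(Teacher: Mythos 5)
Your proposal is correct and follows essentially the same route as the paper's proof: the paper likewise uses the identity action of $A$ and $A_p$ on (discrete) closed forms together with the $\FLtwo{\ell}$-orthogonal splitting of $\zbWpl[]{\ell}{p}$ to reduce the supremum to $\zbZpl[]{\ell}{p}$ (you perform this splitting on the contradiction sequence instead of on the supremum, which is only a cosmetic difference), then invokes the best-approximation characterization of $\NFHd{\ell}{A\bfv_p-A_p\bfv_p}$, discrete compactness, continuity of $A$, (CAS) for the fixed limit $A\bfv$, and the triangle inequality to reach a contradiction. Your write-up additionally verifies details the paper leaves implicit (the stability bound for $A$, the graph-norm orthogonality of the splitting), but the argument is the same.
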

\begin{proof}
Note first that for $\bfv_p\in\zbWpl[]{\ell}{p}\cap\zbkFHd{\ell}$ there holds
$A\bfv_p=\bfv_p=A_p\bfv_p$, so that by orthogonal decomposition of $\zbWpl[]{\ell}{p}$ one gets
\[
 \sup_{\bfv_p\in\zbWpl[]{\ell}{p}\,;\, \NFHd{\ell}{\bfv_p}=1} \hskip-9pt
  \NFHd{\ell}{A\bfv_p-A_p\bfv_p}
 \ = \!\!\!\!
 \sup_{\bfv_p\in\zbZpl[]{\ell}{p}\,;\, \NFHd{\ell}{\bfv_p}=1} \hskip-9pt
  \NFHd{\ell}{A\bfv_p-A_p\bfv_p}\,.
\]
Furthermore, one has by definition of $A$ and $A_p$
\[
 \NFHd{\ell}{A\bfv_p-A_p\bfv_p} = 
   \inf_{\bfw_p\in\zbWpl[]{\ell}{p}} \NFHd{\ell}{A\bfv_p-\bfw_p}\,.
\]
Assume now that \eqref{eq:CHN} does not hold. 
Then there exists $\varepsilon>0$, a subsequence $\N'$ of $\N$ and a sequence $(\bfv_p)_{p\in\N'}$ with 
$\bfv_p\in\zbZpl[]{\ell}{p}$ satisfying $\NFHd{\ell}{\bfv_p}=1$ and 
\begin{gather}
\label{eq:geeps}
  \NFHd{\ell}{A\bfv_p-\bfw_p} \ge \varepsilon 
  \qquad
  \forall p\in\N',\; \bfw_p\in\zbWpl[]{\ell}{p}\,.
\end{gather}
We can apply the discrete compactness property to the sequence $(\bfv_p)$ and obtain a subsequence converging in $\FLtwo{\ell}$ to some $\bfv\in\FLtwo{\ell}$. 
Since $A:\FLtwo{\ell}\to\zbFHd{\ell}$ is continuous, we find $A\bfv\in\zbFHd{\ell}$, and the approximation property \eqref{eq:CAS} provides us with a sequence $(\bfw_p)$ with $\bfw_p\in\zbWpl[]{\ell}{p}$ that converges in $\zbFHd{\ell}$ to $A\bfv$. Hence for the subsequence we obtain 
\[
  \NFHd{\ell}{A\bfv_p-\bfw_p} \le 
  \NFHd{\ell}{A\bfv_p-A\bfv} + \NFHd{\ell}{A\bfv-\bfw_p} \to 0\,,
\]
in contradiction with \eqref{eq:geeps}.
\end{proof}

The second lemma corresponds to \cite[Corollary 2.20]{CFR00}. It gives the discrete Friedrichs inequality (in \cite{BO07} also called ``ellipticity in the discrete kernel''), and it is easy to see that this implies that $\omega=0$ is not a limit point of positive discrete eigenvalues, so that the spurious-free property of the approximation follows.

\begin{lemma}
\label{lem:CDK+DCP->DFI}
  If \eqref{eq:CDK} and the discrete compactness property hold, then there exists $\alpha>0$ such that for all $p\in\N$
  \begin{gather}
  \label{eq:DFI}
   \NLtwo{\extd[\ell]\bfv} \ge \alpha \NLtwo{\bfv}
   \quad \forall \bfv \in \zbZpl[]{\ell}{p}
  \end{gather}
\end{lemma}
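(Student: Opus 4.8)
The plan is to argue by contradiction, mimicking the classical proof of a Friedrichs-type inequality but replacing the usual Rellich compactness with the discrete compactness property. Suppose no uniform constant $\alpha>0$ exists. Then for every $n\in\N$ there are an index $p_n$ and a form $\bfv_{p_n}\in\zbZpl[]{\ell}{p_n}$ that violates \eqref{eq:DFI} with $\alpha=1/n$; after scaling we may assume $\NLtwo{\bfv_{p_n}}=1$ and $\NLtwo{\extd[\ell]\bfv_{p_n}}\le 1/n$. Passing to the index set $\N'=\{p_n:n\in\N\}\subset\N$, we obtain a sequence with $\bfv_{p_n}\in\zbZpl[]{\ell}{p_n}$ that is bounded in $\zbFHd{\ell}$, since $\NFHd{\ell}{\bfv_{p_n}}^2=\NLtwo{\bfv_{p_n}}^2+\NLtwo{\extd[\ell]\bfv_{p_n}}^2\le 2$.

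Next I would invoke the discrete compactness property (Definition \ref{def:dc}): the bounded sequence $(\bfv_{p_n})$ admits a subsequence, still denoted $(\bfv_{p_n})$, converging in $\FLtwo{\ell}$ to some limit $\bfv$. By continuity of the $\FLtwo{\ell}$-norm, $\NLtwo{\bfv}=1$, so $\bfv\neq 0$. Because $\extd[\ell]\bfv_{p_n}\to 0$ in $\FLtwo{\ell+1}$ while $\bfv_{p_n}\to\bfv$ in $\FLtwo{\ell}$, the closedness of the (distributional) exterior derivative forces $\bfv\in\FHd{\ell}$ with $\extd[\ell]\bfv=0$; since $\zbFHd{\ell}$ is a closed subspace of $\FHd{\ell}$ under the graph norm and each $\bfv_{p_n}$ has vanishing trace, the limit inherits the zero trace, whence $\bfv\in\zbkFHd{\ell}$ is a nonzero closed form.

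Finally I would exploit the orthogonality built into $\zbZpl[]{\ell}{p}$ together with \eqref{eq:CDK}. Applied to $\bfv\in\zbkFHd{\ell}$, condition \eqref{eq:CDK} supplies $\bfz_{p_n}\in\zbWpl[]{\ell}{p_n}\cap\zbkFHd{\ell}$ with $\bfz_{p_n}\to\bfv$ in $\FLtwo{\ell}$. As $\bfv_{p_n}\in\zbZpl[]{\ell}{p_n}$ is $\FLtwo{\ell}$-orthogonal to every discrete closed form in the same space, $\SPN{\bfv_{p_n}}{\bfz_{p_n}}=0$. Writing
\[
  \SPN{\bfv}{\bfz_{p_n}} = \SPN{\bfv-\bfv_{p_n}}{\bfz_{p_n}} + \SPN{\bfv_{p_n}}{\bfz_{p_n}}
  = \SPN{\bfv-\bfv_{p_n}}{\bfz_{p_n}}
\]
and noting that $\bfz_{p_n}$ is bounded while $\bfv-\bfv_{p_n}\to 0$ in $\FLtwo{\ell}$, the right-hand side tends to $0$; on the other hand $\SPN{\bfv}{\bfz_{p_n}}\to\NLtwo{\bfv}^2=1$. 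The resulting identity $1=0$ is the sought contradiction.

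The step demanding the most care is the passage to the limit in the second paragraph, namely verifying that the $\FLtwo{\ell}$-limit genuinely belongs to the closed, zero-trace kernel $\zbkFHd{\ell}$; this rests on $\extd[\ell]$ being a closed unbounded operator and on $\zbFHd{\ell}$ being closed in the graph norm, so that weak information about traces survives in the limit. The remaining estimates are routine. I note that \eqref{eq:CAS} plays no role here: only \eqref{eq:CDK} and discrete compactness enter, which is precisely why this lemma is the natural companion of Lemma \ref{lem:CAS+DCP->CHN}.
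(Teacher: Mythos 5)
Your proof is correct and follows essentially the same route as the paper's: argue by contradiction with normalized $\bfv_{p}$, use the discrete compactness property to extract an $\FLtwo{\ell}$-convergent subsequence whose limit is a nonzero closed form in $\zbkFHd{\ell}$, then combine \eqref{eq:CDK} with the $\Ltwo$-orthogonality defining $\zbZpl[]{\ell}{p}$ to reach a contradiction. The only cosmetic difference lies in the last step: the paper contradicts $\NLtwo{\bfv_p-\bfz_p}\to 0$ with the Pythagorean identity $\NLtwo{\bfv_p-\bfz_p}^2=\NLtwo{\bfv_p}^2+\NLtwo{\bfz_p}^2\ge 1$, whereas you route the same orthogonality through the inner products $\SPN{\bfv}{\bfz_{p_n}}$, which tend simultaneously to $0$ and to $\NLtwo{\bfv}^2=1$; the two arguments are equivalent.
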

\begin{proof}
Assume that \eqref{eq:DFI} does not hold. Then there exists a subsequence $\N'$ of $\N$ and a sequence
$(\bfv_p)_{p\in\N'}$ with $\bfv_p\in\zbZpl[]{\ell}{p}$ satisfying 
\begin{gather}
\label{eq:notDFI}
  \NLtwo{\bfv_p}=1 \quad\mbox{ and }\; 
  \lim_{p\to\infty} \NLtwo{\extd[\ell]\bfv_p} =0\,.
\end{gather}
The discrete compactness property can be applied to this sequence and gives a subsequence converging in $\FLtwo{\ell}$ to some $\bfz\in\FLtwo{\ell}$. 
From \eqref{eq:notDFI} follows that the convergence actually takes place in $\zbFHd{\ell}$ and that $\bfz\in\zbkFHd{\ell}$. 
Therefore the approximation property \eqref{eq:CDK} provides us with a sequence $(\bfz_p)$ with $\bfz_p\in\zbWpl[]{\ell}{p}\cap\zbkFHd{\ell}$ 
that converges in $\FLtwo{\ell}$ to $\bfz$. Hence for the subsequence we find
\[
  \NLtwo{\bfv_p-\bfz_p}\le \NLtwo{\bfv_p-\bfz} + \NLtwo{\bfz-\bfz_p} \to 0\,.
\]
But $\bfv_p\in\zbZpl[]{\ell}{p}$ and $\bfz_p\in\zbWpl[]{\ell}{p}\cap\zbkFHd{\ell}$  are $\Ltwo$-orthogonal, hence for all $p$
\[
  \NLtwo{\bfv_p-\bfz_p}^2 = \NLtwo{\bfv_p}^2 + \NLtwo{\bfz_p}^2 \ge 1\,,
\]
which leads to a contradiction.
\end{proof}

To summarize, Lemmas \ref{lem:CAS+DCP->CHN} and \ref{lem:CDK+DCP->DFI} together prove the following result. 

\begin{theorem}
\label{thm:spurfree}
If the completeness of approximating subspaces \eqref{eq:CAS}, the completeness of discrete kernels \eqref{eq:CDK} and the discrete compactness property hold, then \eqref{eq:evp_p} provides a spectrally correct, spurious-free approximation of the eigenvalue problem \eqref{eq:evp}.
\end{theorem}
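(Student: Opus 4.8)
The plan is to assemble the theorem directly from the two lemmas just proved, feeding their conclusions into the abstract spectral approximation theory of \cite{DNR78a}. Since the substantive analytic work is already encapsulated in Lemmas~\ref{lem:CAS+DCP->CHN} and~\ref{lem:CDK+DCP->DFI}, what remains is to check that their outputs are exactly the hypotheses demanded by that framework and to split the claim into its two halves: \emph{spectral correctness} and the \emph{spurious-free} property. Throughout I would work with the operators $A$ and $A_p$ of \eqref{eq:A&A_p}, which are self-adjoint and nonnegative, and exploit the equivalence \eqref{eq:evpA} to pass back and forth between eigenpairs of $A,A_p$ and those of \eqref{eq:evp}, \eqref{eq:evp_p}.

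For spectral correctness I would invoke Lemma~\ref{lem:CAS+DCP->CHN}: the hypotheses (CAS) and discrete compactness yield the uniform graph-norm convergence \eqref{eq:CHN} of $A_p$ to $A$ on the unit sphere of $\zbWpl[]{\ell}{p}$. This is precisely the convergence condition (condition P1 of \cite{DNR78a}) under which \cite[Theorems 2,4,5,6]{DNR78a} apply. Those theorems then give that every nonzero eigenvalue of $A$ is the limit of eigenvalues of $A_p$ with correct multiplicities and that the associated eigenspaces converge in gap; via \eqref{eq:evpA} this transfers verbatim to the positive eigenpairs of \eqref{eq:evp} and \eqref{eq:evp_p}.

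For the spurious-free property I would use Lemma~\ref{lem:CDK+DCP->DFI}. Let $\omega\neq0$ be a discrete eigenvalue of \eqref{eq:evp_p} with eigenfunction $\bfu_p$. Testing \eqref{eq:evp_p} against any discrete closed form $\bfz_p\in\zbWpl[]{\ell}{p}\cap\zbkFHd{\ell}$ makes the left-hand side vanish, since $\extd[\ell]\bfz_p=0$; as $\omega\neq0$ this forces $\SPN{\bfu_p}{\bfz_p}=0$, i.e. $\bfu_p\in\zbZpl[]{\ell}{p}$. The discrete Friedrichs inequality \eqref{eq:DFI} then gives $\omega^2\NLtwo{\bfu_p}^2=\NLtwo{\extd[\ell]\bfu_p}^2\ge\alpha^2\NLtwo{\bfu_p}^2$, hence $\omega^2\ge\alpha^2$. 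With $\varepsilon_0:=\alpha^2>0$ independent of $p$, every discrete eigenvalue below $\varepsilon_0$ must vanish, which is exactly the spurious-free property.

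Combining the two parts with the definitions recalled in Section~\ref{sec:discrete-compactness}, a spectrally correct approximation whose positive discrete eigenvalues stay uniformly bounded away from $0$ yields the convergence \eqref{eq:conveig} together with the corresponding gap convergence of the eigenspaces, completing the proof. The step I expect to require the most care is the invocation of \cite{DNR78a}: one must verify that the non-compact self-adjoint operator $A$, with its infinite-dimensional kernel accumulating the spectrum at $\omega=0$, genuinely fits the abstract hypotheses there, and in particular that \eqref{eq:CHN}---rather than operator-norm convergence of $A_p$ to $A$, which fails because $A$ is not compact---is the correct and sufficient input. The remaining manipulations, being the elementary orthogonality and coercivity arguments above, are routine.
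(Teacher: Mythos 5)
Your proposal is correct and follows essentially the same route as the paper: the paper's proof is precisely the observation that Lemma~\ref{lem:CAS+DCP->CHN} supplies condition P1 of \cite{DNR78a} (hence spectral correctness via Theorems 2, 4, 5, 6 there), while Lemma~\ref{lem:CDK+DCP->DFI} yields the spurious-free property. Your explicit orthogonality-plus-Friedrichs argument showing that every nonzero discrete eigenvalue satisfies $\omega^2\ge\alpha^2$ is exactly the step the paper dismisses as ``easy to see,'' so you have simply filled in that detail correctly.
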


\begin{remark}
The main focus of this section is on the \emph{convergence} of the eigenvalues and
the eigenfunctions of problem~\eqref{eq:evp_p} to those
of~\eqref{eq:evp}. On the other hand, when considering concrete applications
it is crucial to investigate the \emph{order} of convergence. In order to do so,
several strategies are available. A straightforward approach which well fits
the theory summarized in this section makes use of the results from
\cite{DNR78b}. Theorem~1 of~\cite{DNR78b} states in this particular situation
that the error in the eigenfunctions (measured as usual by the gap of Hilbert
spaces) is bounded by the best approximation, and Theorem~3(c) of~\cite{DNR78b}
states that the eigenvalues achieve double order of convergence
since our problem is symmetric. An alternative approach makes use of the
equivalence of problems~\eqref{eq:evp} and~\eqref{eq:evp_p} with suitable
mixed formulations~\cite[Part 4]{BOF10}; in this case an estimate of the
order of convergence can be achieved by the standard Babu\v ska--Osborn
theory for the spectral approximation of compact operators applied to the
mixed formulations~\cite[Theorems 13.8, 13.10, 14.9, 14.11]{BOF10}.
\end{remark}

\section{Abstract framework implying discrete compactness}
\label{sec:generic-assumptions}

In this section we fix a degree of differential forms
\[
   \ell\in\{1,\ldots,d-1\},
\]
and we formulate a set of hypotheses which allow us to prove the
discrete compactness property. These hypotheses are organized in three
groups:
\begin{enumerate}
\item standard assumptions related to the finite element spaces $\zbWpl[]{\ell}{p}$
  (Sect.~\ref{sec:discr-diff-forms-1}),
\item assumptions on the existence and key properties of ``lifting operators''
  (Sect.~\ref{sec:local-lifting}),
\item hypotheses on projections onto $\zbWpl[]{\ell}{p}$ complying
  with the commuting diagram property and satisfying an approximation
  property (Sect.~\ref{sec:local-projectors}).
\end{enumerate}
To state these assumptions we have to introduce intermediate spaces $X$ and $S$ of more regular forms
\[
\zbWpl[]{\ell}{p}\subset\HX{\ell}\subset\zbFHd{\ell}
\quad\mbox{and}\quad
\zbWpl[]{\ell-1}{p}\subset\HS{\ell-1}\subset\zbFHd{\ell-1}\;,
\]
allowing compact embedding arguments and precise notions of continuity of lifting and
projection operators. 

\subsection{Discrete spaces}
\label{sec:discr-diff-forms-1}

Our focus is on finite element spaces. For the sake of simplicity, we restrict
ourselves to polyhedral Lipschitz domains $\Dom$. We assume that the finite
dimensional trial and test spaces $\zbWpl[]{\ell}{p}$, $p\in\bbN$, are based on a
\emph{fixed} finite partition $\mesh$ of $\Dom$, composed of elements (cells) $K$:
\begin{equation*}
  \overline{\Dom} = \bigcup\limits_{K\in\mesh}\overline{K}\quad,\quad
  K\cap K'=\emptyset\;\text{, if }K\not=K',\;K,K'\in\mesh\;.
\end{equation*}%

For a cell $K\in\mesh$, let $\Faces[K]{m}$ designate the set of $m$-dimensional
facets of $K$: for $m=0$ these are the vertices, for $m=1$ the edges, for $m=d-1$ the
faces, and $\Faces[K]{d} = \{K\}$.

We take for granted that the discrete
spaces $\zbWpl[]{\ell}{p}$ can be assembled from local contributions in the sense
that for each mesh cell $K\in\mesh$ there is a space $\Wpl[K]{\ell}{p}\subset
\FCinf[\overline{K}]{\ell}$ of smooth $\ell$-forms on $K$, such that
\begin{equation}
  \label{eq:Wlploc}
  \zbWpl[]{\ell}{p} = \zbWpl[\mesh]{\ell}{p} 
  := \big\{\bfv\in\zbFHd{\ell}:\,\bfv\on{K} \in \Wpl[K]{\ell}{p}\;\forall K\in\mesh\big\}\;.
\end{equation}
In other words, $\zbWpl[]{\ell}{p}$ can be defined by specifying the local spaces 
$\Wpl[K]{\ell}{p}$ and requiring the continuity of traces across inter-element
boundaries as well as boundary conditions on $\partial\Dom$.

In the same fashion, we introduce a corresponding family
$\zbWpl[]{\ell-1}{p}\subset \zbFHd{\ell-1}$ of spaces of discrete
$(\ell-1)$-forms. We will see later on that as a consequence of further
hypotheses, the local spaces $\Wpl[K]{\ell-1}{p}$ and
$\Wpl[K]{\ell}{p}$ satisfy an exact sequence property.

\subsection{Spaces of more regular forms}
\label{sec:regularity}

We introduce a Hilbert space $\HX{\ell}\subset\zbFHd{\ell}$ that
captures the extra regularity that distinguishes $\ell$-forms in the
space $\zbFXd{\ell}$. We can think of this space as a space of ``more
regular'' $\ell$-forms on $\Dom$.

\ass{ass:reg}{\ \\
The space $\zbFXd{\ell}$  defined in \eqref{eq:zbY} is continuously embedded in $\HX{\ell}$.
}
\noindent
This means that with $C>0$ depending only on $\Dom$
\begin{equation}
  \label{eq:reg}
  \NHX{\ell}{\bfu} \leq C \NFHd{\ell}{\bfu}\quad\forall 
  \bfu \in \zbFXd{\ell}\;.
\end{equation}
On the other hand, $\HX{\ell}$ has to be small enough to maintain the compact
embedding satisfied by $\zbFXd{\ell}$, \textit{cf.} Thm.~\ref{thm:compemb}.

\ass{ass:compemb}{
The space $\HX{\ell}$ is compactly embedded in $\FLtwo{\ell}$.
}

As with the discrete spaces, the spaces $\HX{\ell}$ are built from local
contributions and will therefore depend on the mesh $\mesh$. We assume that for
each mesh cell $K\in\mesh$ there are Hilbert spaces $\HX[K]{\ell}$ so that:
\begin{equation}
  \label{eq:XK}
  \HX{\ell} =
  \big\{\bfv\in\zbFHd{\ell}:\,\bfv\on{K} \in \HX[K]{\ell}\ \ \forall K\in\mesh\big\}\,,
\end{equation}
and, in addition, the norm of $\HX{\ell}$ is defined through local
contributions:
\begin{equation}
  \label{eq:NHXloc}\mbox{}\hspace{-0.5em}
  \NHX{\ell}{\bfu}^{2} = \NFHd{\ell}{\bfu}^{2} + \sum\limits_{K\in\mesh}
  \NHX[K]{\ell}{\bfu\on{K}}^{2}\;.
\end{equation}
Finally, the local spaces have to be large enough to contain the discrete forms
for any value of $p$:
\begin{equation}
  \label{eq:Xdisc}
  \Wpl[K]{\ell}{p} \subset 
  \HX[K]{\ell}\;.
\end{equation}

Correspondingly, we introduce a space $\HS{\ell-1} \subset \zbFHd{\ell-1}$ of
``more regular potentials''.  Similar to $\HX{\ell}$, the spaces $\HS{\ell-1}$
are mesh-depen\-dent and allow for a characterization through local Hilbert spaces
$\HS[K]{\ell-1}$, $K\in\mesh$,
\begin{equation}
  \label{eq:SK}
  \HS{\ell-1} =
  \big\{\psibf\in\zbFHd{\ell-1}:\,
  \psibf\on{K} \in \HS[K]{\ell-1}\ \ \forall K\in\mesh\big\}\;.
\end{equation}
They are endowed with the norm
\begin{equation}
  \label{eq:NHSloc}
  \NHS{\ell-1}{\phibf}^{2} = \NFHd{\ell-1}{\phibf}^{2} + \sum\limits_{K\in\mesh}
  \NHS[K]{\ell-1}{\phibf\on{K}}^{2}\;.
\end{equation}
The local spaces are large enough to contain the local discrete potential
spaces:
\begin{equation}
  \label{eq:Sdisc}
  \Wpl[K]{\ell-1}{p}  \subset 
  \HS[K]{\ell-1}\;.
\end{equation}

\noindent
The following assumption establishes the connection between 
$\HX{\ell}$ and $\HS{\ell-1}$.

\ass{ass:Smax}{%
   The exterior derivative maps $\HS{\ell-1}$ continuously into $\HX{\ell}$:
\begin{equation*}
   \HS{\ell-1} \subset \{\phibf\in\zbFHd{\ell-1}:\extd[\ell-1]\phibf\in\HX{\ell}\},
\end{equation*}
 and the image is maximal:
\begin{equation*}
   \extd[\ell-1] \HS{\ell-1} = \extd[\ell-1] \zbFHd{\ell-1} \cap \HX{\ell}.
\end{equation*}}

To conclude this subsection, note that in the case of an element $K$ touching
the boundary $\partial\Dom$, like for the discrete spaces $\Wpl[K]{\ell}{p}$ and
$\Wpl[K]{\ell-1}{p}$, the local spaces $\HX[K]{\ell}$ and $\HS[K]{\ell-1}$ are
not obliged to comply with any boundary conditions.

\subsection{Local liftings}
\label{sec:local-lifting}

A pair of linear mappings $\Poinc_{k,K}:\FCinf[K]{k}\mapsto\FCinf[K]{k-1}$,
$k=\ell,\,\ell+1$, is called a \emph{lifting operator} of degree $\ell$ if it
fulfills
\begin{equation}
\label{eq:lift}
  \extd[\ell-1]\circ\,\Poinc_{\ell,K}+\Poinc_{\ell+1,K}\circ\extd[\ell] 
  = \Op{Id}_{\ell}\;.
\end{equation}
This relation characterizes a ``contracting homotopy'' of the de Rham complex \cite[Section 5.1.2]{AFW09}.

Besides this algebraic relationship, our approach hinges on smoothing properties
of the lifting operators, expressed by means of the local spaces
$\HS[K]{\ell-1}$ of more regular potentials and $\HX[K]{\ell}$ of more regular
forms.  The next assumption summarizes the continuity expected from the lifting
operator.

\ass{ass:lift}{%
  For every $K\in\mesh$ there is a lifting operator $(\Poinc_{\ell,K},\Poinc_{\ell+1,K})$
  whose components can be extended to continuous
  mappings 
\begin{equation*}
  \Poinc_{\ell+1,K}:\FLtwo[K]{\ell+1}\mapsto \HX[K]{\ell} \quad \mbox{and}\quad
  \Poinc_{\ell,K}:\HX[K]{\ell}\mapsto\HS[K]{\ell-1}\,,
\end{equation*}
and thus identity \eqref{eq:lift} holds on $\HX[K]{\ell}$.
}%

As a consequence, for each cell $K\in\mesh$, we have the exact sequence
\begin{equation}
\label{eq:Sex}
    \begin{CD}
       \HS[K]{\ell-1} @>{\extd[\ell-1]}>> \HX[K]{\ell} @>{\extd[\ell]}>> 
       \FLtwo[K]{\ell+1}.
    \end{CD}
\end{equation}

Finally, the local liftings have to be compatible with the local spaces of discrete
differential forms:

\ass{ass:Wlift}{%
  The local operators $\Poinc_{\ell+1,K}$, when applied to exact local discrete
  $(\ell+1)$-forms, yield local discrete $\ell$-forms, \emph{i.e.},
  \begin{equation*}
    \Poinc_{\ell+1,K}\circ \extd[\ell] : \;\;\Wpl[K]{\ell}{p} \to \Wpl[K]{\ell}{p}\;.
  \end{equation*}}

\subsection{Local projectors}
\label{sec:local-projectors}

As usual in methods based on discrete commuting diagrams we need projection operators
$\LPO{k}{p}$ onto discrete spaces for $(\ell-1)$-forms and $\ell$-forms. For degree
$\ell-1$, our local spaces $\HS[K]{\ell-1}$ of more regular potentials can play the
role of domains for the projectors $\LPO{\ell-1}{p}$. For the degree $\ell$, by
generalization of what we actually need in the case of dimension $d=2$ and $d=3$ for
Maxwell, we define our projectors $\LPO{\ell}{p}$ on smaller spaces than
$\HX[K]{\ell}$. We denote these new spaces by $\HS[K]{\ell}$ and require that they
contain for all $p$ the $p$-dependent subspaces
\begin{equation}
\label{eq:XptildeK}
   \HXt[K]{\ell}{p} = \{\bfu\in \HX[K]{\ell} : \extd[\ell] \bfu \in \extd[\ell]\Wpl[K]{\ell}{p}\}\,.
\end{equation}
On the same model as \eqref{eq:SK}-\eqref{eq:NHSloc}, we define the corresponding global spaces $\HS{\ell}$ and
\begin{equation}
\label{eq:Xptilde}
 \HXt{\ell}{p} = \{\bfu\in \HX{\ell} : \extd[\ell] \bfu \in \extd[\ell]\Wpl[]{\ell}{p}\}
\end{equation}
and we have the continuous embeddings
\begin{equation}
\label{eq:XSX}
   \HXt{\ell}{p}  \contemb \HS{\ell}  \contemb \HX{\ell} \;.
\end{equation}

\ass{ass:lp}{
There are \emph{local} continuous linear projections
\[
  \LPO{\ell-1}{p}:\HS[K]{\ell-1}\mapsto\Wpl[K]{\ell-1}{p}
  \quad\mbox{and}\quad
  \LPO{\ell}{p}:\HS[K]{\ell}\mapsto\Wpl[K]{\ell}{p}
\]
for all mesh cells $K\in\mesh$.
}

The standard commuting diagram property is as follows.

\ass{ass:cdp}{The projectors $\LPO{\ell-1}{p}$ and $\LPO{\ell}{p}$ are compatible
  with the exterior derivative in the sense that the diagram
\begin{equation*}
    \begin{CD}
      \HS[K]{\ell-1} @>{\extd[\ell-1]}>> \HS[K]{\ell} \\
      @V{\LPO[K]{\ell-1}{p}}VV  @VV{\LPO[K]{\ell}{p}}V \\
      \Wpl[K]{\ell-1}{p} @>{\extd[\ell-1]}>> \Wpl[K]{\ell}{p}\;,
    \end{CD}
\end{equation*}
commutes for every $K\in\mesh$.}

Let us note that,
as a consequence of Assumptions \ref{ass:lift} and \ref{ass:cdp},
we find that the sequence
\begin{equation*}
     \begin{CD}
        \Wpl[K]{\ell-1}{p} @>{\extd[\ell-1]}>> \Wpl[K]{\ell}{p} @>{\extd[\ell]}>> 
        \extd[\ell]\big(\Wpl[K]{\ell}{p}\big) 
     \end{CD}
\end{equation*}
is exact.

Besides, the local projections acting on $(\ell-1)$-forms are supposed to enjoy a crucial
approximation property using the Hilbert space norms $\NHS[K]{\ell-1}{\cdot}$.

\ass{ass:locapprox}{%
  There is a function $\varepsilon_{\ell-1}\!:\bbN\mapsto
  \bbR^{+}$ with $\lim\limits_{p\to\infty}\varepsilon_{\ell-1}(p) = 0$ so that
  \begin{equation*}
    \NFLtwo[K]{\ell}{\extd[\ell-1](\phibf-\LPO{\ell-1}{p}\phibf)} 
    \leq \varepsilon_{\ell-1}(p)
    \NHS[K]{\ell-1}{\phibf}\quad\forall \phibf\in \HS[K]{\ell-1}\;.
  \end{equation*}}

Finally we assume for the projections $\LPO{\ell}{p}$ a natural condition of
conformity: For all $\bfu\in \HXt[K]{\ell}{p}$
\begin{equation}
  \label{eq:LPO}
  \trace{F}\bfu = 0 \quad\Rightarrow\quad
  \trace{F}\LPO{\ell}{p}\bfu = 0 \quad
   \forall F\in\Faces[K]{m}\;,\quad \ell\leq m \leq d\,,
\end{equation}
and the corresponding condition for the projections $\LPO{\ell-1}{p}$.
This makes it possible to define \emph{global} linear projections
\[
   \PO{\ell}{p}:\HS{\ell} \mapsto \zbWpl[]{\ell}{p}
   \quad\mbox{and}\quad
   \PO{\ell-1}{p}:\HS{\ell} \mapsto \zbWpl[]{\ell-1}{p}
\]
by patching together the local operators 
\begin{equation}
  \label{eq:POdef}
  (\PO{\ell}{p}\bfu)\on{K} := \LPO{\ell}{p}(\bfu\on{K})
  \quad\mbox{and}\quad
  (\PO{\ell-1}{p}\phibf)\on{K} := \LPO{\ell-1}{p}(\phibf\on{K})
  \quad\forall K\in\mesh\,.
\end{equation}
As a consequence of Assumption \ref{ass:cdp} and \eqref{eq:POdef}, the global projectors
$\PO{\ell-1}{p}$ and $\PO{\ell}{p}$ inherit the global \emph{commuting diagram property}
\begin{equation}
  \label{eq:globalcdp}
  \begin{CD}
    \HS{\ell-1} @>{\extd[\ell-1]}>> \HS{\ell} \\
    @V{\PO{\ell-1}{p}}VV  @VV{\PO{\ell}{p}}V \\
    \zbWpl[]{\ell-1}{p} @>{\extd[\ell-1]}>> \zbWpl[]{\ell}{p}.
  \end{CD}
\end{equation}

\subsection{Proof of the discrete compactness property}
\label{sec:abstract-proof}

The estimate of Assumption \ref{ass:locapprox} on ``potentials" carries over
to $\ell$-forms with a discrete exterior derivative, that
is, the elements of the space $\HXt{\ell}{p}$, see \eqref{eq:Xptilde}.

\begin{lemma}{\rm (Global projection error estimate)}
  \label{lem:1}
  Making Assumptions \ref{ass:lift} through \ref{ass:locapprox}, the estimate
  \begin{equation*}
    \NFLtwo{\ell}{\bfu-\PO{\ell}{p}\bfu} \leq C\varepsilon_{\ell-1}(p)
    \NHX{\ell}{\bfu}\quad\forall \bfu\in \HXt{\ell}{p}
  \end{equation*}
  holds true, with a constant $C>0$ independent of $p$.
\end{lemma}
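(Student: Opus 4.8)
The plan is to reduce the global $L^2$ estimate to local estimates, one per mesh cell, and then to transfer the available approximation property, which Assumption~\ref{ass:locapprox} only supplies for the projector on $(\ell-1)$-forms, to the $\ell$-forms via the lifting operator and the commuting diagram. Since both the norm on $\HX{\ell}$ (see \eqref{eq:NHXloc}) and the global projector (see \eqref{eq:POdef}) are assembled cell by cell, it suffices to prove that for every $K\in\mesh$
\[
  \NFLtwo[K]{\ell}{\bfu\on{K}-\LPO{\ell}{p}(\bfu\on{K})} \le C\,\varepsilon_{\ell-1}(p)\,\NHX[K]{\ell}{\bfu\on{K}},
\]
and then to sum the squares over $K$, using $\sum_{K}\NHX[K]{\ell}{\bfu\on{K}}^{2}\le\NHX{\ell}{\bfu}^{2}$ from \eqref{eq:NHXloc}.

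For the local estimate I would fix $K$, write $\bfu_{K}:=\bfu\on{K}\in\HXt[K]{\ell}{p}$, and apply the contracting homotopy \eqref{eq:lift} to split
\[
  \bfu_{K}=\extd[\ell-1]\phibf+\bfw,\qquad
  \phibf:=\Poinc_{\ell,K}\bfu_{K},\quad \bfw:=\Poinc_{\ell+1,K}\extd[\ell]\bfu_{K}.
\]
By Assumption~\ref{ass:lift} the potential satisfies $\phibf\in\HS[K]{\ell-1}$. The key observation is that the exact piece $\bfw$ is in fact \emph{discrete}: because $\bfu\in\HXt{\ell}{p}$, one has locally $\extd[\ell]\bfu_{K}=\extd[\ell](\bfv_{p}\on{K})$ for some $\bfv_{p}\in\zbWpl[]{\ell}{p}$, whence $\bfw=\Poinc_{\ell+1,K}\extd[\ell](\bfv_{p}\on{K})\in\Wpl[K]{\ell}{p}$ by Assumption~\ref{ass:Wlift}. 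Consequently $\bfw\in\HXt[K]{\ell}{p}\subset\HS[K]{\ell}$ (so it lies in the domain of $\LPO{\ell}{p}$), and since $\LPO{\ell}{p}$ is a projection onto $\Wpl[K]{\ell}{p}$ we get $\LPO{\ell}{p}\bfw=\bfw$.

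Applying $\LPO{\ell}{p}$ to the splitting and cancelling $\bfw$ leaves
\[
  \bfu_{K}-\LPO{\ell}{p}\bfu_{K}
  =\extd[\ell-1]\phibf-\LPO{\ell}{p}\extd[\ell-1]\phibf .
\]
The commuting diagram of Assumption~\ref{ass:cdp}, valid on $\HS[K]{\ell-1}\ni\phibf$, turns the right-hand side into an expression governed by the $(\ell-1)$-projector, giving
\[
  \bfu_{K}-\LPO{\ell}{p}\bfu_{K}
  =\extd[\ell-1]\big(\phibf-\LPO{\ell-1}{p}\phibf\big).
\]
Taking $L^{2}$-norms and invoking Assumption~\ref{ass:locapprox} bounds this by $\varepsilon_{\ell-1}(p)\NHS[K]{\ell-1}{\phibf}$, and the continuity of $\Poinc_{\ell,K}:\HX[K]{\ell}\to\HS[K]{\ell-1}$ from Assumption~\ref{ass:lift} bounds $\NHS[K]{\ell-1}{\phibf}=\NHS[K]{\ell-1}{\Poinc_{\ell,K}\bfu_{K}}$ by $C\,\NHX[K]{\ell}{\bfu_{K}}$. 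This yields the local estimate, and summation over $K$ closes the argument.

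The step I expect to demand the most care is establishing that the exact part $\bfw$ is fixed by the projector: this is where one must chain the hypothesis $\bfu\in\HXt{\ell}{p}$ down to the local level, use Assumption~\ref{ass:Wlift} to land $\bfw$ in $\Wpl[K]{\ell}{p}$, and verify that $\bfw$ lies in the domain $\HS[K]{\ell}$ of $\LPO{\ell}{p}$ (which follows from $\Wpl[K]{\ell}{p}\subset\HXt[K]{\ell}{p}\subset\HS[K]{\ell}$, the latter holding by construction of $\HS[K]{\ell}$, see \eqref{eq:XptildeK}). Everything else is a straightforward concatenation of the stated assumptions; the whole point of the lifting is precisely to trade the unavailable approximation estimate for $\ell$-forms against the available one for $(\ell-1)$-forms.
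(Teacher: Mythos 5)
Your proof is correct and follows essentially the same route as the paper's: localize via \eqref{eq:NHXloc} and \eqref{eq:POdef}, split $\bfu\on{K}$ with the lifting identity \eqref{eq:lift}, use Assumption~\ref{ass:Wlift} to see that the exact part $\Poinc_{\ell+1,K}\extd[\ell]\bfu$ is discrete and hence fixed by $\LPO[K]{\ell}{p}$, apply the commuting diagram of Assumption~\ref{ass:cdp}, and conclude with Assumption~\ref{ass:locapprox} and the continuity of $\Poinc_{\ell,K}$. The only detail worth making explicit (as the paper does) is that the constant in the local estimate is uniform over the cells simply because the mesh $\mesh$ is finite and fixed.
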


\begin{proof}
  Pick any $\bfu\in \HXt{\ell}{p}$. The locality of the projector $\PO{\ell}{p}$,
  \textit{cf.} \eqref{eq:POdef}, and \eqref{eq:NHXloc} allow purely local
  considerations.  Single out one cell $K\in\mesh$, still write
  $\bfu=\bfu\on{K}\in\HXt[K]{\ell}{p}$, and split $\bfu$ on $K$ using \eqref{eq:lift}
  from Assumption~\ref{ass:lift}:
\begin{equation}
    \label{eqo:8}
    \bfu = \extd[\ell-1]\Poinc_{\ell,K}\bfu + \Poinc_{\ell+1,K}\extd[\ell] \bfu = 
    \extd[\ell-1]\phibf + \Poinc_{\ell+1,K}\extd[\ell]\bfu \;.
\end{equation}
with $\phibf := \Poinc_{\ell,K} \bfu$. The continuity of $\Poinc_{\ell,K}$ from
Assumption \ref{ass:lift} reveals that
\begin{equation}
\label{eqo:9}
    \NHS[K]{\ell-1}{\phibf} \leq C \NHX[K]{\ell}{\bfu} \;,
\end{equation}
where here and below $C$ will denote constants (possibly different at different
occurrences) which depend neither on $\bfu$ nor on $p$. 

Thanks to identity \eqref{eqo:8} and the commuting diagram property 
from Assumption \ref{ass:cdp}, we have
\begin{equation}
\label{eqo:8b}
    \LPO[K]{\ell}{p}\bfu = \extd[\ell-1]\LPO[K]{\ell-1}{p}\phibf + 
    \LPO[K]{\ell}{p}\Poinc_{\ell+1,K}\extd[\ell]\bfu\;.
\end{equation}
Recall that $\bfu\in \HXt[K]{\ell}{p}$ belongs to the domain of $\LPO[K]{\ell}{p}$ by
Assumption \ref{ass:lp}.  Further, as $\bfu\in\HXt[K]{\ell}{p}$, from Assumption
\ref{ass:Wlift} we infer that
\begin{equation}
\label{eqo:73}
    \Poinc_{\ell+1,K} \extd[\ell]\bfu \in \Wpl[K]{\ell}{p}\;.
\end{equation}
Thus, owing to the identities \eqref{eqo:8}, \eqref{eqo:8b} and the projector
property of $\LPO[K]{\ell}{p}$, the task is reduced to an interpolation estimate for
$\LPO[K]{\ell-1}{p}$:
\begin{equation}
\label{eqo:10}
    (\Op{Id}-\LPO[K]{\ell}{p})\bfu =
    \extd[\ell-1] (\Op{Id}-\LPO[K]{\ell-1}{p})\phibf + 
    \underbrace{(\Op{Id}-\LPO[K]{\ell}{p})\Poinc_{\ell+1,K}\extd[\ell]\bfu}_{=0\;
    \text{by \eqref{eqo:73}}}\;.
\end{equation}
As a consequence, invoking Assumption \ref{ass:locapprox},
\begin{multline}
\label{eqo:11}
    \NFLtwo[K]{\ell}{(\Op{Id}-\LPO[K]{\ell}{p})\bfu} 
    \overset{\text{\eqref{eqo:10}}}{=}
    \NFLtwo[K]{\ell}{\extd[\ell-1](\Op{Id}-\LPO[K]{\ell-1}{p})\phibf} \\ \leq
    \varepsilon_{\ell-1}(p)\NHS[K]{\ell-1}{\phibf} 
    \overset{\text{\eqref{eqo:9}}}{\leq} 
    C\varepsilon_{\ell-1}(p) \NHX[K]{\ell}{\bfu}\;,
\end{multline}
which furnishes a local version of the estimate. This estimate is uniform in $K\in\mesh$ because $\mesh$ is finite. Due to \eqref{eq:NHXloc}, squaring
\eqref{eqo:11} and summing over all cells finishes the proof.
\end{proof}

We are now in the position to prove the main result of this section.
\begin{theorem}{\rm (Discrete compactness)}
  \label{thm:main}
  Under Assumptions \ref{ass:reg} through \ref{ass:locapprox}, the discrete
  compactness property of Definition \ref{def:dc} holds for the family
  $\big(\zbWpl[]{\ell}{p}\,\big)_{p\in\mathbb{N}}$ of subspaces of $\zbFHd{\ell}$. 
\end{theorem}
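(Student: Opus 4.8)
The plan is to reduce the discrete compactness property to the compact embedding of $\HX{\ell}$ from Assumption~\ref{ass:compemb} by splitting each $\bfu_p$ into a ``regular'' part lying in a bounded subset of $\HX{\ell}$ and an exact remainder whose $\FLtwo{\ell}$-norm tends to zero. Given a subsequence $\N'$ and a bounded sequence $(\bfu_p)_{p\in\N'}$ with $\bfu_p\in\zbZpl[]{\ell}{p}$, I would first invoke the continuous $\Ltwo$-orthogonal Hodge decomposition
\[
   \bfu_p = \extd[\ell-1]\phibf_p + \bfq_p\,,\qquad \phibf_p\in\zbFHd{\ell-1}\,,\quad \bfq_p\in\zbFXd{\ell}\,,
\]
which is available because $\extd[\ell-1]\zbFHd{\ell-1}$ is closed in $\FLtwo{\ell}$, a standard consequence of the compactness in Theorem~\ref{thm:compemb}. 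Setting $\bfg_p:=\extd[\ell-1]\phibf_p=\bfu_p-\bfq_p$, the orthogonality of the two summands and $\extd[\ell]\bfq_p=\extd[\ell]\bfu_p$ give $\NFHd{\ell}{\bfq_p}\le\NFHd{\ell}{\bfu_p}$, so by Assumption~\ref{ass:reg} the sequence $(\bfq_p)$ is bounded in $\HX{\ell}$. By Assumption~\ref{ass:compemb} it then has a subsequence converging in $\FLtwo{\ell}$, and the task collapses to showing $\NFLtwo{\ell}{\bfg_p}\to0$.

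For the exact remainder I would use that $\bfg_p$ is $\Ltwo$-orthogonal to $\bfq_p\in\zbFXd{\ell}$ by the defining property~\eqref{eq:zbY}, so that for \emph{any} discrete closed form $\bfz_p\in\zbWpl[]{\ell}{p}\cap\zbkFHd{\ell}$,
\[
  \NFLtwo{\ell}{\bfg_p}^2 = \SPN{\bfg_p}{\bfg_p} = \SPN{\bfg_p}{\bfu_p} = \SPN{\bfg_p-\bfz_p}{\bfu_p}\,,
\]
the last equality because $\bfu_p\in\zbZpl[]{\ell}{p}$ is $\Ltwo$-orthogonal to such forms by~\eqref{eq:Zpldef}. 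The decisive choice is $\bfz_p:=\PO{\ell}{p}\bfg_p$. First, $\bfg_p\in\HXt{\ell}{p}$ since $\extd[\ell]\bfg_p=0\in\extd[\ell]\Wpl[]{\ell}{p}$ and $\bfg_p\in\HX{\ell}$, so $\bfg_p$ lies in the domain $\HS{\ell}$ of $\PO{\ell}{p}$ via~\eqref{eq:XSX}. Second, being exact, $\bfg_p\in\extd[\ell-1]\zbFHd{\ell-1}\cap\HX{\ell}=\extd[\ell-1]\HS{\ell-1}$ by the maximality in Assumption~\ref{ass:Smax}; writing $\bfg_p=\extd[\ell-1]\widetilde\phibf_p$ with $\widetilde\phibf_p\in\HS{\ell-1}$ and applying the global commuting diagram~\eqref{eq:globalcdp} gives $\bfz_p=\extd[\ell-1]\PO{\ell-1}{p}\widetilde\phibf_p$, which is exact, hence $\bfz_p\in\zbWpl[]{\ell}{p}\cap\zbkFHd{\ell}$ as required.

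The crucial simplification is to rewrite the projection error through the \emph{regular} part only. Since $\bfu_p$ is already discrete, $\PO{\ell}{p}\bfu_p=\bfu_p$, whence
\[
  \bfg_p-\bfz_p = (\Op{Id}-\PO{\ell}{p})\bfg_p = (\Op{Id}-\PO{\ell}{p})(\bfu_p-\bfq_p) = -(\Op{Id}-\PO{\ell}{p})\bfq_p\,.
\]
Because $\extd[\ell]\bfq_p=\extd[\ell]\bfu_p\in\extd[\ell]\Wpl[]{\ell}{p}$, the form $\bfq_p$ belongs to $\HXt{\ell}{p}$ by~\eqref{eq:Xptilde}, so Lemma~\ref{lem:1} applies to it and yields $\NFLtwo{\ell}{(\Op{Id}-\PO{\ell}{p})\bfq_p}\le C\varepsilon_{\ell-1}(p)\NHX{\ell}{\bfq_p}$. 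Combining the displays and using $\NHX{\ell}{\bfq_p}\le C$ together with the boundedness of $\NFLtwo{\ell}{\bfu_p}$ gives
\[
  \NFLtwo{\ell}{\bfg_p}^2 \le \NFLtwo{\ell}{(\Op{Id}-\PO{\ell}{p})\bfq_p}\,\NFLtwo{\ell}{\bfu_p} \le C\,\varepsilon_{\ell-1}(p)\,,
\]
which tends to $0$ as $p\to\infty$, completing the reduction.

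I expect the main obstacle to be precisely this last manoeuvre. The naive route, namely writing $\bfg_p=\extd[\ell-1]\widetilde\phibf_p$ and estimating through $\NHS{\ell-1}{\widetilde\phibf_p}\le C\NHX{\ell}{\bfg_p}$, fails: $\bfg_p=\bfu_p-\bfq_p$ inherits the $\HX{\ell}$-norm of the \emph{discrete} form $\bfu_p$, which is not controlled uniformly in $p$ (otherwise discrete compactness would be trivial). The point is instead to apply the global projection estimate of Lemma~\ref{lem:1} to the smooth co-closed part $\bfq_p$, whose $\HX{\ell}$-norm \emph{is} bounded by Assumption~\ref{ass:reg}, and to observe that, thanks to $\PO{\ell}{p}\bfu_p=\bfu_p$, the error incurred for $\bfg_p$ coincides exactly with the projection error of $\bfq_p$. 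Verifying the two membership facts that make this work, $\bfq_p\in\HXt{\ell}{p}$ and $\bfg_p\in\extd[\ell-1]\HS{\ell-1}$, is where the definition~\eqref{eq:Xptilde} of $\HXt{\ell}{p}$ and Assumption~\ref{ass:Smax} enter.
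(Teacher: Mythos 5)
Your proposal is correct and takes essentially the same route as the paper: the identical $L^2$-orthogonal splitting of $\bfu_p$ into a co-closed part $\bfq_p\in\zbFXd{\ell}$ (bounded in $\HX{\ell}$ by Assumption~\ref{ass:reg}, hence precompact by Assumption~\ref{ass:compemb}) plus an exact remainder, which is killed in $\FLtwo{\ell}$ by combining the commuting global projection with the error estimate of Lemma~\ref{lem:1} applied to the regular part. The only cosmetic difference is organizational: you exploit the discrete orthogonality directly against $\bfz_p=\PO{\ell}{p}\bfg_p\in\zbWpl[]{\ell}{p}\cap\zbkFHd{\ell}$ (thereby avoiding the paper's auxiliary space $\zbXpl[]{\ell}{p}$ and the inclusion $\zbZpl[]{\ell}{p}\subset\zbXpl[]{\ell}{p}$), whereas the paper phrases the same cancellation as N\'ed\'elec's trick against $\extd[\ell-1]\PO{\ell-1}{p}\widetilde{\phibf}_{p}$; the two computations are algebraically equivalent.
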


\begin{proof}
  The proof resorts to the ``standard policy'' for tackling the problem of discrete
  compactness, introduced by Kikuchi \cite{KIK89,KIK01} for analyzing the $h$-version
  of Whitney-1-forms. It forms the core of most
  papers considering the issue of discrete compactness, see \cite[Thm.~2]{BDC03},
  \cite[Thm.~11]{BCD06}, \cite[Thm.~4.9]{HIP02}, \cite[Thm.~2]{DMS00}, etc.

Let us introduce the discrete analogue of the space $\zbFXd{\ell}$:
\begin{equation}
  \label{eq:Xpldef}
  \zbXpl[]{\ell}{p} := \{\bfv_{p}\in \zbWpl[]{\ell}{p}:\;
  \SPN{\bfv_{p}}{\extd[\ell-1]\psibf_{p}} =  0 \;\;\forall \psibf_p\in \zbWpl[]{\ell-1}{p}\}.
\end{equation}
The space $\zbXpl[]{\ell}{p}$ contains $\zbZpl[]{\ell}{p}$ as a subspace.

We consider a subsequence $\N'$ of $\N$ and a $\FHd{\ell}$-bounded sequence $\left(\bfu_{p}\right)_{p\in\bbN'}$
with members in $\zbZpl[]{\ell}{p}$. Thus $\bfu_p$ belongs in particular to $\zbXpl[]{\ell}{p}$ and the sequence $\left(\bfu_{p}\right)_{p\in\bbN'}$ satisfies
\begin{align}
\label{eqo:12}
    \text{(i)} \quad & \bfu_{p} \in \zbWpl[]{\ell}{p}\;,\\
\label{eqo:13}
    \text{(ii)} \quad & \SPN{\bfu_{p}}{\extd[\ell-1]\psibf_{p}} =  0 
    \quad \forall \psibf_p\in \zbWpl[]{\ell-1}{p}\;,\\
\label{eqo:17}
    \text{(iii)} \quad & \NFHd{\ell}{\bfu_{p}} \leq 1\quad\forall p\in\bbN'\;.
\end{align}  
We have to confirm that it possesses a subsequence that converges in 
  $\FLtwo{\ell}$. 
  
  We start with the $\FLtwo{\ell}$-orthogonal projection of $\bfu_{p}$ into
  $\zbFXd{\ell}$ and parallel to $\extd[\ell-1]\zbFHd{\ell-1}$: let $\widetilde{\bfu}_{p}$
  be the unique vector field in $\zbFHd{\ell}$ with
\begin{align}
\label{eq:98}
   &\widetilde{\bfu}_{p} = \bfu_{p} + \extd[\ell-1]\widetilde{\phibf}_{p},
   \quad \widetilde{\phibf}_{p}\in \zbFHd{\ell-1}\;,\\
\label{eq:99}
   &\SPN{\widetilde{\bfu}_{p}}{\extd[\ell-1]\psibf} = 0 \quad\ 
   \forall \psibf\in \zbFHd{\ell-1}  \;.
\end{align}
Obviously, the latter condition implies 
\begin{equation}
\label{eqo:14}
      \widetilde{\bfu}_{p} \in \zbFXd{\ell}\;.
\end{equation}
Hence, by virtue of Assumption \ref{ass:reg}, the fact that $\extd[\ell]\widetilde{\bfu}_{p} =
\extd[\ell]\bfu_{p}$, and \eqref{eq:Xptilde}, $\widetilde{\bfu}_{p}$ satisfies
\begin{equation}
\label{eq:100}
      \widetilde{\bfu}_{p}\in \HXt{\ell}{p},\quad
      \NHX{\ell}{\widetilde{\bfu}_{p}} \leq C \NFHd{\ell}{\bfu_{p}}\,,
\end{equation}
where $C>0$ does not depend on $p$.

Since 
$\extd[\ell-1]\widetilde{\phibf}_{p}=\widetilde{\bfu}_{p}-\bfu_{p} \in \HX{\ell}$, Assumption~\ref{ass:Smax} implies that we may assume that 
$\widetilde{\phibf}_{p}\in\HS{\ell-1}$.

Thus we can use
N\'ed\'elec's trick \cite{NED80} to obtain
\begin{equation}
\label{eqo:15}
   \begin{aligned}
        \NFLtwo{\ell}{\widetilde{\bfu}_{p}-\bfu_{p}}^{2} & =
        \SPN{\widetilde{\bfu}_{p}-\bfu_{p}}{{\widetilde{\bfu}_{p}-
            \PO{\ell}{p}\widetilde{\bfu}_{p}+
            \PO{\ell}{p}\widetilde{\bfu}_{p}-\bfu_{p}
          }} \\
        & = \SPN{\widetilde{\bfu}_{p}-\bfu_{p}}{{\widetilde{\bfu}_{p}-
            \PO{\ell}{p}\widetilde{\bfu}_{p}}}\;.
   \end{aligned}
\end{equation}
This holds because from \eqref{eq:98} and the projector property of $\PO{\ell}{p}$ we
know
\begin{equation*}
   \PO{\ell}{p}\widetilde{\bfu}_{p} - \bfu_{p} =
   \PO{\ell}{p}\bfu_{p} + \PO{\ell}{p}\extd[\ell-1]\widetilde{\phibf}_{p} - \bfu_{p} =
   \PO{\ell}{p}\extd[\ell-1]\widetilde{\phibf}_{p} \,,
\end{equation*}
and thanks to the commuting diagram property \eqref{eq:globalcdp} 
(deduced from Assumption~\ref{ass:cdp}) combined with
the orthogonality conditions \eqref{eqo:13} and \eqref{eq:99}, we find
\begin{equation}
\label{eq:115}
    \SPN{\widetilde{\bfu}_{p}-\bfu_{p}}{
            \PO{\ell}{p}\widetilde{\bfu}_{p}-\bfu_{p} } =
    \SPN{\widetilde{\bfu}_{p}-\bfu_{p}}{
            \extd[\ell-1] \PO{\ell-1}{p} \widetilde{\phibf}_{p} } = 0\;.
\end{equation}
Hence, appealing to Lemma~\ref{lem:1}, with $C>0$ independent of $p$, we get
\begin{equation}
\label{eqo:16}
      \begin{aligned}
      \NFLtwo{\ell}{\widetilde{\bfu}_{p}-\bfu_{p}} 
        & \leq \NFLtwo{\ell}{\widetilde{\bfu}_{p}-\PO{\ell}{p}\widetilde{\bfu}_{p}} 
        \leq C \varepsilon_{\ell-1}(p)\NHX{\ell}{\widetilde{\bfu}_{p}} \\
        & \!\!\overset{\text{\eqref{eq:100}}}{\leq}
        C \varepsilon_{\ell-1}(p) \NHX{\ell}{\bfu_{p}} 
        \to 0 \quad\text{for }
        p\to\infty\;.
      \end{aligned}
\end{equation}
From \eqref{eq:100} we conclude that the sequence
${(\widetilde{\bfu}_{p})}_{p\in\bbN'}$ is uniformly bounded in $\HX{\ell}$.  By
Assumption \ref{ass:compemb} it has a convergent subsequence in $\FLtwo{\ell}$. Owing
to \eqref{eqo:16}, the same subsequence of ${(\bfu_{p})}_{p\in\mathbb{N}'}$ will
converge in $\FLtwo{\ell}$.
\end{proof}

\subsection{Approximation of the eigenvalue problem}
\label{sec:approx-evp}

As discussed in Section~\ref{sec:discrete-compactness}, the discrete compactness property is the cornerstone of the proof of the convergence of the discrete generalized Maxwell eigenvalue problem \eqref{eq:evp_p}.

\begin{corollary}
\label{cor:approx-evp}
In addition to the hypotheses of Theorem~\ref{thm:main}, namely
Assumptions~\ref{ass:reg} to \ref{ass:locapprox}, assume that 
property \emph{(CAS)} \eqref{eq:CAS} holds and that the space $\HX{\ell}\cap \zbkFHd{\ell}$ is
dense in $\zbkFHd{\ell}$. Then \eqref{eq:evp_p} provides a spectrally correct,
spurious-free approximation of the eigenvalue problem \eqref{eq:evp}.
\end{corollary}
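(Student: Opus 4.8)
The plan is to deduce the corollary from Theorem~\ref{thm:spurfree}, which guarantees a spectrally correct, spurious-free approximation as soon as three ingredients are available: property (CAS) \eqref{eq:CAS}, property (CDK) \eqref{eq:CDK}, and the discrete compactness property. Two of these are already in hand: (CAS) is assumed in the corollary, and Theorem~\ref{thm:main} supplies the discrete compactness property under Assumptions~\ref{ass:reg} through~\ref{ass:locapprox}. Thus the whole task reduces to verifying the completeness of discrete kernels (CDK), and this is exactly where the additional density hypothesis enters.

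To establish (CDK), I would fix a closed form $\bfz\in\zbkFHd{\ell}$ and $\eta>0$. Since on closed forms the graph norm of $\zbFHd{\ell}$ reduces to the $\FLtwo{\ell}$-norm, the assumed density of $\HX{\ell}\cap\zbkFHd{\ell}$ in $\zbkFHd{\ell}$ is density in $L^2$, and it furnishes a closed form $\bfz'\in\HX{\ell}\cap\zbkFHd{\ell}$ with $\NLtwo{\bfz-\bfz'}<\eta/2$. The decisive structural observation is that, being closed, $\bfz'$ satisfies $\extd[\ell]\bfz'=0\in\extd[\ell]\Wpl[]{\ell}{p}$ for every $p$; comparing with \eqref{eq:Xptilde} this places $\bfz'\in\HXt{\ell}{p}\subset\HS{\ell}$ for all $p$, so $\bfz'$ lies in the domain of the global projector $\PO{\ell}{p}$ and, simultaneously, in the scope of Lemma~\ref{lem:1}.

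The hard part, and really the only nonroutine point, is to check that $\PO{\ell}{p}\bfz'$ is a \emph{discrete closed} form, i.e.\ an element of $\zbWpl[]{\ell}{p}\cap\zbkFHd{\ell}$. Since no projector onto $(\ell+1)$-forms has been introduced, closedness cannot be read off from a commuting diagram and must instead be extracted cellwise from the local lifting, exactly as in the proof of Lemma~\ref{lem:1}. On each $K\in\mesh$ the homotopy identity \eqref{eq:lift} combined with $\extd[\ell]\bfz'=0$ gives $\bfz'\on{K}=\extd[\ell-1]\phibf$ with $\phibf:=\Poinc_{\ell,K}\bfz'\on{K}\in\HS[K]{\ell-1}$ by Assumption~\ref{ass:lift}; the commuting diagram property of Assumption~\ref{ass:cdp} then yields $\LPO[K]{\ell}{p}\bfz'\on{K}=\extd[\ell-1]\LPO[K]{\ell-1}{p}\phibf$, which is exact, hence closed on $K$. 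As $\PO{\ell}{p}\bfz'$ is patched together from these local pieces and is conforming in $\zbFHd{\ell}$, its exterior derivative vanishes cell by cell, so $\PO{\ell}{p}\bfz'\in\zbkFHd{\ell}$.

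It then remains only to control the error. Because $\bfz'\in\HXt{\ell}{p}$, Lemma~\ref{lem:1} gives $\NLtwo{\bfz'-\PO{\ell}{p}\bfz'}\le C\varepsilon_{\ell-1}(p)\NHX{\ell}{\bfz'}$, and with $\bfz'$ fixed the right-hand side tends to $0$, so it drops below $\eta/2$ for $p$ large enough. For such a $p$ the form $\PO{\ell}{p}\bfz'$ is an admissible competitor in the infimum defining (CDK), and the triangle inequality gives $\inf_{\bfz_p}\NLtwo{\bfz-\bfz_p}\le\NLtwo{\bfz-\bfz'}+\NLtwo{\bfz'-\PO{\ell}{p}\bfz'}<\eta$. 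This proves (CDK); invoking Theorem~\ref{thm:spurfree} with (CAS), (CDK) and discrete compactness then concludes the argument. Everything apart from the closedness of $\PO{\ell}{p}\bfz'$ is a routine density-plus-projection-error estimate.
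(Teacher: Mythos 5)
Your proposal is correct and follows essentially the same route as the paper's proof: reduce to (CDK) via Theorem~\ref{thm:spurfree} and Theorem~\ref{thm:main}, approximate a general closed form by one in $\HX{\ell}\cap\zbkFHd{\ell}$ (which lies in $\HXt{\ell}{p}$ for every $p$), apply Lemma~\ref{lem:1} for the $\FLtwo{\ell}$-convergence of $\PO{\ell}{p}\bfz'$, and verify closedness of the projected form cellwise from the identity $\LPO[K]{\ell}{p}\bfz' = \extd[\ell-1]\LPO[K]{\ell-1}{p}\Poinc_{\ell,K}\bfz'$ obtained from the lifting and the commuting diagram. The only differences are presentational (your explicit $\eta/2$ bookkeeping versus the paper's ``it suffices to prove (CDK) on the dense subspace''), and you correctly identify, as the paper does, that the closedness of $\PO{\ell}{p}\bfz'$ is the one nontrivial step.
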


\begin{proof}
  We use Theorem~\ref{thm:spurfree} from Section \ref{sec:discrete-compactness}.
  Considering that the discrete compactness property is provided by
  Theorem~\ref{thm:main}, and that we assume the approximation property (CAS)
  \eqref{eq:CAS}, we only need to show the approximation property (CDK) \eqref{eq:CDK},
  which concerns the approximation of closed forms by closed discrete forms.
  
  Since we assumed the density of $\HX{\ell}\cap \zbkFHd{\ell}$ in $\zbkFHd{\ell}$,
  it is sufficient to prove (CDK) for $\bfz\in \HX{\ell}\cap \zbkFHd{\ell}$.
  Such $\bfz$ belongs to $\HXt{\ell}{p}$, and we can therefore apply
  Lemma~\ref{lem:1}, which shows that $\PO{\ell}{p}\bfz\to\bfz$ in $\FLtwo{\ell}$. We
  will have accomplished to show (CDK) with $\bfz_p=\PO{\ell}{p}\bfz$, as
  soon as we show that $\extd[\ell]\bfz_p=0$. Keeping in mind that
  $\bfz_p\in\zbWpl[]{\ell}{p}\subset\FHd{\ell}$, we see that it is sufficient to show
  the local relation $\extd[\ell]\bfz_p=0$ in $K$ for every $K\in\mesh$. This follows
  finally as in \eqref{eqo:8b} in the proof of Lemma~\ref{lem:1}, because
  $\extd[\ell]\bfz=0$ implies
\[
 \LPO[K]{\ell}{p}\bfz = \extd[\ell-1]\LPO[K]{\ell-1}{p} \Poinc_{\ell,K} \bfz\,.
\]
Hence $\extd[\ell]\bfz_p = \extd[\ell]\LPO[K]{\ell}{p}\bfz = 
\extd[\ell]\extd[\ell-1]\LPO[K]{\ell-1}{p} \Poinc_{\ell,K} \bfz = 0$, which ends the proof.
\end{proof}

  \begin{remark}
    The abstract theory developed in this section can be applied to the $h$-version
    of discrete differential forms, if the dependence of the constants on the size of
    the cell $K$ is made explicit by means of scaling arguments. Here, we forgo this
    extra technicality and refer the reader to \cite[Sect.~4.4]{HIP02}.
  \end{remark}

\section{Regularized Poincar\'e lifting}
\label{sec:smooth-poic-lift}

In this section we describe the construction of a local lifting operator
$\Poinc_{\ell}$ that will satisfy Assumptions~\ref{ass:lift} and \ref{ass:Wlift} in
Section~\ref{sec:local-lifting} for suitable spaces $\HX[K]{\ell}$, $\HS[K]{\ell}$
and $\Wpl[K]{\ell}{p}$. We follow the presentation in \cite{CMI08}, where these
operators are analyzed and where it is shown in particular that they are
pseudodifferential operators of order $-1$.

\subsection{Definition}
\label{sec:def-of-Poinc}

We consider a bounded domain $D\subset \R^d$ that is \emph{star-shaped}
with respect to some subdomain $B\subset D$, that is,
\begin{gather}
  \label{eq:starB}
  \forall a\in B,\, x\in D:\quad 
  \{(1-t) a + tx,\; 0<t<1\} \subset D\;.
\end{gather}

For $a\in B$ and $1\le\ell\le d$, we define the \emph{Poincar\'e operator}
$\Poinc_{\ell,a}$, acting on a differential form $\bfu\in \FCinf[D]{\ell}$, by the
path integral
\begin{gather}
\label{eq:Poincla}
 \Poinc_{\ell,a}\bfu(x) = 
 (x-a) \contr \int_0^1 t^{\ell-1}\,\bfu\bigl(a+t(x-a)\bigr)\,dt\,,
   \qquad x\in D\,.
\end{gather}
Here the symbol $\contr$ denotes the contraction (also called ``interior product'')
of the vector field $x\mapsto(x-a)$ with the $\ell$-form $\bfu$.
It is clear that $\Poinc_{\ell,a}$ maps $\FCinf[D]{\ell}$ to $\FCinf[D]{\ell-1}$ and
it has been shown (see \cite{GD04} for proofs in the case $d=3$) that it can be
extended to a bounded operator from $\FLtwo[D]{\ell}$ to $\FLtwo[D]{\ell-1}$.  In
order to define the \emph{regularized Poincar\'e operator} $\Poinc_{\ell}$, we choose
a function
\[
  \theta\in C_0^\infty(\R^d)\,, \quad \supp \theta \subset B\,, \quad
  \int_B\theta(a)\,da=1\,,
\]
and set
\begin{gather}
\label{eq:Poincl}
  \Poinc_{\ell}\bfu(x) = \int_B \theta(a) \Poinc_{\ell,a}\bfu(x)\,da\,.
\end{gather}

\subsection{Regularity}
\label{sec:regularity-of-Poinc}

The substitution
$y=a+t(x-a)$, $\tau=1/(1-t)$ transforms the double integral in \eqref{eq:Poincla}, \eqref{eq:Poincl} into 
\begin{gather}
  \label{eq:104}
  \begin{aligned}
  \Poinc_{\ell}\bfu(x) & = \int\limits_{\mathbb{R}^{d}}\int\limits_{1}^{\infty}
  (\tau-1)^{\ell-1}\tau^{d-\ell}
          \theta\bigl(x+\tau(y-x)\bigr)\, (x-y) \contr\bfu(y) \,d\tau\,dy \\
  & = \int\limits_{\mathbb{R}^{d}}  k(y,y-x) \contr\bfu(y) \,dy\;,
  \end{aligned}
\end{gather}
where the kernel $k(y,z)$ has an expansion into quasi-homogeneous terms:
\begin{gather}
  \label{eq:kernel}
  \begin{aligned}
    k(y,z) & = 
    -z \int_0^\infty s^{\ell-1} (s+1)^{d-\ell}
    \theta\bigl(y+s z\bigr)\,ds \\
    &=
    - \sum_{j=0}^{d-\ell}\tbinom{d-\ell}{j} \frac{z}{|z|^{d-j}}
     \int_0^\infty r^{d-j-1} \theta\bigl(y+r \frac{z}{|z|}\bigr)\,dr
\;.
  \end{aligned}
\end{gather}
The operator $\Poinc_{\ell}$ is therefore a weakly singular integral operator. In \cite[Section 3.3]{CMI08}, the following result is shown.

\begin{proposition}
\label{pro:pseudo}
For $1\le\ell\le d$, the operator $\Poinc_{\ell}$ is a pseudodifferential operator of
order $-1$ on $\R^d$.  It is well defined on $\FCinf[D]{\ell}$, it maps
$\FCinf[D]{\ell}$ to $\FCinf[D]{\ell-1}$ and $\FCinf[\overline{D}]{\ell}$ to
$\FCinf[\overline{D}]{\ell-1}$, and for any $s\in\R$ it has an
extension as a bounded operator
\[
  \Poinc_{\ell}\,:\: 
    \FHm[D]{\ell}{s} \to \FHm[D]{\ell-1}{s+1})\;.
\] 
Here, $\FHm[D]{\ell}{s}$ is the Sobolev space of $\ell$-forms on $D$ of order $s$.
\end{proposition}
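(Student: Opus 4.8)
The plan is to read off a symbol for $\Poinc_\ell$ from the kernel representation \eqref{eq:104}--\eqref{eq:kernel} and to verify directly that it is a classical (polyhomogeneous) symbol of order $-1$. Writing $z=y-x$, the identity \eqref{eq:104} exhibits $\Poinc_\ell$ as the integral operator $\bfu\mapsto\int_{\R^d}k(y,y-x)\contr\bfu(y)\,dy$; after choosing the Euclidean basis this is a \emph{system}, i.e. a matrix of scalar integral operators whose entries have Schwartz kernels given by the components of $k(y,y-x)$, so it suffices to analyse each scalar kernel while the contraction only supplies the constant combinatorial structure relating $\ell$- and $(\ell-1)$-form coefficients. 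First I would introduce the fibre Fourier transform $a(y,\xi)=\int_{\R^d}e^{-iz\cdot\xi}k(y,z)\,dz$ (taken componentwise, with $y$ a smooth parameter), so that, up to the sign convention in the phase, $\Poinc_\ell$ becomes the amplitude operator with amplitude $a(y,\xi)$. Since the amplitude depends on the output variable $y$ rather than on $x$, the standard amplitude-to-left-symbol reduction of pseudodifferential calculus then produces a genuine left symbol $\sigma(x,\xi)$ in the same class, which is all that is needed.

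The heart of the argument is the symbol analysis, and here the homogeneous splitting in \eqref{eq:kernel} does the work. Each summand $k_j(y,\cdot)$ is, for fixed $y$, positively homogeneous of degree $1-d+j$ in $z$ and smooth for $z\neq0$, with angular factor $g_j(y,z/|z|)=\int_0^\infty r^{d-j-1}\theta(y+r\,z/|z|)\,dr$ depending smoothly on $(y,z/|z|)$. The Fourier transform of a function homogeneous of degree $\beta$ on $\R^d$, smooth off the origin, is again homogeneous, of degree $-d-\beta$, and smooth off the origin with no logarithmic term, \emph{provided} $\beta$ is not an integer $\le-d$. In our situation $\beta_j=1-d+j$ ranges over $\{1-d,\dots,1-\ell\}$, so $\beta_j>-d$ for every $j$: the kernels $k_j$ are locally integrable, define tempered distributions without regularisation, and lie outside the exceptional set, while $-d-\beta_j=-1-j\le-1<0$ confirms that each transform has negative degree. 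Consequently each $a_j(y,\xi)$ is homogeneous of degree $-1-j$ in $\xi$, smooth for $\xi\neq0$ and smooth in the parameter $y$; the leading term $a_0$ is of Riesz-transform type, homogeneous of degree $-1$. Hence $a=\sum_{j=0}^{d-\ell}a_j$ obeys the symbol estimates $|\partial_\xi^\alpha\partial_y^\gamma a(y,\xi)|\le C_{\alpha\gamma}|\xi|^{-1-|\alpha|}$ for $|\xi|\ge1$ and admits the finite asymptotic expansion into homogeneous terms of degrees $-1,-2,\dots$, i.e. it is a classical symbol of order $-1$. The singularity of the homogeneous $a_j$ at $\xi=0$ is irrelevant: multiplying by a frequency cutoff vanishing near $0$ changes $\Poinc_\ell$ only by an operator whose amplitude has compact $\xi$-support, hence a smoothing operator, so $\Poinc_\ell$ is a classical pseudodifferential operator of order $-1$ up to a smoothing remainder.

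It remains to harvest the mapping properties. The two statements about smooth forms are cleanest to read off directly from the defining integrals \eqref{eq:Poincla}--\eqref{eq:Poincl} by differentiation under the integral sign: for $\bfu\in\FCinf[D]{\ell}$ the integrand is smooth in $x$ on $D$, giving $\Poinc_\ell\bfu\in\FCinf[D]{\ell-1}$, and since $\supp\theta\subset B\subset D$ stays away from $\partial D$ while star-shapedness keeps every segment $a+t(x-a)$ inside $\overline{D}$, the same differentiation is valid up to the boundary, yielding $\FCinf[\overline{D}]{\ell}\to\FCinf[\overline{D}]{\ell-1}$. For the Sobolev estimate I would use that $a(y,\xi)$ is, thanks to $\supp\theta\subset B$, \emph{compactly supported in $y$}; a pseudodifferential (amplitude) operator of order $-1$ with amplitude compactly supported in the spatial variable is bounded $\FHm[\R^d]{\ell}{s}\to\FHm[\R^d]{\ell-1}{s+1}$ for every $s\in\R$ by the Calder\'on--Vaillancourt theorem, and restriction to $D$ (together with the fact that \eqref{eq:104} localises the dependence of $\Poinc_\ell\bfu(x)$, $x\in D$, to the values of $\bfu$ on $D$) gives the asserted bound $\FHm[D]{\ell}{s}\to\FHm[D]{\ell-1}{s+1}$.

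The main obstacle, and the only genuinely delicate point, is the Fourier analysis of the homogeneous kernel pieces in the second paragraph: one must check carefully that the relevant homogeneity degrees avoid all exceptional integer values, so that the fibre Fourier transforms are honest homogeneous symbols with no logarithmic corrections, and that the smooth dependence on the parameter $y$ together with its compact support is preserved throughout the amplitude-to-symbol reduction. Once this bookkeeping is in place, the classical-symbol conclusion and the Sobolev and $C^\infty$ mapping properties follow from standard pseudodifferential calculus.
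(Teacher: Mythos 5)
Your route is the right one, and it is in fact the route the paper intends: the paper does not prove Proposition~\ref{pro:pseudo} itself, but derives the kernel representation \eqref{eq:104}--\eqref{eq:kernel} and then cites \cite[Section~3.3]{CMI08}, where the argument is precisely your second paragraph --- the fibre Fourier transform of the finitely many quasi-homogeneous kernel pieces, the observation that the degrees $1-d+j$ all exceed $-d$ and so avoid the exceptional integers, and the resulting classical expansion into terms homogeneous of degrees $-1,\dots,-1-(d-\ell)$. Your treatment of the two $C^\infty$ statements by differentiation under the integral sign, using star-shapedness to keep the segments inside $\overline D$, is also correct.

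There is, however, one step that is genuinely false as written: the claim that $\supp\theta\subset B$ makes the amplitude $a(y,\xi)$ \emph{compactly supported in $y$}. From \eqref{eq:kernel}, $k(y,z)\neq 0$ exactly when the ray $\{y+sz:\,s>0\}$ meets $B$; for \emph{every} $y\in\R^d$ there are directions $z$ with this property (aim at $B$), and for fixed $z\neq 0$ the support in $y$ is the unbounded set $B-\R^{+}z$. Worse, the angular factors $g_j(y,\omega)=\int_0^\infty r^{d-j-1}\theta(y+r\omega)\,dr$ grow like $|y|^{d-j-1}$ as $y\to\infty$ along rays pointing away from $B$ (take $y=b_0-R\omega$ with $b_0\in B$ and $R$ large), so your uniform symbol estimates $|\partial_\xi^\alpha\partial_y^\gamma a(y,\xi)|\le C_{\alpha\gamma}|\xi|^{-1-|\alpha|}$ hold only locally uniformly in $y$, not on all of $\R^d$, and the appeal to Calder\'on--Vaillancourt (which in any case concerns type $(0,0)$ symbols; for type $(1,0)$ the standard pseudodifferential mapping theorem is the relevant one) to get global $\FHm[\R^d]{\ell}{s}\to\FHm[\R^d]{\ell-1}{s+1}$ boundedness is unjustified. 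The repair uses the localization fact you mention only in passing: for $x\in D$, star-shapedness forces $k(y,y-x)$ to vanish unless $y$ lies on a segment joining $\supp\theta$ to $x$, hence unless $y\in D$. Therefore you may replace the kernel by $\chi(x)\,k(y,y-x)\,\chi(y)$, with $\chi\in C_0^\infty(\R^d)$ and $\chi\equiv 1$ on $\overline D$, without changing $\Poinc_{\ell}\bfu$ on $D$ for forms $\bfu$ on $D$; the cut-off amplitude \emph{is} compactly supported in both spatial variables and is a genuine uniform classical symbol of order $-1$, so the bound $\FHm[D]{\ell}{s}\to\FHm[D]{\ell-1}{s+1}$ follows by boundedly extending $\bfu$ from $D$ to $\R^d$, applying the global mapping theorem, and restricting to $D$. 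With this modification your proof is complete and coincides with the argument of \cite{CMI08} that the paper invokes.
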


\subsection{Lifting property}
\label{sec:lifting-Poinc}

The lifting property \eqref{eq:lift} is a consequence of the following identity,
which is a special case of ``Cartan's magic formula'' for Lie derivatives and for a
flow field generated by the dilations with center $a$.
\begin{multline}
\label{eq:Cartan}
 \frac{d}{dt}(t^\ell \bfu\bigl(a+t(x-a)\bigr) =\\
    \extd[\ell-1]\Bigl(t^{\ell-1}(x-a)\contr \bfu\bigl(a+t(x-a)\bigr)\Bigr) +
    t^{\ell}(x-a)\contr \extd[\ell]\bfu\bigl(a+t(x-a)\bigr)
\end{multline}
Here $\bfu$ is an $\ell$-form. The result is
\begin{equation}
\begin{aligned}
\label{eq:dR+Rd=1}
 \extd[\ell-1]\Poinc_\ell \bfu + \Poinc_{\ell+1} \extd[\ell]\bfu &= \bfu\; &&(1\le\ell\le d-1)\;;\\
 \Poinc_1 \extd[0]\bfu &= \bfu - \bigl(\theta,\bfu\bigr)_{0,D}\; &&(\ell=0)\;;\\
 \extd[d-1]\Poinc_d\bfu &=\bfu &&(\ell=d)\;.
\end{aligned}
\end{equation}
These relations are valid for all $\bfu\in C_0^{\infty}(\R^d,\Lambda^\ell)$ and by extension for all
$\bfu\in H^{s}(D,\Lambda^{\ell})$, $s\in\R$.

The perfect match of \eqref{eq:dR+Rd=1} with \eqref{eq:lift} from Assumption
\ref{ass:lift} suggests that the regularized Poincar\'e lifting $\Poinc_{\ell}$ provides
suitable local liftings as stipulated in Assumption \ref{ass:lift}. To this end,
we can choose as local spaces of ``more regular forms'' 
\begin{equation}
  \label{eq:XS}
  \begin{gathered}
    \HX[K]{\ell} := \FHd[K]{\ell}\cap \FHm[K]{\ell}{r}\;,\\
    \HS[K]{\ell-1} := \FHdr[K]{\ell-1}{r}
    \quad\mbox{and}\quad
    \HS[K]{\ell} := \FHdr[K]{\ell}{r}  \;,
  \end{gathered}
\end{equation}
for some $0<r\leq 1$, where we denote by $\FHdr[K]{k}{r}$ the space
\[
   \FHdr[K]{k}{r} 
   := \{\bfv\in \FHm[K]{k}{r}: \extd[k]\bfv\in\FHm[K]{k+1}{r}\}\;.
\]
All these spaces are equipped with the natural Hilbert space norms. Also keep in mind
that the global spaces $\HX{\ell}$, $\HS{\ell-1}$ and $\HS{\ell}$ are determined by
their local definition on the mesh cells $K$, \textit{cf.} \eqref{eq:XK} and \eqref{eq:SK}.
For the particular choice \eqref{eq:XS} an assumption of
Corollary~\ref{cor:approx-evp} can be verified.

\begin{lemma}
  \label{lem:dense}
  For $\HX{\ell}$ arising from \eqref{eq:XS} the space $\HX{\ell}\cap \zbkFHd{\ell}$
  is dense in $\zbkFHd{\ell}$.
\end{lemma}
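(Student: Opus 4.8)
The plan is to reduce the statement to an $\FLtwo{\ell}$-density assertion, split an arbitrary closed form into an exact part and a cohomology (harmonic) part by a Hodge decomposition, and then smooth only the exact part while leaving the finite-dimensional harmonic part untouched. First observe that on $\zbkFHd{\ell}$ the graph norm $\NFHd{\ell}{\cdot}$ coincides with $\NFLtwo{\ell}{\cdot}$ (since $\extd[\ell]$ vanishes there), so it suffices to approximate in the $\FLtwo{\ell}$-norm. I would then invoke the $\FLtwo{\ell}$-orthogonal Hodge--Morrey--Friedrichs decomposition of the closed forms,
\[
   \zbkFHd{\ell} = \extd[\ell-1]\zbFHd{\ell-1}\ \oplus\ \big(\zbkFHd{\ell}\cap\zbFXd{\ell}\big)\,,
\]
in which the space of exact forms is $\FLtwo{\ell}$-closed and the second summand is the finite-dimensional space of harmonic Dirichlet fields. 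Writing $\bfz\in\zbkFHd{\ell}$ as $\bfz=\extd[\ell-1]\phibf+\bfh$ with $\phibf\in\zbFHd{\ell-1}$ and $\bfh\in\zbkFHd{\ell}\cap\zbFXd{\ell}$, the two summands are treated separately.

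For the harmonic part, the claim is that $\bfh$ already lies in $\HX{\ell}\cap\zbkFHd{\ell}$, so no approximation is needed. Indeed $\extd[\ell]\bfh=0$, and $\bfh\in\zbFXd{\ell}$; the local spaces \eqref{eq:XS} are designed exactly so as to encode the piecewise $\FHm[K]{\ell}{r}$-regularity enjoyed by forms in $\zbFXd{\ell}$, whence $\bfh\in\HX{\ell}$. This membership is nothing but the harmonic-field instance of Assumption~\ref{ass:reg} for the concrete choice \eqref{eq:XS}.

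For the exact part I would smooth the potential. By the definition \eqref{eq:2.3} of $\zbFHd{\ell-1}$ as the $\FHd{\ell-1}$-closure of $\FCcinf{\ell-1}$, pick $\phibf_n\in\FCcinf{\ell-1}$ with $\phibf_n\to\phibf$ in $\FHd{\ell-1}$ and set $\bfz_n:=\extd[\ell-1]\phibf_n+\bfh$. Each $\extd[\ell-1]\phibf_n$ is a compactly supported smooth $\ell$-form, hence closed with vanishing trace, so $\bfz_n\in\zbkFHd{\ell}$; moreover $(\extd[\ell-1]\phibf_n)\on{K}$ is smooth and therefore lies in $\FHm[K]{\ell}{r}\cap\FHd[K]{\ell}=\HX[K]{\ell}$ for every cell $K$, giving $\bfz_n\in\HX{\ell}$. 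Finally $\phibf_n\to\phibf$ in $\FHd{\ell-1}$ forces $\extd[\ell-1]\phibf_n\to\extd[\ell-1]\phibf$ in $\FLtwo{\ell}$, so $\bfz_n\to\bfz$ in $\FLtwo{\ell}$, which is the desired approximation by elements of $\HX{\ell}\cap\zbkFHd{\ell}$.

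The step I expect to be the real obstacle is the regularity claim $\zbkFHd{\ell}\cap\zbFXd{\ell}\subset\HX{\ell}$, i.e. that the harmonic Dirichlet fields are piecewise $\FHm[K]{\ell}{r}$: this is external elliptic regularity (interior smoothness of the Hodge--Laplacian together with boundary-regularity estimates on the Lipschitz polyhedron), and it is precisely here that one exploits the freedom to take the exponent $r\in(0,1]$ in \eqref{eq:XS} small enough to accommodate corner and edge singularities. The only other point requiring justification, the $\FLtwo{\ell}$-closedness of $\extd[\ell-1]\zbFHd{\ell-1}$ that guarantees the genuine existence of the potential $\phibf$, is routine and follows from the Poincar\'e--Friedrichs inequality underwritten by the compact embedding of Theorem~\ref{thm:compemb}. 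Everything else is soft.
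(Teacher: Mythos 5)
Your treatment of the exact part is sound: the graph-norm density of compactly supported smooth forms is exactly the definition \eqref{eq:2.3}, and the closed-range/Poincar\'e argument you sketch for the existence of the potential does follow from Theorem~\ref{thm:compemb}. The gap is in the harmonic part, and it is precisely the step you flag as ``the real obstacle''. The membership $\zbkFHd{\ell}\cap\zbFXd{\ell}\subset\HX{\ell}$ is \emph{not} ``the harmonic-field instance of Assumption~\ref{ass:reg} for the concrete choice \eqref{eq:XS}'': Assumption~\ref{ass:reg} is the one assumption that the choice \eqref{eq:XS} does \emph{not} deliver by itself. The paper states explicitly that the exponent $r$ in \eqref{eq:XS} is \emph{determined by} Assumption~\ref{ass:reg}, and Lemma~\ref{lem:assass} pointedly lists only Assumptions~\ref{ass:compemb}, \ref{ass:Smax} and \ref{ass:lift} as consequences of \eqref{eq:XS}; the regularity of $\zbFXd{\ell}$ is verified only in Section~\ref{sec:discr-diff-forms}, via \eqref{eq:regYd2l1} and \eqref{eq:regXmw}, only for $d=2,3$, and only for $r\le\delta+\tfrac12$. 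Moreover, your strategy of leaving $\bfh$ untouched genuinely fails in general: on a nonconvex polygon a harmonic Dirichlet field behaves like $\rho^{\pi/\omega-1}$ near a reentrant corner of opening $\omega>\pi$ ($\rho$ the distance to the corner), so it lies in $H^{s}$ only for $s<\pi/\omega<1$; for $r$ close to $1$ it is therefore not piecewise in $\FHm[K]{\ell}{r}$, i.e.\ $\bfh\notin\HX{\ell}$ --- yet Lemma~\ref{lem:dense} is asserted (and is true) for \emph{every} $r\in(0,1]$ and every $d$. So in that regime the harmonic part must itself be approximated, which your proof does not do; and for $d>3$ the regularity input you would need is, as the paper remarks elsewhere, not even available in the literature.

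The paper avoids all of this by replacing your $\FLtwo{\ell}$-orthogonal Hodge decomposition with the direct (non-orthogonal) decomposition of \cite[Thm.~4.9(c)]{CMI08}, namely $\zbkFHd{\ell}=\extd[\ell-1]\zbFHm{\ell-1}{1}\oplus\mathcal{C}_{\ell}$, in which the cohomology representatives $\mathcal{C}_{\ell}$ are by construction smooth forms compactly supported in $\overline\Omega$. Their membership in $\HX{\ell}$ is then trivial for every $r$ and every $d$, so no elliptic regularity enters anywhere; the exact part is smoothed through its $H^1$ potential just as you smooth yours through the graph-norm closure. If you want to rescue your argument, the fix is exactly this: choose cohomology representatives that are smooth rather than harmonic.
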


\begin{proof}
  By \cite[Thm.~4.9(c)]{CMI08} we have a direct decomposition
\begin{equation}
\label{eq:dec1}
    \zbkFHd{\ell} = \extd[\ell-1]\zbFHm{\ell-1}{1} \,\oplus\, \mathcal{C}_{\ell}\;,\quad
    \mathcal{C}_{\ell} \subset C^{\infty}_{\overline\Omega}(\R^n,\Lambda^\ell)\;,
\end{equation}
  where $C^{\infty}_{\overline\Omega}(\R^d,\Lambda^\ell)$ is the space of compactly
  supported, smooth $\ell$-forms on $\R^d$ with support contained in
  $\overline\Omega$ or, equivalently, the space of all smooth $\ell$-forms on
  $\overline\Omega$ that vanish on $\partial\Omega$ together with all their
  derivatives.  Since $C^{\infty}_{\overline\Omega}(\R^d,\Lambda^{\ell-1})$ is dense in 
  $\zbFHm{\ell-1}{1}$, we deduce: 
\[ C^{\infty}_{\overline\Omega}(\R^d,\Lambda^\ell)\cap
  \extd[\ell-1]\zbFHm{\ell-1}{1} \quad \mbox{is dense in}\quad
  \extd[\ell-1]\zbFHm{\ell-1}{1}
\]
 As every
  $\bfu\in C^{\infty}_{\overline\Omega}(\R^d,\Lambda^\ell)$ belongs to $\HX{\ell}$, the
  assertion follows.
\end{proof}

We point out that the choice of $r$ in \eqref{eq:XS} is determined by Assumption
\ref{ass:reg}. Also note that whenever we opt for \eqref{eq:XS}, Rellich's theorem
ensures Assumption \ref{ass:compemb}, because the mesh is kept fixed.

The construction of $\Poinc_{\ell}$ entails a constraint on the cell shapes. This is
satisfied for standard finite element meshes, where the cells usually are convex
polyhedra.
\medskip

\ass{ass:K}{Every cell $K\in\mesh$ is a star-shaped polyhedron.}

\begin{lemma}
\label{lem:assass}
  Assumption \ref{ass:K}, the choice \eqref{eq:XS} for spaces $\HX[K]{\ell}$ and $\HS[K]{\ell-1}$ imply
  Assumptions \ref{ass:compemb}, \ref{ass:Smax} and \ref{ass:lift}.
\end{lemma}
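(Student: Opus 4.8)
The plan is to verify the three assumptions one at a time, using throughout that, by Assumption~\ref{ass:K}, each cell $K$ is star-shaped, so the regularized Poincaré operators $\Poinc_{k,K}$ of Section~\ref{sec:smooth-poic-lift} (the operators \eqref{eq:Poincl} built with $D=K$) are available together with their mapping properties from Proposition~\ref{pro:pseudo} and the homotopy identities \eqref{eq:dR+Rd=1}. Assumption~\ref{ass:compemb} is immediate: by \eqref{eq:XS} every $\bfu\in\HX{\ell}$ satisfies $\bfu\on{K}\in\FHm[K]{\ell}{r}$ with $r>0$, and the local norm \eqref{eq:NHXloc} bounds each $\NFHm[K]{\ell}{r}{\bfu\on{K}}$. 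Since $\mesh$ is finite and fixed, Rellich's theorem on each $K$ followed by a finite diagonal extraction turns any $\HX{\ell}$-bounded sequence into one converging in $\FLtwo{\ell}$, so $\HX{\ell}\contemb\FLtwo{\ell}$ is compact.

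For Assumption~\ref{ass:lift} I would set $(\Poinc_{\ell,K},\Poinc_{\ell+1,K})$ to be the regularized Poincaré operators on $K$; then \eqref{eq:lift} is exactly \eqref{eq:dR+Rd=1}, which holds on $H^{r}(K,\Lambda^{\ell})\supseteq\HX[K]{\ell}$. The two continuity statements then reduce to Proposition~\ref{pro:pseudo}. For $\Poinc_{\ell+1,K}\colon\FLtwo[K]{\ell+1}\to\HX[K]{\ell}$, the $\FHm[K]{\ell}{r}$-part is the case $s=0$ of Proposition~\ref{pro:pseudo} combined with $r\le1$; membership in $\FHd[K]{\ell}$ follows by writing, via \eqref{eq:dR+Rd=1} one degree up, $\extd[\ell]\Poinc_{\ell+1,K}\bfw=\bfw-\Poinc_{\ell+2,K}\extd[\ell+1]\bfw$ (and $\extd[\ell]\Poinc_{\ell+1,K}\bfw=\bfw$ in the boundary case $\ell+1=d$), noting that $\extd[\ell+1]$ sends $\FLtwo[K]{\ell+1}$ into $H^{-1}$ and $\Poinc_{\ell+2,K}$ sends $H^{-1}$ back into $\FLtwo[K]{\ell+1}$. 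For $\Poinc_{\ell,K}\colon\HX[K]{\ell}\to\HS[K]{\ell-1}=\FHdr[K]{\ell-1}{r}$, Proposition~\ref{pro:pseudo} gives $\Poinc_{\ell,K}\bfu\in\FHm[K]{\ell-1}{r+1}\subset\FHm[K]{\ell-1}{r}$, and \eqref{eq:dR+Rd=1} in the form $\extd[\ell-1]\Poinc_{\ell,K}\bfu=\bfu-\Poinc_{\ell+1,K}\extd[\ell]\bfu$ shows $\extd[\ell-1]\Poinc_{\ell,K}\bfu\in\FHm[K]{\ell}{r}$, because $\bfu\in\FHm[K]{\ell}{r}$ and, as $\extd[\ell]\bfu\in\FLtwo[K]{\ell+1}$, the second term lies in $\FHm[K]{\ell}{1}\subset\FHm[K]{\ell}{r}$.

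Assumption~\ref{ass:Smax} has an easy half and a hard half. The inclusion $\extd[\ell-1]\HS{\ell-1}\subseteq\HX{\ell}$ is local: for $\phibf\in\HS{\ell-1}\subseteq\zbFHd{\ell-1}$ the form $\extd[\ell-1]\phibf$ is closed and, since the trace commutes with $\extd[\ell-1]$, has vanishing trace, hence lies in $\zbFHd{\ell}$; cellwise $\extd[\ell-1]\phibf\on{K}\in\FHm[K]{\ell}{r}$ by definition of $\HS[K]{\ell-1}$, so $\extd[\ell-1]\phibf\on{K}\in\FHd[K]{\ell}\cap\FHm[K]{\ell}{r}=\HX[K]{\ell}$ and continuity is read off \eqref{eq:NHXloc}. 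The maximality $\extd[\ell-1]\HS{\ell-1}=\extd[\ell-1]\zbFHd{\ell-1}\cap\HX{\ell}$ is the substantive statement, with ``$\subseteq$'' trivial from the easy half, and ``$\supseteq$'' the one I expect to be the main obstacle: it is global, and the cellwise liftings $\Poinc_{\ell,K}$ produce regular local primitives that need not patch into a conforming global potential.

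To overcome this I would invoke the Costabel--McIntosh decomposition \cite[Thm.~4.9(c)]{CMI08}, recorded in \eqref{eq:dec1} as $\zbkFHd{\ell}=\extd[\ell-1]\zbFHm{\ell-1}{1}\oplus\mathcal{C}_{\ell}$. Given $\bfw\in\extd[\ell-1]\zbFHd{\ell-1}\cap\HX{\ell}$, which belongs to $\zbkFHd{\ell}$, decompose $\bfw=\extd[\ell-1]\etabf+\bfc$ with $\etabf\in\zbFHm{\ell-1}{1}$ and $\bfc\in\mathcal{C}_{\ell}$; since both $\bfw$ and $\extd[\ell-1]\etabf$ are exact, $\bfc$ is an exact element of the complement $\mathcal{C}_{\ell}$ and hence vanishes, i.e. every $L^2$-exact form already admits an $H^1$ primitive, $\extd[\ell-1]\zbFHd{\ell-1}=\extd[\ell-1]\zbFHm{\ell-1}{1}$. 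Then $\bfw=\extd[\ell-1]\etabf$ with $\etabf\in\zbFHm{\ell-1}{1}=\ocH^{1}(\Omega,\Lambda^{\ell-1})$, so $\etabf$ is cellwise $H^{1}\subset\FHm[K]{\ell-1}{r}$ while $\extd[\ell-1]\etabf=\bfw$ is cellwise $\FHm[K]{\ell}{r}$; thus $\etabf\in\HS{\ell-1}$ is the sought regular potential. The one point deserving care is the assertion that an exact form in $\mathcal{C}_{\ell}$ is zero, which relies on $\mathcal{C}_{\ell}$ being a complement of the exact forms (a space of cohomology representatives) in \cite{CMI08}; I would make this explicit rather than leave it implicit.
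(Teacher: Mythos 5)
Your treatment of Assumptions \ref{ass:compemb} and \ref{ass:lift} is correct, and it matches what the paper intends: the paper in fact disposes of these two (and of the easy inclusion in Assumption \ref{ass:Smax}) in the text surrounding the lemma --- Rellich plus the fixed finite mesh for \ref{ass:compemb}, Proposition~\ref{pro:pseudo} and the homotopy identities \eqref{eq:dR+Rd=1} for \ref{ass:lift} --- so that its written proof consists \emph{only} of the maximality relation. For that maximality relation your strategy agrees with the paper's up to the choice of citation: both arguments reduce everything to the single statement that a form $\bfu=\extd[\ell-1]\phibf\in\FLtwo{\ell}$ with $\phibf\in\zbFHd{\ell-1}$ already admits a potential $\psibf\in\zbFHm{\ell-1}{1}$, after which the cellwise conclusion $\psibf\on{K}\in\FHdr[K]{\ell-1}{r}$ follows exactly as you wrote. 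The paper obtains this statement in one line by citing \cite[Cor.~4.7]{CMI08}, which \emph{is} this potential-regularity result; you instead try to extract it from the decomposition \eqref{eq:dec1}, i.e.\ \cite[Thm.~4.9(c)]{CMI08}, and this is why you are left with the claim you flag at the end.

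That flagged claim is a genuine gap if one uses only \eqref{eq:dec1} as stated: the decomposition exhibits $\mathcal{C}_\ell$ as a complement of $\extd[\ell-1]\zbFHm{\ell-1}{1}$ in $\zbkFHd{\ell}$, and this by itself does not exclude a nonzero element of $\mathcal{C}_\ell$ of the form $\extd[\ell-1]\phibf$ with $\phibf$ merely in $\zbFHd{\ell-1}$. Being a complement of the $H^1$-exact forms is strictly weaker information than being a complement of \emph{all} exact forms; the latter is logically equivalent to the identity $\extd[\ell-1]\zbFHd{\ell-1}=\extd[\ell-1]\zbFHm{\ell-1}{1}$ that you are trying to prove, so invoking it as a property of $\mathcal{C}_\ell$ without further justification is circular. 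The gap can be closed without falling back on \cite[Cor.~4.7]{CMI08}, but it takes two additional ingredients: (i) the sum \eqref{eq:dec1} is a \emph{topological} direct sum (as it is in \cite{CMI08}), so that $\extd[\ell-1]\zbFHm{\ell-1}{1}$, being the kernel of the continuous projection onto the finite-dimensional space $\mathcal{C}_\ell$, is closed in $\FLtwo{\ell}$; (ii) by \eqref{eq:2.3} the space $\FCcinf{\ell-1}$ is dense in $\zbFHd{\ell-1}$ for the graph norm, so $\extd[\ell-1]\zbFHd{\ell-1}$ is contained in the $\FLtwo{\ell}$-closure of $\extd[\ell-1]\FCcinf{\ell-1}\subset\extd[\ell-1]\zbFHm{\ell-1}{1}$, which by (i) equals $\extd[\ell-1]\zbFHm{\ell-1}{1}$ itself. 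With (i) and (ii) the exact part of $\mathcal{C}_\ell$ vanishes and your argument goes through; alternatively, citing \cite[Cor.~4.7]{CMI08} directly, as the paper does, makes the whole detour through \eqref{eq:dec1} unnecessary.
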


\begin{proof}
The only fact remaining to be proved is the maximality relation in Assumption \ref{ass:Smax}
\[
   \extd[\ell-1] \HS{\ell-1} = \extd[\ell-1] \zbFHd{\ell-1} \cap \HX{\ell}.
\]
The inclusion $\subset$ holds by definition. Let us prove the converse inclusion.\\
Let $\bfu\in\extd[\ell-1] \zbFHd{\ell-1} \cap \HX{\ell}$. Thus $\bfu=\extd[\ell-1]\phibf$ with $\phibf\in\zbFHd{\ell-1}$. Since $\bfu\in\FLtwo{\ell}$, using \cite[Cor.\ 4.7]{CMI08} we obtain that there exists $\psibf\in\zbFHm{\ell-1}{1}$ such that $\bfu=\extd[\ell-1]\psibf$. In particular, $\psibf\on{K}$ belongs to $\FHm[K]{\ell-1}{r}$ for all $K$ and, since $\bfu\on{K}$ belongs to $\FHm[K]{\ell }{r}$, we finally find that $\psibf\on{K}\in\FHdr[K]{\ell-1}{r}$.
\end{proof}

\subsection{Preservation of polynomial forms}
\label{sec:polynomials-and-Poinc}

Fundamental in finite element methods is the notion of polynomial differential forms.
For an ordered $\ell$-tuple $I=(i_{1},\ldots,i_{\ell})$,
$i_{1}<i_{2}<\ldots<i_{\ell}$, $\{i_{1},\ldots,i_{\ell}\}\subset\{1,\ldots,d\}$, let
\begin{gather*}
  \mathsf{d} x_{I} := \mathsf{d}x_{i_{1}} \wedge \cdots \wedge\mathsf{d}x_{i_{\ell}} \;,
\end{gather*}
where $\mathsf{d}x_{j}$, $j=1,\ldots,d$, are the co-ordinate 1-forms in Euclidean
space $\bbR^{d}$. The space $\FPol{p}{\ell}$ of polynomial $\ell$-forms on $\bbR^{d}$
is defined as
\begin{gather*}
  \FPol{p}{\ell} := \Big\{\bfu = \sum\nolimits_{I}u_{I}\,\mathsf{d} x_{I}:\, 
  u_{I}\in\Pol_{p}(\bbR^{d})\Big\}\;,
\end{gather*}
where $\sum_{I}$ indicates summation over all ordered $\ell$-tuples, and
$\Pol_{p}(\bbR^{d})$ is the space of $d$-variate polynomials of total degree $\leq p$. 
We remark that for $d\in \{2,3\}$ polynomial forms possess polynomial vector
proxies. 

From the definition \eqref{eq:Poincla} it is clear that the Poincar\'e operator
$\Poinc_{\ell,a}$ maps differential forms with polynomial coefficients to
differential forms with polynomial coefficients. The same holds for the regularized
Poincar\'e operator $\Poinc_{\ell}$ by \eqref{eq:Poincl}. If we want $\Poinc_{\ell}$
to map a space $P(\Lambda^\ell)$ of differential forms of order $\ell$ (e.g., with
polynomial coefficients) into a space $P(\Lambda^{\ell-1})$ of differential forms of
order $\ell-1$, it is sufficient to require the following two properties, see
\cite[Proposition 4.2]{CMI08}.

\begin{proposition}
\label{pro:poly-Poinc}
Assume that $P(\Lambda^\ell)$ and $P(\Lambda^{\ell-1})$ are finite-dimensional spaces of differential forms satisfying \\
\iti1 The space $P(\Lambda^{\ell})$ is invariant with respect to dilations and
translations, that is
$$
   \text{For any }t\in\R, a\in\R^n:
   \text{ if }\bfu\in P(\Lambda^{\ell}), \text{ then }
   \bigl(x\mapsto \bfu(tx+a)\bigr) \in P(\Lambda^{\ell}) \;.
$$ 
\iti2
The interior product
 $x\contr:\bfu\mapsto x\contr \bfu$ maps 
 $P(\Lambda^{\ell})$ to $P(\Lambda^{\ell-1})$.\\
Then $\Poinc_{\ell}$ maps  $P(\Lambda^\ell)$ into $P(\Lambda^{\ell-1})$. 
\end{proposition}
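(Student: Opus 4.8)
The plan is to peel the definition \eqref{eq:Poincla}--\eqref{eq:Poincl} apart operation by operation and to check that each step keeps us inside the prescribed finite-dimensional spaces. Fix $\bfu\in P(\Lambda^{\ell})$. The first observation is that for every fixed scalar $t$ and point $a$ the form $x\mapsto\bfu\bigl(a+t(x-a)\bigr)=\bfu\bigl(tx+(1-t)a\bigr)$ is obtained from $\bfu$ by a dilation followed by a translation, so hypothesis (i) places it again in $P(\Lambda^{\ell})$. Since $P(\Lambda^{\ell})$ is finite-dimensional, hence a closed subspace, the inner integral $\int_0^1 t^{\ell-1}\bfu\bigl(a+t(x-a)\bigr)\,dt$ --- a limit of Riemann sums, each a linear combination of elements of $P(\Lambda^{\ell})$ --- lies in $P(\Lambda^{\ell})$ as well; concretely, expanding in a fixed basis of $P(\Lambda^{\ell})$ turns the integral into scalar integrals of the basis coefficients.

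The cleanest way to organise the contraction and the base point is through the translation covariance $\Poinc_{\ell,a}=\tau_a\circ\Poinc_{\ell,0}\circ\tau_{-a}$, where $\tau_a\bfv(x):=\bfv(x-a)$. First I would treat the centred operator $\Poinc_{\ell,0}$: for $\bfu\in P(\Lambda^{\ell})$ the field $\int_0^1 t^{\ell-1}\bfu(tx)\,dt$ lies in $P(\Lambda^{\ell})$ by the previous step (case $a=0$), and contracting with the radial field $x\,\contr$ is exactly the operation covered by hypothesis (ii), so $\Poinc_{\ell,0}\bfu\in P(\Lambda^{\ell-1})$ using only (i) and (ii). For a general base point $a\in B$, invariance of $P(\Lambda^{\ell})$ under translations gives $\tau_{-a}\bfu\in P(\Lambda^{\ell})$, whence $\Poinc_{\ell,0}\tau_{-a}\bfu\in P(\Lambda^{\ell-1})$.

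Finally I would reassemble the regularized operator \eqref{eq:Poincl} as $\Poinc_{\ell}\bfu(x)=\int_B\theta(a)\,\tau_a\bigl(\Poinc_{\ell,0}\tau_{-a}\bfu\bigr)(x)\,da$ and integrate the $P(\Lambda^{\ell-1})$-valued integrand over the base point $a$, once more using closedness of the finite-dimensional target. The one genuinely delicate point --- the place I expect to be the main obstacle --- is precisely this last integration over $a$: the factor $\tau_a$ displaces the centred lifting by $a$, so in order to keep $\tau_a\bigl(\Poinc_{\ell,0}\tau_{-a}\bfu\bigr)$ inside $P(\Lambda^{\ell-1})$ one needs $P(\Lambda^{\ell-1})$ to be stable under translations too. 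This is automatic in the intended application, where $P(\Lambda^{\ell-1})$ is a space of polynomial forms of bounded total degree and hence dilation/translation invariant: expanding $\bfv(x-a)$ in powers of $x$ produces a polynomial in $x$ of the same degree whose coefficients are polynomials in $a$, and integrating these scalar coefficients against $\theta(a)$ over $B$ returns an element of $P(\Lambda^{\ell-1})$. Apart from this point, no contraction with a constant vector ever needs to be controlled, which is exactly why the covariance reformulation is preferable to splitting $(x-a)\contr=x\contr-a\contr$ directly, where the stray term $a\contr$ (interior product with a constant vector) is not covered by hypothesis (ii).
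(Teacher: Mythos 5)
Your proof is correct in substance and follows what is essentially the only natural route; note that the paper itself states Proposition~\ref{pro:poly-Poinc} without proof, deferring to \cite[Proposition~4.2]{CMI08}, and the argument there rests on the same ingredients you use: invariance under the affine substitutions $x\mapsto tx+a$ from \emph{(i)}, closedness of a finite-dimensional space under the parameter integrals (expand in a fixed basis and integrate the scalar coefficients), the contraction hypothesis \emph{(ii)}, and the translation covariance $\Poinc_{\ell,a}=\tau_{a}\circ\Poinc_{\ell,0}\circ\tau_{-a}$, $\tau_{a}\bfv(x):=\bfv(x-a)$, which reduces everything to the centred operator.

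The point you single out as the ``main obstacle'' deserves to be stated more strongly: it is not a delicate step of your proof, it is a hypothesis missing from the proposition as printed, and your instinct to add it is exactly right. With \emph{(i)} and \emph{(ii)} alone the conclusion is false. Take any $d\geq1$, $\ell=1$, $P(\Lambda^{1})=\{c\,\mathsf{d}x_{1}:c\in\R\}$ (dilation and translation invariant, since the coefficients are constant) and $P(\Lambda^{0})=\operatorname{span}\{x_{1}\}$, which satisfies \emph{(ii)} because $x\contr(c\,\mathsf{d}x_{1})=c\,x_{1}$. Then
\[
  \Poinc_{1}(c\,\mathsf{d}x_{1})(x)
  =\int_{B}\theta(a)\,c\,(x_{1}-a_{1})\,da
  =c\,\Bigl(x_{1}-\int_{B}a_{1}\theta(a)\,da\Bigr),
\]
which leaves $\operatorname{span}\{x_{1}\}$ whenever $\theta$ is chosen with $\int_{B}a_{1}\theta(a)\,da\neq0$ (e.g.\ $\theta\geq0$ and $B$ a ball not meeting the hyperplane $\{x_{1}=0\}$). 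So translation stability of the \emph{target} $P(\Lambda^{\ell-1})$ --- equivalently, stability of the target under contraction with constant vectors --- must indeed be assumed, exactly as your proof requires. This repair is harmless for the paper: in the only place the proposition is used, Assumption~\ref{ass:poly-Poinc} invokes it for the pair consisting of the source $\extd[\ell]\Wpl[K]{\ell}{p}$ and the target $\Wpl[K]{\ell}{p}$, and part (i) of that assumption makes the target dilation/translation invariant (the source inherits invariance as the image under $\extd[\ell]$ of an invariant space), so Assumption~\ref{ass:Wlift} and everything downstream stands. With the extra invariance hypothesis made explicit, your argument is complete and correct.
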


For the compatibility Assumption~\ref{ass:Wlift} to hold, it is therefore sufficient
to make the following assumption about the local polynomial space $\Wpl[K]{\ell}{p}$.
\medskip

\ass{ass:poly-Poinc}{
 \mbox{}
  \begin{enumerate}
    \renewcommand{\labelenumi}{(\roman{enumi})}
  \item 
    The space $\Wpl[K]{\ell}{p}$ is invariant with respect to dilations and
    translations.
  \item
    The differential operator
    $x\contr\extd[\ell]:\bfu\mapsto x\contr \extd[\ell]\bfu$ maps 
    $\Wpl[K]{\ell}{p}$ into $\Wpl[K]{\ell}{p}$.  
\end{enumerate}}

\medskip 
To summarize:
\begin{center}
  Assumptions \ref{ass:K}, \ref{ass:poly-Poinc}, and \eqref{eq:XS}\quad
  $\Longrightarrow$\quad
  Assumptions  \ref{ass:compemb}, \ref{ass:Smax}, \ref{ass:lift}, and \ref{ass:Wlift}.
\end{center}

\section{Discrete differential forms}
\label{sec:discr-diff-forms-2}

Now we introduce concrete spaces of discrete differential forms. We merely summarize
the constructions that have emerged from research in differential geometry (the
``Whitney-forms'' introduced in \cite{WIT57}) and finite element theory
(``Raviart-Thomas elements'' of \cite{RAT77} and ``N\'ed\'elec finite elements'' of
\cite{NED80,NED86}). These schemes were later combined into the concept of discrete
differential forms \cite{BOS88a,HIP96b}. Surveys and many more details can be found
in \cite{HIP02,AFW06,AFW09,BOS05c}.

\subsection{Simplicial meshes}
\label{sec:simplicial-meshes}

Let $\mesh$ be a conforming simplicial finite element mesh covering the Lipschitz polyhedron
$\Omega\subset\bbR^{d}$. As elaborated in \cite[Sect.~3 \& 4]{AFW06} for
$p\in\mathbb{N}$ the following choices
\begin{align}
  \label{eq:ned1}
  \Wpl[K]{\ell}{p} & := \FPol{p-1}{\ell}\on{K} +  x \contr
  \FPol{p-1}{\ell+1}\on{K}\\[-2.0ex]
  \intertext{and} 
  \label{eq:ned2}
  \Wpl[K]{\ell}{p} & := \FPol{p}{\ell}\on{K}
\end{align}
of local spaces, through \eqref{eq:Wlploc}, gives rise to meaningful global finite
elment spaces $\zbWpl{p}{\ell}$ of discrete differential forms.

By construction both Assumption \ref{ass:K} and Assumption \ref{ass:poly-Poinc} are
satisfied for these spaces. The asymptotic density property also holds.

\begin{lemma}
  \label{lem:Vpdense}
  The spaces $\zbWpl{p}{\ell}$ of discrete differential forms built from
  \eqref{eq:ned1} or \eqref{eq:ned2} meet the requirement \eqref{eq:CAS}.
\end{lemma}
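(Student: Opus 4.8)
The plan is to reduce \eqref{eq:CAS} to a $p$-approximation statement for \emph{smooth} forms and then to exploit the commuting-diagram structure of the families \eqref{eq:ned1} and \eqref{eq:ned2}. By the very definition \eqref{eq:2.3}, $\zbFHd{\ell}$ is the closure of $\FCcinf{\ell}$ in the graph norm $\NFHd{\ell}{\cdot}$. Hence, given $\bfv\in\zbFHd{\ell}$ and $\eta>0$, I would first pick $\bfw\in\FCcinf{\ell}$ with $\NFHd{\ell}{\bfv-\bfw}<\eta/2$; by the triangle inequality it then suffices to approximate the smooth, compactly supported form $\bfw$ to within $\eta/2$ in the graph norm by elements of $\zbWpl[]{\ell}{p}$ for $p$ large.

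For the two classical families \eqref{eq:ned1}, \eqref{eq:ned2} the local spaces form exact discrete de Rham complexes on the fixed simplicial mesh $\mesh$ (cf.\ the exact sequence recorded after Assumption~\ref{ass:cdp}), and they carry canonical commuting interpolation operators $\Pi^{k}_{p}$. These are defined on smooth forms, map into the $\FHd{k}$-conforming finite element spaces, send forms with vanishing trace on $\partial\Dom$ into the constrained spaces $\zbWpl[]{k}{p}$, and obey $\extd[k]\Pi^{k}_{p}=\Pi^{k+1}_{p}\extd[k]$. Since $\bfw$ has compact support, $\Pi^{\ell}_{p}\bfw\in\zbWpl[]{\ell}{p}$, and the commuting relation gives
\[
  \extd[\ell]\bigl(\bfw-\Pi^{\ell}_{p}\bfw\bigr)=\bigl(\Op{Id}-\Pi^{\ell+1}_{p}\bigr)\extd[\ell]\bfw\,.
\]
Consequently the graph-norm error splits as
\[
  \NFHd{\ell}{\bfw-\Pi^{\ell}_{p}\bfw}^{2}=\NFLtwo{\ell}{\bigl(\Op{Id}-\Pi^{\ell}_{p}\bigr)\bfw}^{2}+\NFLtwo{\ell+1}{\bigl(\Op{Id}-\Pi^{\ell+1}_{p}\bigr)\extd[\ell]\bfw}^{2}\,.
\]

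Both $\bfw$ and $\extd[\ell]\bfw$ are smooth. I would therefore conclude by invoking the $p$-version approximation property $\NFLtwo{k}{(\Op{Id}-\Pi^{k}_{p})\bfg}\to 0$ as $p\to\infty$, valid for smooth $\bfg$; applied with $\bfg=\bfw$ (for $k=\ell$) and $\bfg=\extd[\ell]\bfw$ (for $k=\ell+1$) it drives the right-hand side above to $0$, whence $\NFHd{\ell}{\bfw-\Pi^{\ell}_{p}\bfw}<\eta/2$ for $p$ large and \eqref{eq:CAS} follows.

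The crux, and the only nontrivial point, is this last $p$-convergence for smooth forms. On the fixed mesh it is a pure high-degree (spectral) approximation statement, but it must be obtained \emph{while preserving conformity and the commuting diagram}: one cannot merely invoke unconstrained $L^{2}$ best polynomial approximation, since the interpolants are fixed by degrees of freedom that must match across inter-element boundaries. This is exactly where the mapping and stability properties of the projection based interpolation operators of \cite{DEM06,DEB04}, or the classical $p$-approximation estimates for the canonical interpolants of these FEEC families, enter, controlling the operators $\Pi^{k}_{p}$ as $p\to\infty$ on each fixed simplex.
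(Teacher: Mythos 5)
Your overall strategy is workable in the dimensions that matter for the paper's applications, but it is genuinely different from, and much heavier than, the paper's own argument. The paper never touches interpolation operators for this lemma: it observes that $\ell$-forms whose coefficients are degree-$p$ Lagrangian finite element functions vanishing on $\partial\Dom$ form (up to an irrelevant shift of one in the polynomial degree for the family \eqref{eq:ned1}) a subspace of $\zbWpl{\ell}{p}$ --- componentwise $C^{0}$ interelement continuity being \emph{stronger} than the trace continuity required for $\FHd{\ell}$-conformity --- and that the classical $p$-density of Lagrangian elements in $\zbHone$, applied coefficient by coefficient, then yields asymptotic density in $\zbFHm{\ell}{1}$ in the $H^{1}$-norm, which dominates the graph norm; finally $\zbFHm{\ell}{1}$ is dense in $\zbFHd{\ell}$ because it already contains $\FCcinf{\ell}$. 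So your remark that one ``cannot merely invoke unconstrained $L^{2}$ best polynomial approximation'' because of conformity is true as far as it goes, but it overlooks the elementary loophole the paper exploits: componentwise $H^{1}$-conforming (scalar Lagrangian) approximation is available, automatically conforming, and requires no commuting-diagram apparatus whatsoever.

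The reason this difference matters is generality, and here your proposal has a real shortfall. Lemma \ref{lem:Vpdense} is stated for simplicial meshes in $\bbR^{d}$ with arbitrary $d$, and the paper's coefficientwise argument is dimension-independent. The crux of your proof --- $p$-version convergence of commuting interpolation operators on smooth forms --- is precisely the ingredient the paper says is unavailable in general: as emphasized in Sect.~\ref{sec:discr-diff-forms}, $p$-version error estimates for projection-based interpolants exist in the literature only for $d=2,3$, and for the classical moment-based canonical interpolants no stability or error bounds in $p$ are known at all. Your route therefore imports the deepest machinery of the theory (Assumptions \ref{ass:lp}--\ref{ass:locapprox}) into what is meant to be an elementary completeness property, and it establishes the lemma only for $d\leq 3$; within that range the argument is correct, but as a proof of the lemma as stated it is incomplete.
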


\begin{proof}
  It is a classical result of finite element theory that the spaces
  of degree $p$ Lagrangian finite element functions $\zbWpl{0}{p}$
  are asymptotically dense in $\zbHm{1}$. Thus the space of 
  polynomials $\ell$-forms with coefficients in $\zbWpl{0}{p}$, which is
  a subspace of $\zbWpl{\ell}{p}$, is asymptotically dense in $\zbFHm{\ell}{1}$. The latter
  space is obviously dense in $\zbFHd{\ell}$, since this is already true
  for $C^{\infty}_{0}(\Omega,\Gamma^{l})$.
\end{proof}

\subsection{Tensor product meshes}
\label{sec:tens-prod-mesh}

Let $\mesh$ be a conforming finite element mesh of the Lipschitz polyhedron $\Omega$ whose cells are affine
images of the unit hypercube $\widehat{K}$ in $\bbR^{d}$: for $K\in\mesh$ the we
write $\Phibf_{K}:\widehat{K}\mapsto K$ for the associated unique affine mapping.  We
generalize the construction of \cite{NED80}: on the cube we define (with notations introduced in Section \ref{sec:polynomials-and-Poinc})
\begin{gather*}
  \Wpl[\widehat{K}]{\ell}{p} := \Big\{
  \widehat{\bfv} = \sum\limits_{I} u_{I}\mathsf{d}x_{I},\,
  u_{I}(x) = \prod\limits_{j=1}^{d}u_{I,j}(x_{j}),\,
  u_{I,j}\in 
  \begin{cases}
    \Pol_{p-1} \!\!\! & \text{if } j\in I\\
    \Pol_{p} & \text{if } j\not\in I
  \end{cases}
  \ \Big\}\;.
\end{gather*}
The local spaces are obtained by affine pullback
\begin{gather}
  \label{eq:wpq}
  \Wpl[K]{\ell}{p} := \bigl(\Phibf_{K}^{-1}\bigr)^{\ast} \Wpl[\widehat{K}]{\ell}{p}\;.
\end{gather}
This affine tensor product construction also complies with Assumption \ref{ass:K} and
Assumption \ref{ass:poly-Poinc}. Completely parallel to Lemma~\ref{lem:Vpdense}, one
proves the following result.

\begin{lemma}
  \label{lem:Qpdense}
  The requirement \eqref{eq:CAS} is satisfied for the spaces $\zbWpl{p}{\ell}$
  spawned by \eqref{eq:wpq}. 
\end{lemma}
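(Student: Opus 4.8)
The plan is to reproduce the three-step scheme of Lemma~\ref{lem:Vpdense}, transferring the problem to a scalar $H^1$-approximation statement for the underlying mapped Lagrange elements. First I would recall the classical $p$-version approximation result: on the fixed mesh $\mesh$, the scalar spaces $\zbWpl{0}{p}$, consisting of the mapped $Q_p$ Lagrange finite element functions with vanishing trace, are asymptotically dense in $\zbHone$. This is the tensor-product analogue of the fact quoted in the simplicial case.

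Next I would produce, from these scalar functions, an approximating family inside $\zbWpl{\ell}{p}$. Expressing a target form in global Cartesian coordinates as $\bfv = \sum_I u_I\,\mathsf{d}x_I$, I would replace each coefficient by a scalar function $u_I^{(p)}\in\zbWpl{0}{p-1}$ and set $\bfv_p := \sum_I u_I^{(p)}\,\mathsf{d}x_I$. Its coefficients are globally continuous and vanish on $\partial\Omega$, so $\bfv_p\in\zbFHd{\ell}$ with zero trace; and because $\extd[\ell]$ differentiates the coefficients only once, approximating each $u_I$ in the $H^1$-norm controls $\NFLtwo{\ell}{\bfv-\bfv_p}$ and $\NFLtwo{\ell+1}{\extd[\ell](\bfv-\bfv_p)}$ simultaneously, hence $\NFHd{\ell}{\bfv-\bfv_p}$. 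Since $\zbWpl{0}{p-1}$ is dense in $\zbHone$, this yields density of such forms in $\zbFHm{\ell}{1}$, which is in turn dense in $\zbFHd{\ell}$ because it contains the dense subspace $\FCcinf{\ell}$.

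The one point that genuinely departs from the simplicial argument — and the step I expect to demand the most care — is the local containment $\bfv_p\on{K}\in\Wpl[K]{\ell}{p}$. In the simplicial case the total-degree spaces $\FPol{p}{\ell}$ are affine invariant, whereas here the affine pullback $\Phibf_K^{\ast}$ both fails to preserve the coordinate-wise spaces $Q_p$ and mixes the form components, so one must track the anisotropic degree constraints defining $\Wpl[\widehat{K}]{\ell}{p}$ explicitly. I would argue as follows: since $\Phibf_K$ is affine, $\Phibf_K^{\ast}(\mathsf{d}x_I)$ is a constant-coefficient combination of the reference basis forms $\mathsf{d}\hat x_J$, while $u_I^{(p)}\on{K}\circ\Phibf_K\in Q_{p-1}(\widehat{K})$ holds by the very definition of the mapped Lagrange element. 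Hence every coefficient of $\Phibf_K^{\ast}(\bfv_p\on{K})$ relative to $\mathsf{d}\hat x_J$ is a tensor-product polynomial of coordinate degree $\le p-1$, so each factor $u_{J,j}$ satisfies both branches of the case distinction defining $\Wpl[\widehat{K}]{\ell}{p}$ (degree $\le p-1$ when $j\in J$ and $\le p$ when $j\notin J$), giving $\bfv_p\on{K}\in\Wpl[K]{\ell}{p}$. Using degree $p-1$ rather than $p$ for the coefficients is harmless, since $\zbWpl{0}{p-1}$ remains asymptotically dense in $\zbHone$.
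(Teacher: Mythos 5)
Your proof is correct and takes essentially the same route as the paper's: the paper proves this lemma simply by declaring it ``completely parallel'' to Lemma~\ref{lem:Vpdense}, whose three-step scheme (scalar $p$-version density in $\zbHone$, coefficient-wise approximation yielding a subspace of $\zbWpl{\ell}{p}$, and density of compactly supported smooth forms in $\zbFHd{\ell}$) is exactly what you reproduce. Your explicit verification of the local containment under affine pullback --- lowering the coefficient degree to $p-1$ so that the anisotropic degree constraints defining $\Wpl[\widehat{K}]{\ell}{p}$ are satisfied in both branches --- is precisely the bookkeeping the paper leaves implicit, and you handle it correctly.
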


  \begin{remark}
    For all the above meshes the cells are affine images of a single reference cell,
    the ``unit simplex'' or ``unit hypercube''. We could allow some non-affine cells:
    Under the assumption that the transformations are ``nearly affine'', see
    \cite[\S 4.3]{CIA78}, and the projection operators $\pi^{\ell}_{p,K}$ are 
     defined correspondingly,
    all crucial estimates like Lemma~\ref{lem:1} can be
    transferred to the reference cell using the pullback of differential forms.
  \end{remark}

\section{Application in dimensions two and three}
\label{sec:discr-diff-forms}

We adopt the discrete spaces from Sect.~\ref{sec:discr-diff-forms-2} along with the
regularized Poincar\'e lifting from Sect.~\ref{sec:smooth-poic-lift}. We rely on the
choice \eqref{eq:XS} for spaces $X$ and $S$, with a regularity exponent $r\in(0,1]$
which has to be chosen suitably.

In order to establish the discrete compactness property from Definition \ref{def:dc},
it remains to verify the regularity Assumption \ref{ass:reg} and Assumptions \ref{ass:lp},
\ref{ass:cdp}, and \ref{ass:locapprox} for convenient local projectors
$\LPO[K]{\ell}{p}$.

Local projectors which make the discrete diagram of Assumption \ref{ass:cdp} commute
do exist in the general framework of differential forms of any degree. They generalize
N\'ed\'elec edge element projections and can be referred to as \emph{moment based}
projection operators. They are suitable for the $h$-version of finite elements in
dimensions 2 and 3. In higher dimensions some of them (for low degree forms)
require a higher regularity than $H^2$ to be defined. In \cite{AFW06,AFW09},
they are modified by an extension-regularization procedure in order to be defined and
bounded on $L^2$. However, such operators cannot be used for the $p$ version of
finite elements, because no estimates (stability or error bounds) are known with
respect to the polynomial degree $p$.

The proper projection operators for $p$-version approximation are so-called
\emph{projection based interpolation operators}, see
\cite{DEB01,DEB04,CAD05,DEM06,DEK07}. Variants for any $\ell$ and $d$ are available
and they are designed to commute in the sense of Assumption \ref{ass:cdp}
\cite[Sect.~3.5]{HIP02}.

At this point we have to abandon the framework of general $\ell$ and $d$,
because both regularity results (Assumption \ref{ass:reg}) and the analysis of
projection operators (Assumption \ref{ass:locapprox}) are not presently available for
general $\ell$ and $d$. We have to discuss them for special choices of $\ell$ and $d$
separately, relying on a wide array of sophisticated results from the literature.

\begin{theorem}{\rm (Convergence of Galerkin approximations)}
  \label{thm:mainMax}
  For $d=2$, $3$, and $0\leq \ell<d$, the Galerkin discretization of \eqref{eq:evp}
  on a Lipschitz polyhedron based on any of the families of discrete
  differential forms introduced in Sect.~\emph{\ref{sec:discr-diff-forms-2}} offers a
  spectrally correct, spurious-free approximation.
\end{theorem}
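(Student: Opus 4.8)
The plan is to verify that every abstract assumption of Theorem~\ref{thm:main}, together with the extra hypotheses of Corollary~\ref{cor:approx-evp}, is met by the concrete spaces of Section~\ref{sec:discr-diff-forms-2} once we make the choice~\eqref{eq:XS} with a suitable regularity exponent $r\in(0,1]$. The logical backbone is already in place: by the implication summarized at the end of Section~\ref{sec:smooth-poic-lift}, Assumptions~\ref{ass:K} and \ref{ass:poly-Poinc} (which hold by construction for the families \eqref{eq:ned1}, \eqref{eq:ned2}, and \eqref{eq:wpq}) together with \eqref{eq:XS} already deliver Assumptions~\ref{ass:compemb}, \ref{ass:Smax}, \ref{ass:lift}, and \ref{ass:Wlift}. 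Lemma~\ref{lem:Vpdense} and Lemma~\ref{lem:Qpdense} supply property~(CAS)~\eqref{eq:CAS}, and Lemma~\ref{lem:dense} supplies the density of $\HX{\ell}\cap\zbkFHd{\ell}$ in $\zbkFHd{\ell}$. Thus, invoking Theorem~\ref{thm:spurfree} through Corollary~\ref{cor:approx-evp}, the only assumptions left to establish are the regularity Assumption~\ref{ass:reg} and the projector Assumptions~\ref{ass:lp}, \ref{ass:cdp}, and \ref{ass:locapprox}.

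For Assumption~\ref{ass:reg} I would argue case by case in $d\in\{2,3\}$ and $\ell\in\{1,\ldots,d-1\}$, since this is exactly where the theory must leave the fully general setting. The point is to produce, for each relevant pair $(d,\ell)$, a genuine regularity estimate showing that the solenoidal fields in $\zbFXd{\ell}$ enjoy $H^r$-smoothness for some $r>0$ with the norm bound~\eqref{eq:reg}. On a Lipschitz polyhedron such fractional regularity results for the relevant spaces (the space of divergence-free fields in $\zbHcurl$, etc.) are classical and available in the literature; they fix the admissible value of $r$ in~\eqref{eq:XS}. Here the embeddings in Table~\ref{tb:proxies} translate the differential-form statement into the familiar vector-proxy spaces, so that for $d=3,\ell=1$ we recover the statement that the divergence-free part of $\zbHcurl$ lies in $\HHm{r}$, and analogously in the other cases.

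The projector assumptions are handled by choosing $\LPO[K]{\ell}{p}$ and $\LPO[K]{\ell-1}{p}$ to be the projection based interpolation operators of \cite{DEB01,DEB04,CAD05,DEM06,DEK07}. By design these operators are local, are continuous on the spaces $\HS[K]{\cdot}$ selected in~\eqref{eq:XS} (this is Assumption~\ref{ass:lp}), respect traces so that the conformity condition~\eqref{eq:LPO} holds, and commute with the exterior derivative, which is precisely Assumption~\ref{ass:cdp}, see \cite[Sect.~3.5]{HIP02}. Assumption~\ref{ass:locapprox} is the quantitative input: one needs the $p$-explicit error bound $\NFLtwo[K]{\ell}{\extd[\ell-1](\phibf-\LPO{\ell-1}{p}\phibf)}\le\varepsilon_{\ell-1}(p)\NHS[K]{\ell-1}{\phibf}$ with $\varepsilon_{\ell-1}(p)\to 0$. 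Invoking the commuting diagram, the left-hand side equals $\NFLtwo[K]{\ell}{(\Op{Id}-\LPO{\ell}{p})\extd[\ell-1]\phibf}$, so this reduces to a $p$-version interpolation estimate for the $\ell$-form projector applied to an element of $\HS[K]{\ell}=\FHdr[K]{\ell}{r}$, i.e.\ to a fractional-order approximation result in $p$ for projection based interpolation on the reference cell.

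The main obstacle will be precisely this last point: obtaining the $p$-convergence $\varepsilon_{\ell-1}(p)\to 0$ of the projection based interpolation error measured in $\FLtwo{}$ against only the low fractional $H^r$-regularity available from Assumption~\ref{ass:reg}. Unlike the $h$-version, no scaling is available, so one must rely on genuinely $p$-explicit interpolation estimates for these operators on a fixed cell, which are known only in dimensions two and three and only for the specific families at hand — this is exactly why the statement is restricted to $d=2,3$. Once these estimates are cited from \cite{DEB04,DEM06,DEK07} for the required degrees and the fractional regularity spaces, all assumptions of Corollary~\ref{cor:approx-evp} are in force and the spectrally correct, spurious-free approximation follows, completing the proof.
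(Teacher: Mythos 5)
Your proposal follows essentially the same route as the paper's own proof: it isolates the remaining Assumptions \Ref{ass:reg}, \Ref{ass:lp}, \Ref{ass:cdp}, \Ref{ass:locapprox} (the others being disposed of by Assumptions \Ref{ass:K}, \Ref{ass:poly-Poinc} and the choice \eqref{eq:XS}), verifies them case by case for $d=2,3$ via polyhedral regularity results and the projection based interpolation operators of \cite{DEB01,DEB04,DEM06}, and then concludes through Lemmas \Ref{lem:Vpdense}, \Ref{lem:Qpdense}, \Ref{lem:dense} and Corollary \Ref{cor:approx-evp}. The paper is more specific only in its citations (in 2D, \cite[Sect.~3.2]{GIR86} for the embedding $\zbHcurltd\cap\kHdiv\subset\HHm{\delta+1/2}$; in 3D, \cite{ABD96} and \cite[Sect.~4.1]{HIP02}; and $\varepsilon_{\ell-1}(p)=Cp^{-1/2}$ from \cite[Thms.~4.3, 5.3]{DEM06} and \cite[Thm.~4.1]{BEH09a}), and your reduction of Assumption \Ref{ass:locapprox} via the commuting diagram to an $L^2$ interpolation estimate for exact $H^r$-forms is a correct reformulation of what those references provide.

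There is, however, one genuine (if small) gap: the statement covers $0\leq\ell<d$, but your argument explicitly restricts to $\ell\in\{1,\ldots,d-1\}$, and the entire abstract machinery you invoke --- the discrete compactness property of Definition \Ref{def:dc}, Theorem \Ref{thm:main}, and Corollary \Ref{cor:approx-evp} --- is formulated only for $\ell\geq 1$ (for $\ell=0$ there is no space of potentials $\zbFHd{\ell-1}$ and no infinite-dimensional kernel). So as written your proof does not cover $\ell=0$. This case cannot be folded into the framework; it must be dispatched separately, which the paper does in one sentence: for $\ell=0$ the eigenvalue problem \eqref{eq:evp} has compact resolvent, so the classical Galerkin spectral approximation theory for such operators applies directly (see \cite{KNO06a}), with the asymptotic density of the discrete spaces (Lemmas \Ref{lem:Vpdense}, \Ref{lem:Qpdense}) as the only requirement. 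Adding that remark closes the gap and makes your proof complete.
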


\begin{proof}
  We skip the case $\ell=0$, for which the standard Galerkin approximation theory
  for operators with compact resolvent can be applied, see \cite{KNO06a}.
  
  To begin with, we focus on the discrete compactness property and verify the
  assumptions \ref{ass:reg}, \ref{ass:lp}, \ref{ass:cdp}, and \ref{ass:locapprox}
  for $d=2$ and $d=3$ separately. 

\smallskip\noindent$\bullet$ \  
$d=2$, $\ell=1$: in terms of vector proxies we find the correspondence
  \begin{gather}
    \label{eq:Yd2l1}
    \zbFXd{1} \sim \zbHcurltd \cap \kHdiv\;. 
  \end{gather}
  Regularity theorems for boundary value problems for $-\Delta$ on the polygon confirm
  the existence of $\delta=\delta(\Omega)>0$ such that
  \begin{gather}
    \label{eq:regYd2l1}
    \zbHcurltd \cap \kHdiv \subset \HHm{\delta+1/2}\;,
  \end{gather}
  in the sense of continuous embedding, see \cite[Sect.~3.2]{GIR86}. This suggests to
  choose $r=\delta+1/2$ in \eqref{eq:XS} and Assumption \ref{ass:reg} will hold
  true. Hence, we deal with the concrete spaces
  \begin{align}
    \label{hx:1}
    \HX{1} & = \zbHcurltd \cap \prod\limits_{K\in\mesh} 
    (\Hm[K]{\delta+1/2})^{2}\;,\\
    \label{hs:2}
    \HS{0} & = \zbHm{1} \cap \prod\limits_{K\in\mesh} 
    \Hm[K]{\delta+3/2}\;.
  \end{align}
  Commuting local projection based interpolation operators $\LPO[K]{1}{p}$ and
  $\LPO[K]{0}{p}$ have been proposed for triangles and for
  quadrilaterals in \cite{DEB01}. With the choice \eqref{hx:1} and \eqref{hs:2} they
  live up to Assumptions \ref{ass:lp} and \ref{ass:cdp}. Assumption
  \ref{ass:locapprox} holds with $\varepsilon_{0}(p) = C{p}^{-1/2}$ and $C>0$
  depending only on the shape-regularity of the cells, \textit{cf.} \cite[Thm.~4.3]{DEM06} and 
  \cite[Thm.~4.1]{BEH09a}. Finally, these interpolation operators satisfy the 
  natural condition of conformity \eqref{eq:LPO} by construction, which makes
  they meet all our requirements, \textit{cf.} Sect.~\ref{sec:local-projectors}.

\smallskip\noindent$\bullet$ \  
  $d=3$, $\ell=1,2$: we have the vector proxy incarnation
   \begin{gather}
     \label{eq:Yd3}
     \zbFXd{\ell} \sim 
     \begin{cases}
       \zbHcurl \cap \kHdiv & \text{for } \ell =1\;,\\
       \zbHdiv \cap \kHcurl & \text{for } \ell =2\;.
     \end{cases}
   \end{gather}
   Citing results from \cite{ABD96} and \cite[Sect.~4.1]{HIP02}, we find
   $\delta=\delta(\Omega)\in ]0,\frac{1}{2}]$ and continuous embeddings
  \begin{equation}
    \label{eq:regXmw}
    \zbHcurl \cap \Hdiv, \; \zbHdiv \cap \Hcurl \subset \HHm{\delta+1/2}\;.
  \end{equation}
  Therefore, using the construction \eqref{eq:XS} with $r=\delta+1/2$,
  Assumption \ref{ass:reg} is satisfied for $\ell\in\{1,2\}$. The relevant
  spaces of more regular forms now read
  \begin{align}
    \label{hxd3:1}
    \HX{1} & = \zbHcurl \cap \prod\limits_{K\in\mesh} 
    (\Hm[K]{\delta+1/2})^{3}\;,\\
    \label{hxd3l2:1}
    \HX{2} & = \zbHdiv \cap \prod\limits_{K\in\mesh} 
    (\Hm[K]{\delta+1/2})^{3}\;,\\
    \label{hsd3:2}
    \HS{0} & = \zbHm{1} \cap \prod\limits_{K\in\mesh} 
    \Hm[K]{\delta+3/2}\;,\\
    \label{hsd3l2:2}
    \HS{1} & = \zbHcurl \cap \prod\limits_{K\in\mesh} 
    \Hcurlm[K]{\delta+1/2}\;.
  \end{align}

  The essential commuting local projection based interpolation operators
  $\LPO[K]{m}{p}$, $m=0,1,2$, have been introduced in \cite{DEB04} for tetrahedral
  meshes and in \cite{DEB01} for meshes comprising parallelepipeds. By construction they
  comply with Assumption \ref{ass:cdp}. Assumption \ref{ass:lp} for the spaces 
  $\HS{0}$ and $\HS{1}$ from \eqref{hsd3:2} and \eqref{hsd3l2:2}, respectively,
  and $r=\delta+1/2$ is a consequence of Sobolev embedding
  theorems. Relying on \cite[Th.5.3]{DEM06} we obtain like in the 2D case that in
  Lemma~\ref{lem:1} we can
  take $\varepsilon_{m}(p) = Cp^{-1/2}$ for $m=0$ and $m=1$.
  
\smallskip\noindent$\bullet$ \  
  Finally, we appeal to Lemmas~\ref{lem:Vpdense}, \ref{lem:Qpdense} together with
  Lemma~\ref{lem:dense} and apply the abstract theory of
  Sect.~\ref{sec:generic-assumptions} in the form of Corollary~\ref{cor:approx-evp} to conclude the proof of the theorem.
\end{proof}  

\begin{corollary}{\rm (Approximation of the Maxwell eigenvalue problem)}
\label{cor:mainMax}
 The $p$ version finite element discretization of the Maxwell eigenvalue problem \eqref{eq:maxevp} based on 
 edge elements from the first or second N\'ed\'elec family on triangles or on tetrahedra, or from the first N\'ed\'elec family on parallelograms or on parallelepipeds 
 offers a spectrally correct, spurious-free approximation.
\end{corollary}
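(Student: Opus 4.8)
The plan is to obtain the corollary as the specialization $\ell=1$ of Theorem~\ref{thm:mainMax}, once the classical edge element families have been matched with the discrete differential forms of Section~\ref{sec:discr-diff-forms-2}. First I would invoke the vector proxy dictionary of Table~\ref{tb:proxies}: for $\ell=1$ the space $\zbFHd{1}$ becomes $\zbHcurltd$ in dimension $d=2$ and $\zbHcurl$ in dimension $d=3$, while the exterior derivative $\extd[1]$ is incarnated by the scalar $\curltd$ (in 2D) resp. the vector $\curl$ (in 3D). Under this identification the abstract eigenvalue problem \eqref{eq:evp} with $\ell=1$ is literally the Maxwell eigenvalue problem \eqref{eq:maxevp} (for $d=3$) together with its two-dimensional analogue, so that ``spectrally correct, spurious-free approximation'' of \eqref{eq:evp} and of \eqref{eq:maxevp} are one and the same assertion.

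The second step is to recognize the three families of edge elements as instances of the local spaces defined in Section~\ref{sec:discr-diff-forms-2}, specialized to $\ell=1$. The first-kind N\'ed\'elec spaces on triangles and tetrahedra are exactly the vector proxies of \eqref{eq:ned1} with $\ell=1$, namely $\FPol{p-1}{1}\on{K} + x\contr\FPol{p-1}{2}\on{K}$; the second-kind N\'ed\'elec spaces on the same cells are the vector proxies of the full polynomial space \eqref{eq:ned2} with $\ell=1$; and the first-kind N\'ed\'elec spaces on parallelograms and parallelepipeds are the affine tensor product spaces \eqref{eq:wpq} with $\ell=1$. These identifications are standard (see \cite{AFW06,HIP02}) and amount to comparing the textbook definitions of the edge elements with the coordinate expressions of the corresponding $1$-forms.

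With these two translations in hand, the corollary follows by applying Theorem~\ref{thm:mainMax} in dimension $d=2$ (triangles, parallelograms) and $d=3$ (tetrahedra, parallelepipeds) at the degree $\ell=1$, which is admissible since $0<1<d$ in both cases. The only point that requires genuine care -- rather than mere bookkeeping -- is verifying the coincidence of the proxy spaces with the classical edge element spaces, because the hypotheses exploited in Theorem~\ref{thm:mainMax} (the exact sequence property, the projection based interpolation operators of \cite{DEB01,DEB04}, and the conformity condition \eqref{eq:LPO}) are all phrased for the differential form spaces. Once the spaces are matched, every assumption of Theorem~\ref{thm:mainMax} has already been discharged there, and nothing further needs to be proved.
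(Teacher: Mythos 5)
Your proposal is correct and matches the paper's (implicit) argument: the paper states Corollary~\ref{cor:mainMax} without a separate proof, treating it as the immediate specialization of Theorem~\ref{thm:mainMax} to $\ell=1$ in dimensions $d=2,3$, with the N\'ed\'elec families identified, via vector proxies, with the discrete $1$-form spaces \eqref{eq:ned1}, \eqref{eq:ned2} and \eqref{eq:wpq} --- exactly the two translations you carry out. Your added care in spelling out these identifications is sound bookkeeping, not a deviation from the paper's route.
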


\begin{remark}
\label{rem:epsilonmu}
Instead of \eqref{eq:maxevp} we may consider the variational formulation of the more general Maxwell eigenvalue problem \eqref{eq:maxwell}, corresponding to the case of anisotropic inhomogeneous material:
\begin{gather}
  \label{eq:maxevpepsmu}
\aligned
   &\mbox{Seek $\bfu\in\zbHcurl\setminus\{0\}$, \ $\omega\in\R^+_0$ \ such that}\\
   &\SPLtwo{\mu^{-1}\curl\bfu}{\curl\bfv} = 
   \omega^{2}\SPLtwo{\epsilon\bfu}{\bfv}\ \forall
   \bfv\in\zbHcurl\;,
\endaligned
\end{gather}
with uniformly positive material tensors $\mu=\mu(\bfx)$,
$\epsilon=\epsilon(\bfx)$. The same edge element discretizations listed in Corollary~\ref{cor:mainMax}  provide spectrally correct, spurious-free approximations of this problem.
This generalization of Corollary~\ref{cor:mainMax} can be achieved with standard tools (see, in particular, Propositions~2.25,
  2.26, and~2.27 of~\cite{CFR00}, and \cite[Sect.~6]{HIP08t}, \cite[Thm.~4.9]{HIP02}).
\end{remark}

\begin{remark}
The restriction on the families of elements mentioned in the Corollary is essentially due to the availability of published results about suitable interpolation operators. Thus, for example, as soon as a generalization of the $p$ version error estimates of \cite{DEB04,DEM06} for projection-based interpolants to meshes containing prismatic or more general polyhedral elements becomes available, our result about the approximation of the Maxwell eigenvalue problem will apply to such meshes, too.    
\end{remark}

  \begin{remark}
    Several obstacles prevent us from establishing the assumptions of the abstract
    theory for $d>3$. On the one hand, continuity properties of projection based
    interpolation operators have not been investigated for $d>3$. Also, regularity
    results along the lines of \eqref{eq:regXmw} are have not been published for
    polyhedra in higher dimensions.

    On the other hand, the innocuously looking requirement \eqref{eq:LPO} for the
    projection operators --- corresponding to the requirement that the global projection operators are constructed elementwise from local degrees of freedom ---
    entails that the trace of forms in $S(K,\Lambda^{\ell-1})$
    onto $\ell-1$-dimensional facets in $\Faces[K]{\ell-1}$ must make sense. However, we
    cannot expect more than $H^2$ regularity for the space $\HS[K]{\ell-1}$. Hence, by
    trace theorems for Sobolev spaces, the spaces $\HS[K]{\ell-1}$ allow for
    traces on $m$-facets for $m>\frac{d}{2}-2$ at best, which means that $\ell
    > \frac{d}{2}-1$ is required to allow for the construction of a local
    projection based interpolation complying with Assumptions~\ref{ass:lp}
    and~\ref{ass:locapprox}.

    Perhaps, an analysis in $L^{p}$-spaces as in \cite[Lemma~4.7]{ABD96} can make 
    possible an extension of the theory to higher dimensions, but this is beyond
    the scope of the present article. 
  \end{remark}

\begin{remark}
Our approach does not cover $hp$-refinement, for various reasons. 
One reason is that there exist many variants of $hp$ refinements in 3D, and covering them would in any case require a much longer paper than the present one.

Another reason is technical:
The existing convergence proof of the $hp$ approximation of the Maxwell eigenvalue problem in \cite{BCD06} --- while also based on the proof of the discrete compactness property --- uses a different technical tool, namely an estimate of the $L^2$ stability of a certain projection operator. This kind of estimate is currently only available for intervals in 1D and for rectangles in 2D. 

The technique used in the present paper is based on the regularized Poincar\'e lifting, and adjusting this 
to variable polynomial degree poses formidable technical
  challenges. Only in 2D these could be mastered so far, as was demonstrated in \cite{BHH09}
  in the context of boundary element analysis.
\end{remark}


\section{Conclusion}
\label{sec:conclusion}

In this paper we have proved that the $p$-version of finite elements based on generalized N\'ed\'elec edge elements provides a spurious-free spectrally correct approximation of the Maxwell eigenvalue problem. The essential point was the proof of the discrete compactness property. We showed that this property follows from a set of rather natural assumptions about the family of finite element spaces and interpolation operators, and in addition we showed that these assumptions are implied by recently found results on lifting operators and on projection-based interpolants.

In the approach pursued in \cite{AFW06,AFW09} the discrete compactness property is
not addressed directly: in the framework of the $h$-version for differential forms,
modified moment-based projection operators are used. These new operators satisfy the
strong property of being uniformly bounded in $L^2$ and are constructed by means of a
delicate extension-regularization procedure, see also \cite{ChrWi08,Chr09}.

On the one hand this uniform boundedness property is stronger than our assumption
\ref{ass:lp} and replaces in a certain way the discrete compactness property. But on
the other hand, it is currently not known whether a construction of projection
operators by extension-regularization could also be employed in the case of the
$p$-version of finite elements, or whether the construction of a $p$-uniformly
$L^2$-bounded family of cochain projections is even possible.



\begin{thebibliography}{10}

\bibitem{ABD96}
{\sc C.~Amrouche, C.~Bernardi, M.~Dauge, and V.~Girault}, {\em Vector
  potentials in three--dimensional nonsmooth domains}, Math. Meth. Appl. Sci.,
  21 (1998), pp.~823--864.

\bibitem{ANS71}
{\sc P.~Anselone}, {\em Collectively Compact Operator Approximation Theory and
  Applications to Integral Equations}, Prentice-Hall, Eaglewood Cliffs, NJ,
  1971.

\bibitem{Arnold02}
{\sc D.~Arnold}, {\em Differential complexes and numerical stability}, in
  Proceedings of the {I}nternational {C}ongress of {M}athematicians, {V}ol. {I}
  ({B}eijing, 2002), Beijing, 2002, Higher Ed. Press, pp.~137--157.

\bibitem{AFW06}
{\sc D.~Arnold, R.~Falk, and R.~Winther}, {\em Finite element exterior
  calculus, homological techniques, and applications}, Acta Numerica, 15
  (2006), pp.~1--155.

\bibitem{AFW09}
\leavevmode\vrule height 2pt depth -1.6pt width 23pt, {\em Finite element
  exterior calculus: from {H}odge theory to numerical stability}, Bull. Amer.
  Math. Soc., 47 (2010), pp.~281--354.

\bibitem{BEH09a}
{\sc A.~Bespalov and N.~Heuer}, {\em Optimal error estimation for
  {$H(curl)$}-conforming p-interpolation in two dimensions}, Preprint
  0903.4453v1, arXiv, 2009.
\newblock http://arxiv.org/abs/0903.4453v1.

\bibitem{BHH09}
{\sc A.~Bespalov, N.~Heuer, and R.~Hiptmair}, {\em Convergence of natural
  $hp$-{BEM} for the electric field integral equation on polyhedral surfaces},
  SIAM J. Numer. Anal., 48 (2010), pp.~1518--1529.

\bibitem{BOF99}
{\sc D.~Boffi}, {\em {F}ortin operator and discrete compactness for edge
  elements}, Numer. Math., 87 (2000), pp.~229--246.

\bibitem{BO07}
\leavevmode\vrule height 2pt depth -1.6pt width 23pt, {\em Approximation of
  eigenvalues in mixed form, discrete compactness property, and application to
  $hp$ mixed finite elements}, Comput. Meth. Appl. Mech. Engr., 196 (2007),
  pp.~3672--3681.

\bibitem{BOF10}
\leavevmode\vrule height 2pt depth -1.6pt width 23pt, {\em Finite element
  approximation of eigenvalue problems}, Acta Numerica, 19 (2010), pp.~1--120.

\bibitem{BCD06}
{\sc D.~Boffi, M.~Costabel, M.~Dauge, and L.~Demkowicz}, {\em Discrete
  compactness for the $hp$ version of rectangular edge finite elements}, SIAM
  J. Numer. Anal., 44 (2006), pp.~979--1004.

\bibitem{BDC03}
{\sc D.~Boffi, L.~Demkowicz, and M.~Costabel}, {\em Discrete compactness for
  $p$ and $hp$ {2D} edge finite elements}, Math. Models Methods Appl. Sci., 13
  (2003), pp.~1673--1687.

\bibitem{BFG97}
{\sc D.~Boffi, P.~Fernandes, L.~Gastaldi, and I.~Perugia}, {\em Computational
  models of electromagnetic resonators: {A}nalysis of edge element
  approximation}, SIAM J. Numer. Anal., 36 (1999), pp.~1264--1290.

\bibitem{BOS88a}
{\sc A.~Bossavit}, {\em Whitney forms: {A} class of finite elements for
  three-dimensional computations in electromagnetism}, IEE Proc. A, 135 (1988),
  pp.~493--500.

\bibitem{BOS05c}
\leavevmode\vrule height 2pt depth -1.6pt width 23pt, {\em Discretization of
  electromagnetic problems: {T}he ``generalized finite differences''}, in
  Numerical Methods in Electromagnetics, W.~Schilders and W.~ter Maten, eds.,
  vol.~XIII of Handbook of numerical analysis, Elsevier, Amsterdam, 2005,
  pp.~443--522.

\bibitem{BUF05}
{\sc A.~Buffa}, {\em Remarks on the discretization of some non-positive
  operators with application to heterogeneous {M}axwell problems}, SIAM J.
  Numer. Anal., 43 (2005), pp.~1--18.

\bibitem{BuCJ09}
{\sc A.~Buffa, P.~Ciarlet, Jr., and E.~Jamelot}, {\em Solving electromagnetic
  eigenvalue problems in polyhedral domains with nodal finite elements}, Numer.
  Math., 113 (2009), pp.~497--518.

\bibitem{CAD05}
{\sc W.~Cao and L.~Demkowicz}, {\em Optimal error estimate of a projection
  based interpolation for the $p$-version approximation in three dimensions},
  Comput. Math. Appl., 50 (2005), pp.~359--366.

\bibitem{CFR00}
{\sc S.~Caorsi, P.~Fernandes, and M.~Raffetto}, {\em On the convergence of
  {G}alerkin finite element approximations of electromagnetic eigenproblems},
  SIAM J. Numer. Anal., 38 (2000), pp.~580--607.

\bibitem{Chr09}
{\sc S.~H. Christiansen}, {\em Foundations of finite element methods for wave
  equations of {M}axwell type}, in Applied wave mathematics, Springer, Berlin,
  2009, pp.~335--393.

\bibitem{ChrWi08}
{\sc S.~H. Christiansen and R.~Winther}, {\em Smoothed projections in finite
  element exterior calculus}, Math. Comp., 77 (2008), pp.~813--829.

\bibitem{CIA78}
{\sc P.~Ciarlet}, {\em The Finite Element Method for Elliptic Problems}, vol.~4
  of Studies in Mathematics and its Applications, North-Holland, Amsterdam,
  1978.

\bibitem{CostabelDauge02b}
{\sc M.~Costabel and M.~Dauge}, {\em Weighted regularization of {M}axwell
  equations in polyhedral domains}, Numer. Math., 93 (2002), pp.~239--277.

\bibitem{COD03}
\leavevmode\vrule height 2pt depth -1.6pt width 23pt, {\em Computation of
  resonance frequencies for {M}axwell equations in non-smooth domains}, in
  Computational Methods in Wave Propagation, M.~Ainsworth, ed., Springer, New
  York, 2003, pp.~127--164.
\newblock In print.

\bibitem{CostabelDaugeSchwab05}
{\sc M.~Costabel, M.~Dauge, and C.~Schwab}, {\em Exponential convergence of
  {$hp$-FEM} for {Maxwell's} equations with weighted regularization in
  polygonal domains}, Math. Models Methods Appl. Sci., 15 (2005), pp.~575--622.

\bibitem{CMI08}
{\sc M.~Costabel and A.~McIntosh}, {\em On {B}ogovski\u{\i} and regularized
  {P}oincar\'{e} integral operators for de {R}ham complexes on {L}ip\-schitz
  domains}, Math. Z., 265 (2010), pp.~297--320.

\bibitem{DEM06}
{\sc L.~Demkowicz}, {\em Polynomial exact sequences and projection-based
  interpolation with applications to {M}axwell equations}, in Mixed Finite
  Elements, Compatibility Conditions, and Applications, D.~Boffi and
  L.~Gastaldi, eds., vol.~1939 of Lecture Notes in Mathematics, Springer,
  Berlin, 2008, pp.~101--158.

\bibitem{DEB01}
{\sc L.~Demkowicz and I.~Babu\v{s}ka}, {\em $p$ interpolation error estimates
  for edge finite elements of variable order in two dimensions}, SIAM J. Numer.
  Anal., 41 (2003), pp.~1195--1208.

\bibitem{DEB04}
{\sc L.~Demkowicz and A.~Buffa}, {\em {$H^1$}, {$\mathbf{H}(\mathbf{curl})$},
  and {$\mathbf{H}(\mathrm{div})$}-conforming projection-based interpolation in
  three dimensions. {Q}uasi-optimal $p$-interpolation estimates}, Comput. Meth.
  Appl. Mech. Engr., 194 (2005), pp.~267--296.

\bibitem{DEK07}
{\sc L.~Demkowicz and J.~Kurtz}, {\em Projection-based interpolation and
  automatic $hp$-adaptivity for finite element discretizations of elliptic and
  {M}axwell problems.}, in Proceedings of Journ\'ees d'Analyse Fonctionnelle et
  Num\'erique en l'honneur de Michel Crouzeix, vol.~21 of ESAIM Proceedings,
  Les Ulis, 2007, EDP Science, pp.~1--15.

\bibitem{DMS00}
{\sc L.~Demkowicz, P.~Monk, C.~Schwab, and L.~Vardapetyan}, {\em {M}axwell
  eigenvalues and discrete compactness in two dimensions}, Comput. \& Math.
  Appl., 40 (2000), pp.~589--605.

\bibitem{DNR78a}
{\sc J.~Descloux, N.~Nassif, and J.~Rappaz}, {\em On spectral approximation.
  {P}art {I}. {T}he problem of convergence}, RAIRO Anal. Num\'er., 12 (1978),
  pp.~97--112.

\bibitem{DNR78b}
\leavevmode\vrule height 2pt depth -1.6pt width 23pt, {\em On spectral
  approximation. {P}art {II}. {E}rror estimates for the {G}alerkin method},
  RAIRO Anal. Num\'er., 12 (1978), pp.~113--119.

\bibitem{GIR86}
{\sc V.~Girault and P.~Raviart}, {\em Finite element methods for Navier--Stokes
  equations}, Springer, Berlin, 1986.

\bibitem{GD04}
{\sc J.~Gopalakrishnan and L.~Demkowicz}, {\em Quasioptimality of some spectral
  mixed methods}, J. Comput. Appl. Math., 167 (2004), pp.~163--182.

\bibitem{HIP96b}
{\sc R.~Hiptmair}, {\em Canonical construction of finite elements}, Math.
  Comp., 68 (1999), pp.~1325--1346.

\bibitem{HIP02}
\leavevmode\vrule height 2pt depth -1.6pt width 23pt, {\em Finite elements in
  computational electromagnetism}, Acta Numerica, 11 (2002), pp.~237--339.

\bibitem{HIP08t}
\leavevmode\vrule height 2pt depth -1.6pt width 23pt, {\em Discrete compactness
  for $p$-version of tetrahedral edge elements}, Report 2008-31, SAM, ETH
  Zurich, Z\"urich, Switzerland, 2008.
\newblock http://arxiv.org/abs/0901.0761.

\bibitem{KIK89}
{\sc F.~Kikuchi}, {\em On a discrete compactness property for the {N}\'ed\'elec
  finite elements}, J. Fac. Sci., Univ. Tokyo, Sect. I A, 36 (1989),
  pp.~479--490.

\bibitem{KIK01}
\leavevmode\vrule height 2pt depth -1.6pt width 23pt, {\em Theoretical analysis
  of {N}\'ed\'elec's edge elements}, Japan J. Ind. Appl. Math., 18 (2001),
  pp.~321--333.

\bibitem{KNO06a}
{\sc A.~Knyazev and J.~Osborn}, {\em New a priori {FEM} error estimates for
  eigenvalues}, SIAM J. Numer. Anal., 43 (2006), pp.~2647--2667.

\bibitem{MON03}
{\sc P.~Monk}, {\em Finite Element Methods for {M}axwell's Equations},
  Clarendon Press, Oxford, UK, 2003.

\bibitem{MOD99}
{\sc P.~Monk and L.~Demkowicz}, {\em Discrete compactness and the approximation
  of {M}axwell's equations in {$\mathbb{R}^3$}}, Math. Comp., 70 (2001),
  pp.~507--523.
\newblock Published online February 23, 2000.

\bibitem{NED80}
{\sc J.-C. N\'ed\'elec}, {\em Mixed finite elements in {$\mathbb{R}^3$}},
  Numer. Math., 35 (1980), pp.~315--341.

\bibitem{NED86}
\leavevmode\vrule height 2pt depth -1.6pt width 23pt, {\em A new family of
  mixed finite elements in {$\mathbb{R}^3$}}, Numer. Math., 50 (1986),
  pp.~57--81.

\bibitem{PIC84}
{\sc R.~Picard}, {\em An elementary proof for a compact imbedding result in
  generalized electromagnetic theory}, Math. Z., 187 (1984), pp.~151--161.

\bibitem{RAT77}
{\sc P.~A. Raviart and J.~M. Thomas}, {\em A Mixed Finite Element Method for
  Second Order Elliptic Problems}, vol.~606 of Springer Lecture Notes in
  Mathematics, Springer, Ney York, 1977, pp.~292--315.

\bibitem{WIT57}
{\sc H.~Whitney}, {\em Geometric Integration Theory}, Princeton University
  Press, Princeton, 1957.

\end{thebibliography}

\end{document}